\documentclass[11pt]{amsart}

\usepackage{graphicx} 

\usepackage[utf8]{inputenc}
\usepackage[T1]{fontenc}
\usepackage[english]{babel}
\usepackage{amsmath}
\usepackage{amsthm}
\usepackage{amssymb}
\usepackage{mathrsfs}
\usepackage{dsfont}
\usepackage{array}
\usepackage{graphicx} 
\usepackage[left=3cm,right=3cm,top=3cm,bottom=3cm]{geometry}
\usepackage{hyperref}

\usepackage{tikz-cd}
\usetikzlibrary{3d}
\usetikzlibrary{shapes}
\usetikzlibrary[patterns]
\usetikzlibrary{decorations.pathreplacing}
\usepackage{caption}
\usepackage{subcaption}
\usepackage{adjustbox}
 \usepackage{comment}
\usepackage{enumitem}
\setitemize{label=$\circ$}

\usepackage{eucal} 

\usepackage{mlmodern}

\newcommand{\bcd}{\begin{center}\begin{tikzcd}}
\newcommand{\ecd}{\end{tikzcd}\end{center}}

\newcommand{\vir}[1]{[#1]^\mathrm{vir}}

\newcommand{\ev}{\mathrm{ev}}
\newcommand{\pt}{\mathrm{pt}}

\newcommand{\m}[1]{\mfk\left[ #1 \right]}

\newcommand{\Jac}{\mathfrak{Jac}}
\newcommand{\Pic}{\operatorname{Pic}}

\newcommand{\DR}{\operatorname{DR}}
\newcommand{\DDR}{\operatorname{DDR}}

\newcommand{\DRL}{\operatorname{DRL}}
\newcommand{\DDRL}{\operatorname{DDRL}}

\newcommand{\LogPic}{\operatorname{LogPic}}

\newcommand{\Spec}{\operatorname{Spec}}

\newcommand{\Gm}{\mathbb G_m}
\newcommand{\Gmlog}{\mathbb G_{m,\rm{log}}}
\newcommand{\Gmtrop}{\mathbb G_{m,\rm{trop}}}

\newcommand{\gp}{\mathrm{gp}}

\newcommand{\Prim}{\operatorname{Prim}}

\newcommand{\mzero}{{\underline{0}}}

\newcommand{\PP}{\mathbb{P}}
\newcommand{\NN}{\mathbb{N}}
\newcommand{\ZZ}{\mathbb{Z}}
\newcommand{\RR}{\mathbb{R}}
\newcommand{\CC}{\mathbb{C}}

\newcommand{\QQ}{\mathbb{Q}}

\newcommand{\DDD}{\mathscr{D}}

\newcommand{\bfa}{\mathbf{a}}
\newcommand{\bfb}{\mathbf{b}}
\newcommand{\tbfa}{\widetilde{\mathbf{a}}}
\newcommand{\tbfb}{\widetilde{\mathbf{b}}}

\newcommand{\onedelta}{(\frac{1}{\delta},\frac{1}{\delta})}
\newcommand{\oner}{(\frac{1}{r},\frac{1}{r})}

\newcommand{\tS}{\widetilde{S}}
\newcommand{\tSigma}{\widetilde{\Sigma}}

\newcommand{\bfF}{ {\boldsymbol F} }
\newcommand{\bfG}{ {\boldsymbol G} }

\newcommand{\mfk}{\mathfrak{m}}

\renewcommand{\div}{\mathrm{div}}
\newcommand{\Div}{\operatorname{Div}}

\renewcommand{\P}{\mathcal{P}}
\renewcommand{\L}{\mathcal{L}}

\newcommand{\M}{\mathcal{M}}
\newcommand{\R}{\mathcal{R}}
\newcommand{\G}{\mathcal{G}}

\newcommand{\T}{\mathcal{T}}

\newcommand{\C}{\mathcal{C}}

\newcommand{\B}{\mathcal{B}}
\newcommand{\Q}{\mathcal{Q}}
\newcommand{\W}{\mathcal{W}}

\renewcommand{\O}{\mathcal{O}}

\newcommand{\U}{\mathcal{U}}

\newcommand{\Mgnbar}{\bar{\M}_{g,n}}
\newcommand{\Mgasigma}{\bar{\M}_{g,\bfa}^\sigma}

\renewcommand{\geq}{\geqslant}
\renewcommand{\tilde}{\widetilde}
\renewcommand{\bar}{\overline}

\newcommand{\gen}[1]{\langle #1 \rangle}

\newtheorem{theo}{Theorem}[section]
\newtheorem*{theom}{Theorem}
\newtheorem{prop}[theo]{Proposition}
\newtheorem{coro}[theo]{Corollary}
\newtheorem{lem}[theo]{Lemma}
\newtheorem{Notation}[theo]{Notation}

\newtheorem{conj}{Conjecture}[section]

\theoremstyle{definition}
\newtheorem{defi}[theo]{Definition}

\newtheorem{remark}[theo]{Remark}
\newenvironment{rem}[1]{
    \begin{remark}#1}{
    \xqed{\blacklozenge}\end{remark}
}

\theoremstyle{remark}
\newtheorem{mainexample}[theo]{Main Example}

\theoremstyle{remark}
\newtheorem{example}[theo]{Example}
\newenvironment{expl}[1]{
    \begin{example}#1}{
    \xqed{\lozenge}\end{example}
}

\newcommand{\xqed}[1]{
    \leavevmode\unskip\penalty9999 \hbox{}\nobreak\hfill
    \quad\hbox{\ensuremath{#1}}}

\def\floor (#1) at (#2,#3); {
    \node[draw,ellipse, minimum width=1cm, minimum height = 0.6 cm] (#1) at (#2,#3) {$\bullet$} ;
}

\def\ufloor (#1) at (#2,#3) (#4); {
    \node[draw,ellipse, minimum width=1cm, minimum height = 0.6 cm] (#1) at (#2,#3) {\scriptsize #4} ;
}

\def\marked (#1) to (#2) pos=#3 in=#4 out=#5; {
   \draw (#1) to[out=#5,in=#4] node[pos=#3] {$\bullet$} (#2) ;
}

\def\leftedge (#1) to (#2) pos=#3 in=#4 out=#5 w=#6; {
   \draw (#1) to[out=#5,in=#4] node[midway,left] {$#6$} (#2) ;
}
\def\leftmarked (#1) to (#2) pos=#3 in=#4 out=#5 w=#6; {
   \draw (#1) to[out=#5,in=#4] node[pos=#3] {$\bullet$} node[midway,left] {$#6$} (#2) ;
}

\def\wlmarked (#1) to (#2) pos=#3 in=#4 out=#5 w=#6; {
   \draw (#1) to[out=#5,in=#4] node[pos=#3] {$\bullet$} node[midway,left] {$#6$} (#2) ;
}

\def\rightedge (#1) to (#2) pos=#3 in=#4 out=#5 w=#6; {
   \draw (#1) to[out=#5,in=#4] node[midway,right] {$#6$} (#2) ;
}
\def\rightmarked (#1) to (#2) pos=#3 in=#4 out=#5 w=#6; {
   \draw (#1) to[out=#5,in=#4] node[pos=#3] {$\bullet$} node[midway,right] {$#6$} (#2) ;
}

\def\doublemarked (#1) to (#2) pos=#3 in=#4 out=#5; {
   \draw (#1) to[out=#5,in=#4] node[pos=#3] {$\bullet$} (#2) ;
}

\usepackage{todonotes}
\newcommand{\fadd}[1]{\textcolor{orange}{#1}}
\newcommand{\fnote}[1]{\todo[color=blue!50,inline]{{\scriptsize Francesca:} \scriptsize #1}}

\newcommand{\tnote}[1]{\todo[color=red!50, inline]{{\scriptsize Thomas :} \scriptsize #1}}
\newcommand{\tadd}[1]{\textcolor{red}{#1}}

\makeatletter
\def\l@subsection{\@tocline{2}{0pt}{2.5pc}{5pc}{}}
\makeatother

\makeatletter
\renewcommand{\l@section}{\@tocline{1}{0pt}{10pt}{1pc}{\bfseries}}
\makeatother

\title{A correlated refinement of the double double ramification cycle}
\author{Thomas Blomme, Francesca Carocci, Ajith Urundolil Kumaran}

\address{Thomas Blomme, Univ Toulouse, INUC, UT2J, INSA Toulouse, TSE, CNRS, IMT, Toulouse, France.}
\email{thomas.blomme@math.univ-toulouse.fr}

\address{Università di Roma Tor Vergata, Via della Ricerca Scientifica 1, Roma 00133, Italy}
\email{carocci@mat.uniroma2.it}

\address{Massachusetts Institute of Technology, Cambridge MA, United States of America}
\email{ajith270@mit.edu}

\begin{document}

\maketitle

\begin{abstract}
Given a family of semi-stable curves together with two degree $0$ line bundles, the double double ramification cycle  measures the locus where both line bundles are trivial on the fibers. When the two line bundles come equipped with natural roots, we provide a refinement of the DDR-class using the  Weil pairing of the roots. We prove that the refined classes satisfy a multiple cover formula analogous to the one for correlated invariants of projective bundles on elliptic curves proved in \cite{blomme2025multiple}. As a consequence, we prove that log-GW invariants of  toric surfaces can be refined taking into account the position of the points mapped to the toric boundary, and that these refined invariants also satisfy a multiple cover formula; the latter is as a variation of the N. Takahashi conjecture for genus zero maximal contact curves for $\mathbb P^2$ relative a smooth elliptic curve $E.$

\end{abstract}

\setcounter{tocdepth}{2}
\tableofcontents

\section{Introduction}

The moduli space of relative/logarithmic stable maps to a log Calabi-Yau pair $(X,D)$ for $X$ a smooth surface and $D$ an anti-canonical divisor (e.g $(\mathbb P^2, E)$) can be refined by keeping track of a torsion data for the points mapping to the boundary divisors (see Section~\ref{sec:generalizedtakahasi} for details). These kind of refinements have first been considered in \cite{gross2010tropical,bousseau2019takahashi} for the case of genus zero, maximal contact order, and more recently in \cite{blomme2024correlated,blommecarocci2025DR} for any genus and tangency profile  in the case of $\mathbb P^1$-bundles over elliptic curves.

In any of these geometric settings, keeping track of the torsion information allows to compute multiple cover contributions; in particular these refined invariants (named \emph{correlated} Gromov-Witten invariants by the first two authors) satisfy or are conjectured to satisfy \emph{multiple cover formulas} (MCF for short) \cite[Conjectures~6.3]{gross2010tropical}, \cite[Theorem~1.1]{bousseau2019takahashi}, \cite[Theorem~5.15]{blomme2025multiple}.

\medskip

The multiple cover formulas for these torsion-refined invariants might  be thought as a generalization to the log Calabi-Yau surface setting of the MCF conjectured (and now proved in several instances \cite{blomme2022abelian3,blomme2025multiple,klemm2010noether,pandharipande2016katz}) for the reduced Gromov-Witten theory of abelian and K3 surfaces.
For the case of $\mathbb P^1$-bundles on elliptic curve, the relation between reduced and correlated multiple cover formulas can be understood via degeneration formula; it is the  key to the proof of Oberdieck's  conjecture for abelian surfaces presented in \cite{blomme2025multiple}.

\medskip

The above suggests that there should be a broad geometric setting in which correlated invariants and their MCF phenomena should be defined and studied. We advance a proposal for what such a geometric setting should be and formulate a  precise conjecture in Section~\ref{sec:generalizedtakahasi}; the latter contains \cite[Conjectures~6.3]{gross2010tropical}, \cite[Theorem~1.1]{bousseau2019takahashi}, \cite[Theorem~5.15]{blomme2025multiple} as special cases.

\medskip

The present paper is dedicated to explore the parallel picture in the case of logarithmic Gromov -Witten invariants for toric surfaces relative to their toric boundary. 
In particular we will define the
\emph{correlated refinement} of log Gromov-Witten theory and prove that these refined invariants satisfy a multiple cover formula. 

For this geometry, the log GW invariants are controlled by the log double double ramification cycles \cite{ranganathan2024logarithmic} and the results are thus first presented in terms of the DDR class.

\subsection{Double ramification cycles}

Let $C\xrightarrow{\pi} S$ be a log smooth curve over a smooth algebraic stack with logarithmic structure and $\L$ a line bundle of degree zero on the fibers of $\pi$. In the open $S^{\circ}\subset S$ where $\pi$ is smooth, we can consider 
 the double ramification locus: the codimension $g$ substack defined by
\[\operatorname{DRL}(\L) = \{ s\in S^{\circ} \;\; |\;\; \L\rvert_{C_s}\simeq\O_{C_s}\}.\]

For example, for $S=\Mgnbar, \;S^{\circ}=\M_{g,n}$
and $\bfa=(a_1,\dots,a_n)\in\mathbb Z^n$ a vector satisfying $\sum_{i=1}^n a_i=0$ (we will call  $\bfa$ a \textit{ramification profile} or \emph{tangency profile}), we can take the line bundle $\O(\bfa)=\O(\sum_{i=1}^n a_ip_i)$. The double ramification locus is  defined by:
\[\operatorname{DRL}(\bfa) = \{ (C,p_1,\dots,p_n) \;\; |\;\; \O(\sum_{i=1}^n a_ip_i)\simeq\O\}.\]

Constructing a modular compactification of these double ramification loci and finding a tautological formula for their classes have been questions of central interest for over two decades
\cite{faber2005relative,janda2020double,holmes2021extending,marcuswiselog}.

In the most general formulation, these questions have been answered in \cite{bae2023pixton}, where the authors defined a cohomology class, morally the Poincar\'e dual of the closure of the zero section, admitting a \emph{universal Pixton formula} in the operational Chow ring of the universal Picard stack over prestable curves $\mathfrak{Pic}_{g,n,0}.$

As the data of $C\to S,\L$ as before defines a morphism $\varphi_{\L}\colon S\to \mathfrak{Pic}_{g,n,0},$ the so-called \emph{UniDR} formula induces by pull-back an expression for the DR class $\DR(\L)$ of the DR locus in all geometric situations. This recovers in particular the previously proved instances of Pixton's formula \cite{janda2017DRcurves,janda2020double}.


\medskip
Alongside with the double ramification cycle, it is natural to consider (compactifications) of intersections of DR loci: 
let $\L_1,\L_2$ two degree zero line bundles on the fibers of $C\xrightarrow{\pi} S$
the so-called \textit{double double ramification locus} (DDR for short) is defined by
\[\operatorname{DRL}(\L_1,\L_2)=\{s \in S^{\circ}\text{ s.t. }\L_1\rvert_{C_s}\simeq\L_2\rvert_{C_s}\simeq\O\}\subset S^{\circ}.\]
For example, if $\bfa$ and $\bfb$ are two ramification profiles,  
\[\operatorname{DRL}(\bfa,\bfb)=\{(C,p_1,\dots,p_n) \text{ s.t. }\O(\bfa)\simeq\O(\bfb)\simeq\O\}\subset\M_{g,n}.\]

Again, it is possible to construct a modular compactifcation of this locus, endowed with a (virtual) class defining a class  $\DDR(\L_1,\L_2)$ (see for example \cite{holmes2022intersection} or \cite[Section~1.6]{ranganathan2024logarithmic}) of the expected codimension in $\rm{CH}_*(S).$

\medskip

This was first considered in \cite{holmes2019multiplicativity}, where it was observed that the naive definition of DDR cycle as the product of $\DR(\L_1)$ and $\DR(\L_2)$ fails to have some desired invariant properties which are known to hold over the locus of compact type curves. The authors have the key insight that the multiplicativity properties for the DDR cycle can be restored working with classes in b-Chow (the b-Chow ring is defined as the colimit of
the Chow rings of all smooth blowups of $\Mgnbar$) and, at the end of the paper, \cite[Section~9]{holmes2019multiplicativity} the authors  outline how to approach the problem via logarithmic geometry.

\medskip

The logarithmic approach is then fully carried out in \cite{holmes2022intersection} working with the so called \emph{logarithmic double ramification cycle} (see also \cite{holmes2021extending,holmes2022intersection,holmes2025logDR}), a lift of the usual DR cycle to a class in $\rm{logCH}_*(\Mgnbar).$ Around the same time, Molcho-Ranganathan \cite{molcho2024case} develop an independent approach to intersection theory of higher DR loci, inspired by tropical compactifications and tropical intersections.

The upshot in both cases is that naive product formula remains true for logDR-cycles, as proven in \cite{holmes2022intersection}, allowing to see that also higher DR loci cycles are \emph{tautological}.

\medskip

In \cite{ranganathan2024logarithmic}, Ranganathan and the third named author developed a  technique that relates the logarithmic Gromov–Witten cycles of rigid and rubber geometries. Combined with  the aforementioned, they deduce that for maps to a toric variety relative to its full toric boundary, all logarithmic Gromov–Witten push-forwards lie in the tautological ring of the moduli space of curves. 
This result confirmed the importance of higher DR loci in enumerative geometry.

\subsection{Correlated refinement of DDR loci}

Let us momentarily go back to consider the DDR-locus
$\DDR(\L_1,\L_2)$ over the locus of smooth curves $S^{\circ}.$


\subsubsection{Correlation}

We assume that the previous $\L_1,\L_2$ come endowed with canonical $r$-roots. Equivalently, we rather work with $\L_1^r,\L_2^r$, so that $\L_1$ and $\L_2$ are natural roots. In this case, over the DDR-locus $\DDR(\L_1^r,\L_2^r)$, the line bundles $\L_1$ and $\L_2$ are $r$-torsion. In the example of $\Mgnbar$, we can consider the case when the ramification profiles have a non-trivial common divisor $r$.
 
The subgroup of $r$-torsion line bundles $\Pic^0_{ C/S^{\circ}}[r]$ (fiber-wise abstractly isomorphic to $\ZZ_r^{2g}$), is endowed with the Weil pairing \cite{milneAV}, a non-degenerate skew-symmetric form:
$$W_r\colon \Pic^0_{ C/S^{\circ}}[r]\times \Pic^0_{ C/S^{\circ}}[r]\to\mu_{r}.$$
We will call $\theta=W_r(\L_1,\L_2)$ the \emph{correlator} of the torsion line bundles.
Since the value of the correlator is locally constant on the moduli of pairs of roots, and the locus where $\L_i^r$ is trivial can be also described as the locus where $\L_i\cong \R_i$ for $\R_i$ the universal torsion bundles,  we see that the DDR-locus is a union of open and closed components, which we call \textit{correlated components}, where $\theta$ stays constant.


\subsubsection{Results}


Working with the logarithmic Jacobian \cite{molchowiselog} and using the logarithmic Weil pairing introduced in \cite{blommecarocci2025DR},
we extend the definition of correlators to compactifications of the DDR-locus over the boundary of $S$, and define a refinement of the DDR cycles\footnote{The reader is welcome to keep in mind the example  of $\Mgnbar$ at all time; this is ultimately the most geometrically meaningful example, especially for the application to log GW theory of toric varieties}:
\[\mathbf{DDR}(\L_1^r,\L_2^r) = \sum_{\theta}\DDR^\theta(\L_1^r,\L_2^r)\cdot (\theta)  \in \rm{CH}_*(S,\QQ)\otimes \QQ[\mu_r],\]
where the sum is over possible correlators $\theta\in\mu_{r}$. Using the canonical embedding $\mu_r\hookrightarrow\RR/\ZZ$, these can also be seen as elements in the group algebra $\QQ[\RR/\ZZ]$. 
We refer to Section \ref{sec-alg-recoll} for basic definitions and properties of the group algebra $\QQ[\RR/\ZZ]$, and to \cite{blomme2025multiple} for a compelling motivation to work with such gadget. 

\medskip

Combining the results about the multiplicativity of the \emph{logarithmic} double ramification cycles \cite{holmes2019multiplicativity}; the results of \cite{holmes2025logDR} telling us how to find logarithmic modification of $S$ over which the logDR class is represented by the UniDR formula for a certain line bundle; 
 the techniques of \cite{blommecarocci2025DR} (recalled in more details below), 
 one can show that the refined cycles $\DDR^\theta(\L_1^r,\L_2^r)$ 
are still tautological (see Corollary~\ref{cor:tautological}), and moreover they satisfy the following 
multiple cover formula (MCF):
\begin{theom}[\textbf{\ref{theo:MCF-FFR}}]
 The class $\DDR^\theta(\L_1^r,\L_2^r)$ only depends on the order of the correlator $\theta$. Furthermore, the correlated DDR-cycle satisfies the cycle theoretic $2g$-MCF:

\begin{equation}\label{eq:MCF1}
\operatorname{DDR}^\theta(\L_1^r,\L_2^r) = \sum_{k|r,\theta}k^{2g}\operatorname{DDR}^{\theta_\mathrm{prim}}(\L_1^{r/k},\L_2^{r/k}),
\end{equation}
where in the $k$-summand, $\theta_\mathrm{prim}$ is a primitive $(r/k)$-root of unity.
   \end{theom}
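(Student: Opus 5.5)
We reduce the statement to a computation on the space of pairs of $r$-torsion (logarithmic) line bundles, in the spirit of \cite{blommecarocci2025DR}. Write $\P_r$ for the (log) moduli of pairs $(\R_1,\R_2)$ of $r$-torsion log line bundles on the fibers of $C\to S$. It is finite étale of degree $r^{4g}$ over the relevant log modification of $S$. On $\P_r$ the correlator $W_r(\R_1,\R_2)$ is locally constant, so $\P_r=\bigsqcup_\theta \P_r^\theta$.

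\textbf{Step 1: realising $\DDR^\theta$.} By construction, $\DDR^\theta(\L_1^r,\L_2^r)$ is the pushforward to $S$ of the refined intersection of the two sections $\L_i\otimes\R_i^{-1}$ with the zero section of the log Jacobian, restricted to $\P_r^\theta$. By the multiplicativity of log DR cycles \cite{holmes2022intersection}, this is $\DR(\L_1\otimes\R_1^{-1})\cdot\DR(\L_2\otimes\R_2^{-1})$ on a common log modification. By \cite{holmes2025logDR}, each factor is given by the UniDR formula pulled back along the classifying map $\P_r\to\mathfrak{Pic}$.

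\textbf{Step 2: dependence only on the order.} The group $\mathrm{Sp}$ of the torsion lattice acts through monodromy. Concretely, automorphisms of $\P_r$ over $S$ are induced by $(\R_1,\R_2)\mapsto(\R_1^a\R_2^b,\R_1^c\R_2^d)$, but these do not preserve $\L_i$. A better route is to use Pixton-type formulas. The UniDR expression for $\DR(\L\otimes\R^{-1})$, pushed forward from torsion components, depends only on combinatorial data: the twists modulo $r$ on edges, or the tropical weightings. Following \cite{blommecarocci2025DR}, the pushforward from $\P_r^\theta$ reduces to counting, for each stable graph and each weighting mod $r$, the number of torsion pairs on the normalization with prescribed edge data and prescribed Weil pairing. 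That count is a product over vertices of counts of pairs in $(\ZZ_r)^{2g_v}$ with a given symplectic pairing, times a factor from the graph's $H^1$. We would show this count depends on $\theta$ only through its order, using transitivity of $\mathrm{Sp}_{2h}(\ZZ_r)$ on pairs with a fixed pairing value up to units. The case of loops, where the Weil pairing pairs edge data with the $H^1$ of the graph, must be handled via the log Weil pairing of \cite{blommecarocci2025DR}.

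\textbf{Step 3: MCF.} Decompose the pair $(\R_1,\R_2)$ by the subgroup it generates. If $\theta$ has order $r/k'$, write the contributions in terms of pairs that are $k$-th powers: those with $\R_i=\R_i'^{\,k}$ for $k\mid r,\theta$. Summing over $k$ gives a Möbius-type inversion. The essential count is that the map $\R'\mapsto\R'^{\,k}$ from $r$-torsion to $(r/k)$-torsion has fibers of size $k^{2g}$ for each of the two bundles. This yields exactly the factor $k^{2g}$ per pair after matching against the normalisation of $\DDR^{\theta_{\rm prim}}(\L_1^{r/k},\L_2^{r/k})$, because one bundle's $k^{2g}$ is absorbed by identifying $\L_i^{r}=(\L_i^{k})^{r/k}$.

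The main obstacle is the verification over the boundary in Steps 2–3. There the torsion group degenerates, since toric-part twists modulo $r$ appear, and the fiber sizes $k^{2g}$ must be checked stratum by stratum using the log Jacobian. The identity must hold compatibly with the log modification on which the DR formula is exact. We would first prove both claims on the level of the combinatorial counts of \cite{blommecarocci2025DR}, graph by graph, and then sum.
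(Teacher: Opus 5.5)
Your framework is the paper's: a finite étale space of torsion pairs, UniDR on it, a splitting by the Weil pairing, and pushforward stratum by stratum. There is a genuine gap, though: Step~3 rests on a wrong mechanism, and the computation that actually proves the theorem is missing.

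\textbf{The comparison between levels.} The level-$(r/k)$ cycle is not obtained from the level-$r$ one through $\R'\mapsto(\R')^{k}$. That map pulls $\L_i\otimes\R_i^{-1}$ back to $\L_i\otimes(\R'_i)^{-k}$, not to $\L_i\otimes(\R'_i)^{-1}$, so it does not relate the DR factors at all. Likewise, the identification $\L_i^r=(\L_i^k)^{r/k}$ concerns $\DDR(\L_1^r,\L_2^r)$ with roots $\L_i^k$. That is a different cycle from $\DDR(\L_1^{r/k},\L_2^{r/k})$, which is the locus where the $\L_i$ themselves are $(r/k)$-torsion. The correct comparison is by inclusion. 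Pairs of $(r/k)$-torsion bundles form an open and closed substack of $\P_r$ (cf.\ Propositions~\ref{prop:change-order} and~\ref{prop:push-forward-compatibility}), and the same decorated-strata expression restricts to it. On that substack the correlator is $W_{r/k}$, with $W_r=kW_{r/k}$ in $\RR/\ZZ$. Multiplication by $k$ is not injective on $\mu_{r/k}$ in general, so the $\theta$-components at the two levels do not match termwise. Decomposing by the subgroup generated by $(\R_1,\R_2)$ is therefore only bookkeeping and cannot produce either the coefficients $k^{2g}$ or the order-dependence.

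\textbf{The missing stratum count.} What must be supplied is the correlated degree of the projection on each stratum, as a function of the level $l$.
\begin{itemize}
\item Fix a stratum with dual graph $\Gamma$ and fixed tropical classes $\lambda_1,\lambda_2$ of the torsion bundles, of orders $\omega_1,\omega_2$. Use the splitting $\LogPic^0[l]\cong\mathrm{TroPic}^0[l]\oplus H^1(\Gamma,\mu_l)\oplus\prod_v\Pic^0_{C_v}[l]$. It is compatible with the log Weil pairing: the first two factors are dual, and the last is orthogonal to both.
\item The loop contribution is then not a scalar factor but the group-algebra element $l^{2b_1(\Gamma)}T_{\omega_1}T_{\omega_2}$, because $\mu\mapsto\langle\lambda_j,\mu\rangle$ surjects onto $\mu_{\omega_j}$.
\item Together with the vertex part this gives $\bfG_{b_1(\Gamma),2g}(l,\omega_1,\omega_2)=l^{2g}T_{\omega_1}T_{\omega_2}\sum_{m|l}J_{2g-2b_1(\Gamma)}(m)T_m$ (Propositions~\ref{prop:expression_G} and~\ref{prop:degreecomputation}).
\item Order-dependence follows because this lies in the span of the $T_m$. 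On these strata you cannot simply rescale a bundle by a unit, since that moves the tropical coset and hence the decoration.
\item The MCF follows because $T_{\omega_1}T_{\omega_2}=T_{\mathrm{lcm}(\omega_1,\omega_2)}$. Each stratum therefore contributes $l^{2g}\sum_{n|l}s(n)T_n$ with $s$ independent of $l$.
\end{itemize}
Two further points. Your claimed transitivity of $\mathrm{Sp}_{2h}(\ZZ_r)$ on pairs with a given pairing is false, e.g.\ $(0,0)$ versus $(x,0)$. In Step~1, an arbitrary almost-twistable modification from \cite{holmes2025logDR} is not enough. All levels must be computed on one model pulled back from a single modification of $S$, so that strata and decorations are shared. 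The paper arranges this with $\sigma$-stable twists of $\L_j^r$, the root stack $\tS_r$, specific tropical representatives of the roots, and a proof that $(\widetilde C^\sigma_{r,j},\R_j,0)$ is already almost twistable (Proposition~\ref{prop:almosttwistable}).
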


Contrarily to the $\PP^1$-bundle over elliptic base case \cite{blomme2024correlated}, the dependence on the order does not immediately follow from the definition. The choice of $\theta_\mathrm{prim}$ among primitive roots does not matter since they all have the same contribution by the first part of the theorem.

\medskip

In fact, we will see Section~\ref{sec:higher-DR-log} that the correlated refinement lifts to the logarithmic DDR class and in Theorem~\ref{theo:MCF-FFR} we actually prove that the multiple cover formula \eqref{eq:MCF1} holds replacing  $\mathrm{DDR}^\theta(\L_1^r,\L_2^r)$ with $\mathrm{logDDR}^\theta(\L_1^r,\L_2^r).$ The statement above then becomes an immediate corollary by \cite{holmes2022intersection}.




\medskip

The equation \eqref{eq:MCF1} above may then be reformulated using the formalism of the group algebra. Let us denote by  $T_k=\frac{1}{k}\sum_{k\theta\equiv 0}(\theta)$. A consequence of the fact that the correlated cycles only depends on the order of $\theta$, is that $\mathbf{DDR}(\L_1^r,\L_2^r)$ belongs to the span of the $T_k$ for $k|r$. If we denote by $\mathrm{Prim}_r$ the $T_r$-coefficient, then the multiple cover formula for the correlated DDR cycle is equivalent to the following functional equation:

$$\mathbf{DDR}(\L_1^r,\L_2^r) = \sum_{k|r}k^{2g}\Prim_{r/k}\mathbf{DDR}(\L_1^{r/k},\L_2^{r/k})T_{r/k}.$$


\subsection{Correlated refinement for log GW of toric surfaces}

Considering the refinement for the case $S=\Mgnbar$ and $\bfa,\bfb$ two ramification profiles with non trivial common divisor $r$, adapting 
to the correlated refinement  the arguments in \cite{ranganathan2024logarithmic} expressing logarithmic Gromov-Witten invariants in terms of products of logDR-loci, we prove a multiple cover formula for \emph{correlated log GW invariants of toric surfaces}, introduced in Section~\ref{sec:correlatedlogGW}.

\begin{theom}[\textbf{\ref{thm:toriccorrelated}}]
    Let $(X_{\Sigma},\partial X_{\Sigma})$ be a toric surface with its full toric boundary and $\bfa,\bfb$ tangency profiles supported on $\Sigma$.\footnote{In the context of logarithmic GW of toric surface the tangency profile is often given as $n$ vectors in $\mathbb Z^2$ rather than two vectors in $\mathbb Z^n.$ The data is obviously equivalent. Then being supported on $\Sigma$ means that the $n$ vectors $(a_i,b_i)$ are each parallel to one of the rays of the fan.}
    For fixed insertions $\alpha\in \rm{CH}^*(\Mgnbar),$  $\underline{\gamma}\in \rm{CH}^*(Ev_{(\bfa, \bfb)}(X))$ and fixed correlator $\theta,$ the correlated log GW invariants $\langle \alpha ; \underline{\gamma} \rangle_{g,(\bfa, \bfb)}^{\theta}$  satisfy the following MCF

\begin{equation}\label{eq:MCF2}
\langle \alpha ; \underline{\gamma} \rangle_{g,(\bfa, \bfb)}^{\theta}=\sum_{k|\bfa,\bfb,\theta}k^{3g-3+n-\deg\alpha} \langle \alpha ; \underline{\gamma} \rangle_{g,(\frac{\bfa}{k}, \frac{\bfb}{k})}^{\theta_{\rm{prim}}} .
\end{equation}
\end{theom}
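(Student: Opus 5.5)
We reduce the statement to the correlated multiple cover formula for the logarithmic DDR class, Theorem~\ref{theo:MCF-FFR}, by following the strategy of \cite{ranganathan2024logarithmic}, keeping track of the correlator throughout.

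\textbf{Step 1: reduction to rubber.} For the full toric boundary of a toric surface $X_\Sigma$, the moduli space of log maps with tangency profile $(\bfa,\bfb)$ maps to the rubber (torus-quotient) moduli space. The virtual class of the rubber space is the logarithmic DDR cycle $\mathrm{logDDR}(\O(\bfa),\O(\bfb))$ on a log modification of $\Mgnbar$. The rigid invariants with insertions $\underline{\gamma}$ pulled back along $Ev_{(\bfa,\bfb)}(X)$ are then obtained from rubber invariants by the rigidification argument of \cite{ranganathan2024logarithmic}. This argument amounts to pairing with classes on the evaluation space, which is a torus-torsor quotient of a product of boundary divisors.

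\textbf{Step 2: compatibility with correlators.} The correlator is defined from the $r$-th roots $\O(\bfa/r)$ and $\O(\bfb/r)$, where $r=\gcd(\bfa,\bfb)$. We check that each step of the comparison respects the decomposition into open and closed correlated components. Concretely, the logarithmic Weil pairing of \cite{blommecarocci2025DR} is locally constant on the moduli of log maps. Moreover, the rigidification and pushforward maps are compatible with the splitting by $\theta$, because the torus action and the evaluation maps do not change the isomorphism class of the line bundles on the underlying curve, only their trivialisation data. This gives a $\theta$-by-$\theta$ identity
\[
\langle \alpha;\underline{\gamma}\rangle^{\theta}_{g,(\bfa,\bfb)}=\int_{\Mgnbar}\alpha\cdot \mathrm{logDDR}^{\theta}(\O(\bfa/r)^r,\O(\bfb/r)^r)\cdot \Gamma_{\bfa,\bfb}(\underline{\gamma}),
\]
with $\Gamma_{\bfa,\bfb}$ an explicit class built from the $\underline\gamma$ and psi/boundary contributions.

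\textbf{Step 3: apply the MCF and rescale.} Insert Theorem~\ref{theo:MCF-FFR}. This produces terms $k^{2g}\,\mathrm{logDDR}^{\theta_{\rm prim}}(\O(\bfa/r)^{r/k},\O(\bfb/r)^{r/k})$. These are exactly the DDR cycles for the profile $(\bfa/k,\bfb/k)$ with primitive correlator. It remains to compare $\Gamma_{\bfa,\bfb}$ with $\Gamma_{\bfa/k,\bfb/k}$. We expect this factor to scale as $k^{\,g-3+n-\deg\alpha}$, and the missing power of $k$ comes from the dimension count. The rubber class has codimension $2g$, and the evaluation and rigidification pairing involves a homogeneous polynomial in $(\bfa,\bfb)$: the $\psi$-type classes and the multiplicities from the torus quotient scale linearly with the contact orders. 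Its degree is fixed by the dimension constraint $\dim = 3g-3+n-\deg\alpha - 2g$ on the remaining factor. Homogeneity then gives $k^{2g}\cdot k^{g-3+n-\deg\alpha}=k^{3g-3+n-\deg\alpha}$.

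\textbf{Main obstacle.} The main obstacle is making this homogeneity precise: we must show that the rigidification contribution is exactly homogeneous of the right degree in $(\bfa,\bfb)$. I would first try to write it, via the formula in \cite{ranganathan2024logarithmic}, as a product of tropical multiplicities, namely determinants of pairs of contact vectors, and of evaluation classes. Each determinant factor scales by $k^2$, and the count of such factors is fixed by dimension. Compatibility of the correlator with the torus quotient in Step 2 is the second delicate point. There I would argue using only the fact that the correlated components are unions of connected components preserved by all the maps involved.
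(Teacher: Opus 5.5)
Your skeleton is the paper's. You pass from rigid to rubber while tracking correlated components; in the paper this is Proposition~\ref{prop:rigidrubberform}, and since the correlated components of $\overline{\M}_{g,(\bfa,\bfb)}(X)$ are defined as preimages of those of the DDR locus, the $\theta$-compatibility is essentially by definition. You then insert Theorem~\ref{theo:MCF-FFR} and account for the remaining power of $k$. Your target exponent $k^{g-3+n-\deg\alpha}=k^{\deg\underline{\gamma}-2}$ is also correct.

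However, Step~3 is the actual content of the proof, and you leave it as an expectation. The route you sketch for it would not work:
\begin{itemize}
\item $\psi$-classes on $\Mgnbar$ do not rescale with the contact orders at all.
\item Products of determinants of edge directions are tropical multiplicities. They arise only for point insertions, through a correspondence theorem, and not for arbitrary $\alpha\in\mathrm{CH}^*(\Mgnbar)$ and $\underline\gamma\in\mathrm{CH}^*(\mathrm{Ev}_{\bfa,\bfb}(X))$.
\item ``The degree is fixed by dimension'' gives a scaling exponent only if you already know the insertion factor has scaling weight equal to its cohomological degree. That is exactly what needs proof: a factor containing $\psi$-classes has degree one but weight zero.
\item You cannot compare $\Gamma_{\bfa,\bfb}$ and $\Gamma_{\bfa/k,\bfb/k}$ globally on $\Mgnbar$. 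The rubber evaluation only exists on the DDR locus, so what must be compared is their restriction to the support of $\mathrm{logDDR}(\bfa/k,\bfb/k)$.
\end{itemize}

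The missing idea is the following. Since $\mathrm{Ev}^{\mathrm{rub}}_{\bfa,\bfb}(X)$ is a smooth toric variety, $\gamma_{\mathrm{rub}}$ lifts to a homogeneous piecewise polynomial $\gamma^{\mathrm{trop}}_{\mathrm{rub}}$ of degree $\deg\underline\gamma-2$ on its fan. The rigid-to-rubber argument then gives $\langle\alpha;\underline\gamma\rangle^\theta_{g,(\bfa,\bfb)}=|K|\cdot\DDR^\theta_g(\bfa,\bfb)\cap\phi_{\bfa,\bfb}$, where $\phi_{\bfa,\bfb}$ is any extension to $\M^{\mathrm{trop}}_{g,n}$ of $\gamma^{\mathrm{trop}}_{\mathrm{rub}}\circ\ev^{\mathrm{trop}}_{\mathrm{rub}}$. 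You also omit the torsion factor $|K|$; it is harmless here because $K$ depends only on the rays.

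The key step is the scaling $f\mapsto k\cdot f$. It defines an embedding $i_k\colon\DDR^{\mathrm{trop}}_g(\bfa/k,\bfb/k)\hookrightarrow\DDR^{\mathrm{trop}}_g(\bfa,\bfb)$ over $\M^{\mathrm{trop}}_{g,n}$, since the metric graph is unchanged. The tropical rubber evaluation is linear in edge lengths with the slopes as coefficients, so it satisfies $\ev^{\mathrm{trop}}_{\mathrm{rub}}\circ i_k=k\cdot\ev^{\mathrm{trop}}_{\mathrm{rub},k}$. Hence $k^{-\deg\gamma_{\mathrm{rub}}}\phi_{\bfa,\bfb}$ is a legitimate extension for the profile $(\bfa/k,\bfb/k)$. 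Each term $k^{2g}\DDR^{\theta_{\mathrm{prim}}}(\bfa/k,\bfb/k)\cap\phi_{\bfa,\bfb}$ therefore equals $k^{2g+\deg\gamma_{\mathrm{rub}}}\langle\alpha;\underline\gamma\rangle^{\theta_{\mathrm{prim}}}_{g,(\bfa/k,\bfb/k)}$.

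The exponent then follows from the dimension constraint $\deg\alpha+\deg\underline\gamma=n+g-1$, which gives $2g+\deg\gamma_{\mathrm{rub}}=3g-3+n-\deg\alpha$. Without the scaling embedding of tropical DDR loci and the piecewise-polynomial description of the insertion, your Step~3 remains a heuristic rather than a proof.
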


\begin{expl}
    Take $X=\PP^2$ and consider curves of genus $g$ and degree $2d$ in $\PP^2$, which meet the toric boundary at $3d$ points with multiplicity $2$, i.e. the curve is tangent to the toric boundary at every points where they meet.
    In other words, we take the tangency profiles to be $\bfa=(-2,0,2)^d$ and $\bfb=(0,-2,2)^d$, where the $d$ exponent is a shorthand notation for two vectors in $\mathbb Z^{3d}$. The expected dimension of the space of such curves can be computed to be $3d+g-1$. 

    \medskip
    
    Then, fixing a configuration of $3d+g-1$ generic points in $\mathbb P^2$, we can ask what is the number of curves in $\PP^2$ as before passing through the fixed points. Formulating the question in the GW language leads to consider the log GW invariant $\gen{\pt^{3d+g-1}}_{g,(\bfa,\bfb)}$, which we denote by $N_{2,d,g}$.

    \medskip

    In this  example, $\gcd(\bfa,\bfb)=2$, so that the set of correlators is simply $\mu_2=\{\pm 1\}$
     and the only primitive correlator is $-1.$ The log GW  invariant can  be written as $N_{2,d,g}=N_{2,d,g}^++N_{2,d,g}^-$, where  the $+,-$ at the exponent denote the correlators. 

     \medskip
     
     
    In formulae, denoting by $N_{d,g}$ the number of degree $d$ and genus $g$ curves passing through $3d+g-1$ points in $\PP^2$, the MCF asserts that we have the relation
    $$N_{2,d,g}^+=N_{2,d,g}^-+2^{3g-3+(3d+3d-1+g)}N_{d,g},$$
    thus completely determining $N_{2,d,g}^\pm$ from $N_{2,d,g}$ and $N_{d,g}$.

    In the case of point insertions, this relation can also be proven using the tropical correspondence Theorem \cite{mikhalkin2005enumerative}, sorting tropical curves according to the g.c.d. of their edge weights, in the spirit of \cite{blomme2025short}.

    The approach of the current paper is much more general as it directly works with all possible insertions.
\end{expl}

\begin{rem}

Generalizing what happened in the example above we know that: the sum of correlated invariants over all possible correlators is the usual log GW invariant; each correlated invariant is expressed, thanks to the MCF, as a linear combination of  primitive correlated invariants. We thus get a triangular system of relations that we can invert to compute the correlated invariants from the standard log GW.

However, it is non-obvious from such description that the correlated invariants should be integers, even knowing that the uncorrelated ones are. In the case of points insertions, it is possible to adapt the tropical correspondence theorem from \cite{mikhalkin2005enumerative}, counting curves with a correlated multiplicity to compute the correlated invariants. The multiplicity of a curve $\Gamma$ with g.c.d. of its ends (alias unbounded edges) weights equal to $r$ and g.c.d. of all edge weights equal to $\delta$ can be computed to be $m_\Gamma\cdot T_{r/\delta}$ (see \cite[Section~6]{blomme2024correlated} for a similar computation in terms of Floor diagrams in the case of $\mathbb P^1$-bundles on elliptic curves). One can then check that for any toric surface and any tangency profile,  
\begin{equation}\label{eq:integrality}
 \gen{\pt^{3d+g-1}}_{g,(\bfa,\bfb)}^{\theta_{\mathrm{prim}}}\in\mathbb Z.
\end{equation}
The primitive correlated invariants should be thought as the analogue in this geometric setting of the so-called \emph{BPS count} in the sense of \cite[Section~6]{gross2010tropical}.
\end{rem}



\subsection{Ideas of proof for the MCF}\label{sec:ideaproof}
The idea to get a formula is similar to the one already used by the first two authors in \cite{blommecarocci2025DR}, modulo refining the strategy to get a formula for the logarithmic class.
\medskip

First, we consider a certain log modification of  the moduli space $S\oner$ parametrising log smooth curves with two (logarithmic) $r$-roots $\R_1,\R_2$ of our line bundle $\L_1^r,\L_2^r;$ we get log $r$-torsion line bundles $\mathcal T_1=\R_1\otimes \L_1^{-1}$ and $\mathcal T_2=\R_2\otimes \L_2^{-1}$. 


\medskip

The logarithmic Weil pairing defines a locally constant map from (a certain base change) the double spin space $S\oner$ with values in $\mu_{r},$ extending the usual Weil pairing on the locus of smooth curves.  We thus see that the latter decomposes in open and closed components
$S^\theta\oner$ on which the value $W_r(\T_1,\T_2)$ is constant.

\medskip

One can then apply the universal  DR-formula from \cite{bae2023pixton} for the line bundles  $\R_1,\R_2$ on each component $S^\theta\oner$. Since a (log) line bundle is trivial if an only if one of its root is, this DDR locus on the spin space push-forward to the cycle we are interested in. 
Up to working with an appropriate model of the double spin space (see Section~\ref{sec-LDDR}), and an appropriate twist for the line bundles, the classes one obtains applying the uni-DR formula are representative for the logarithmic spin DR classes  \cite{holmes2021extending,holmes2022intersection,holmes2025logDR} for the two logarithmic $r$-roots. In particular,  by \cite{holmes2022intersection}, we obtain a tautological formula for the logDDR class taking their product.

\medskip

Pushing forward from $S^\theta\oner$ to $S$ (or to be more precise to a suitable logarithmic blow-up) and keeping track of the degree on the various strata, we get a formula   for the (log) correlated classes $\DDR^\theta(\L_1^r,\L_2^r)$. 

\medskip


The MCF may then be proved looking at the explicit expression stratum by stratum, and follows from a degree computation.

\medskip


It is instructive to notice that in \cite{blomme2025multiple}, the MCF for the correlated DR-cycle boils down to combinatorics of torsion subgroups of an elliptic curve $E$.  In the present paper, the MCF for the correlated DDR-cycle reduces to counting pairs of vectors with a given pairing in a symplectic space over $\ZZ_r$, i.e. $\ZZ_r^{2g}$ with a non-degenerate skew-symmetric form. Both are actually elementary algebraic considerations completely independent of the GW-theory.

\subsection{Future directions}\label{sec:futuredir}






A broader setting unifying the definition of correlated logarithmic GW invariants of \cite{blomme2024correlated,blommecarocci2025DR} and the present one should come by considering log GW invariants of a smooth projective
 surface relative to an anticanonical divisor, which, by adjunction formula,  is a curve whose connected components have arithmetic genus $1.$ 

Indeed, in the case of a $\PP^1$-bundle over an elliptic curve, the union of the $0$ and $\infty$-sections is an anticanonical divisor, as so is the toric boundary of a toric surface. 



\subsubsection{Generalized Takahashi conjecture}\label{sec:generalizedtakahasi}

Let $X$ be a smooth del Pezzo surface and let $D\subseteq  X$ a smooth curve representing the anticanonical class. 
 Let further $\bfa=(a_1,\dots, a_n)$  be a tuple of non-negative integers with $\sum a_i=\beta\cdot[D]=-K_X\cdot\beta$.

We consider the moduli space of log-stable maps $\M_{g,n}(X|D,\beta,\bfa)$ parametrizing (at least in the nice locus) maps $f\colon(C,p_1,\dots,p_n)\to X$ such that the image curve meets $D$ at $f(p_i)$ with multiplicity $a_i$. Given such a map, in $\Pic^{\beta\cdot[D]}(D)$, we have
$$\O_D(\sum a_i f(p_i)) \cong \O_{X}(\beta)|_D,$$
where $\O_X(\beta)$ has a section cutting out the image curve; the notation emphasizes that it only depends on the curve class $\beta$ realized by the image curve. If we assume that the $a_i$ have a non-trivial common divisor $r$ that also divides $\beta$, for each $f$ as before we can consider the following $r$-torsion element in $\Pic^0(D)$:
\[\theta=\O_D(\sum \frac{a_i}{r} f(p_i))\otimes\O_{X}(-\beta/r)|_D.\]
The latter is locally constant, and thus the moduli space  $\M_{g,n}(X|D,\beta,\bfa)$ is a union of open and closed substacks where the \emph{correlator} $\theta$ is constant. 
For $\alpha\in H^*(\Mgnbar,\QQ)$ and $\underline{\gamma}$ a geometric insertion, we can thus define in the obvious way \emph{correlated invariants }
$\gen{\alpha;\underline{\gamma}}^\theta_{g,d,\bfa}$ for $(X,D),$ keeping track of the contribution to the invariants of the $\theta-$constant component. 

\begin{conj}\label{conj:generalizedTakahashi}
The following multiple cover formula holds:
\begin{equation}\label{eq:MCFTakahashi}
\langle \alpha ; \underline{\gamma} \rangle_{g,d,\bfa}^\theta=\sum_{k|\beta,\bfa,\theta}k^{3g-3+n-\deg\alpha} \langle \alpha ; \underline{\gamma} \rangle_{g,\beta/k,\bfa/k}^{\theta_{\rm{prim}}} .
\end{equation} 
Furthermore, for point insertions we have $\langle  \underline{[pt]} \rangle_{g,\beta,\bfa}^{\theta_{\rm{prim}}}$ are integers.
\end{conj}
For certain geometries the formula might need to be corrected by a sign depending  on $\beta\cdot K_X$ and on the order of the correlator.

 The case $X=\mathbb P^2$, $D$ smooth elliptic curve, $g=0$, and unique point of maximal tangency is the one considered in the standard Takahashi conjecture. 
 
\medskip

We in fact believe that it should be possible to define the correlated invariants for any nodal curve representing the anticanonical class, however it would take more care to do that in complete generality. 

\medskip

The present paper does so in the special  case of a toric surface with its full toric boundary and Theorem~\ref{thm:toriccorrelated} gives further evidence for our conjecture.

\medskip

More generally, we expect that there is a correlated refinement for log GW invariants of Looijenga pairs (essentially toric surfaces blown-up along some points in the interior of the toric boundary) and multiple cover formulae satisfied by such invariants. 

\medskip

 We expect that a proof of the conjecture above should be obtained applying a correlated refinement of the degeneration formula, in the spirit of the one proved and applied in \cite{blomme2024correlated}, once the case of Looijenga pairs has been understood.
 
   In particular, this would lead to a new proof of the Takahashi conjecture \cite{takahashi1996curves,takahashi2001log,bousseau2019takahashi} which does not require DT/GW correspondence results.


\subsubsection{Further future applications of correlated invariants}

 Finally, we believe that the study of correlated invariants should also be interesting and relevant in the following geometric situations

\begin{itemize}[leftmargin=0.4cm]
    \item There should be a reduced degeneration formula  for type III degeneration of an abelian surface, whose vertex contribution are precisely correlated GW invariants of toric surfaces as defined here. Then yet another proof of the MCF for abelian surfaces in the style of \cite{blomme2025short} should be deduced by the correlated MCF for toric invariants.
    \item More importantly, logarithmic invariants of toric surfaces are one of the two ingredients needed to tackle via reduced degeneration formula the MCF for K3 surfaces. The second ingredient will be precisely the study of correlated invariants and their multiple cover formulas for Looijenga pairs. The study of the correlated invariants in the setting of Looijenga pairs is the topic of the next paper in our series of works.
    
    \item The definition of correlated invariants naturally extends to higher dimensional toric varieties, via the different (rational) projections onto toric surfaces. 
    Unlike the case of dimension 2, where a toric surface relative to the toric boundary may sometimes be obtained degenerating a smooth anticanonical divisor (e.g. $\PP^2$ with a smooth cubic degenerating to the union of three lines), setting where we also expect as explained above a multiple cover formula phenomena, the higher dimensional situation does not admit a similar interpretation (for instance, $\PP^3$ with a family of quartics degenerating to the union of four planes).
    
     However, the existence of these refinements may hint toward multiple cover phenomena  for higher dimensional toric varieties which in turn might be useful to approach the  MCF for abelian threefolds conjectured in \cite{bryan2018curve}.
\end{itemize}

\subsection{Plan of the paper}

In Section \ref{sec-alg-recoll} we recall some basic algebraic  properties of functions with values in the group algebra of $\RR/\ZZ$, we introduce  algebraic multiple cover formulas for such functions and prove that certain functions defined from counts of pairs of vectors in a finite symplectic space do satisfy the algebraic MCF. 

In Section~\ref{sec:loggeometry} we (briefly) recall some of the necessary background in logarithmic geometry. In particular, we recall some properties of the logarithmic and tropical Picard group and some properties about moduli spaces of roots of line bundles in tropical and logarithmic geometry.

In Section~\ref{sec-LDDR}, we collect the definition of DR and logDR loci and recall the criteria under which the class obtained via the universal DR formula is a representative for the logarithmic class. Furthermore, we introduce the correlated refinement of DDR loci via logarithmic Weil pairing and via logarithmic reciprocity law.

In Section~\ref{sec-MCF-DDR}, we proceed as anticipated in Section~\ref{sec:ideaproof} to give a tautological formula for the log DDR cycle over a double spin moduli space and then study the degree of the push-forward  to $\Mgnbar$ on the various components and strata of this spin space to prove the multiple cover formula \eqref{eq:MCF1}.


Finally, in Section~\ref{sec:toric}, we prove the multiple cover formula for correlated log-GW invariants of toric surfaces relative to the toric boundary  \eqref{eq:MCF2}, adapting the arguments from \cite{ranganathan2024logarithmic}.

\medskip

\textit{Acknowledgements.} 
The authors would like to thank Sam Johnston, Thibault Poiret, Ilia Itenberg, Davesh Maulik, David Holmes, Sam Molcho, Rahul Pandharipande, and Dhruv Ranganathan for useful conversations about the topic of this paper. Special thanks go to Dhruv Ranganathan for several useful comments on a earlier draft of this paper.
Part of this work was conducted during visits of the authors at ETH Zurich and at the SwissMap research station in Les Diablerets; we thank both institutions for the hospitality and the stimulating research environment.
T.B. acknowledges the support of SNF grant 204125. F.C. is supported by the MIUR Excellence Department Project Mat-
Mod@TOV, CUP E83C23000330006, awarded to the Department of Mathematics,
University of
Rome Tor Vergata, and also acknowledges the support of the PRIN Project
``Moduli spaces and
birational geometry'' 2022L34E7W. A.U.K. is supported by SNF grant P500PT-225465.

\section{Algebraic recollections}
\label{sec-alg-recoll}

\subsection{Functions with values in the group algebra and refinement}

\subsubsection{$\NN$-module}

We start recalling some results from \cite[Section~2]{blomme2025multiple} about functions with values in a group algebra, useful to formulate and prove multiple cover formulas (MCF).
\begin{defi}
    A $\NN$-module is a set $X$ with a free action of $\NN$ such that for each $x\in X$  there is only a finite number of divisors, i.e. a finite number of $\widetilde{x}\in X$ such that $n\widetilde{x}=x$ for some $n.$ Furthermore $X$ is equipped with a \textit{norm} map $|\cdot|\colon X\to\NN$ compatible with the action. A morphism of $\NN$-modules is a map compatible with the action and preserving the norm.
\end{defi}

\begin{expl}
We have the following elementary examples.
\begin{itemize}[leftmargin=0.4cm]
    \item The trivial $\NN$-module $\NN$ with multiplication by itself and norm is the identity function.
    \item The $d$-dilated module $\NN$ with norm given by $d\cdot\mathrm{id}\colon\NN\to\NN$.
    \item The set $\DDD$ of families of $n$ non-zero vectors  $v_i\in\ZZ^2$ such that $\sum v_i=\underline{0}$ is a $\NN$-module whose norm is given by the gcd of all the entries of the $v_i$. 
    
    \item The set of parametrized tropical curves in $\RR^2$ with norm given by the gcd of the ends is a $\NN$-module with a morphism to the $\NN$-module $\DDD$ of degrees.
\end{itemize}
\end{expl}

\subsubsection{Group algebra}

Let $\G$ be the group algebra of $\RR/\ZZ$. We denote by $T_r$ the average of $r$-torsion elements, which is an idempotent element in $\G$:
$$T_r = \frac{1}{r}\sum_{r\theta\equiv 0}(\theta).$$
The subalgebra generated by $(T_k)_{k|r}$ is denoted by $\G_r$. The coefficient in front of $T_r$ is called \textit{primitive coefficient}.

\medskip

We consider functions defined on a $\NN$-module $X$ with values in the group algebra; we will write $\G^{\NN}$ for such functions in the case of the trivial $\NN$-module and $\G^X$ for a general $\NN$-module. The module $\G^{\NN}$ is endowed with two products: the usual term-by-term product but also the convolution product
$$(f\ast g)(r) = \sum_{kl=r}f(k)g(l).$$
The vector space $\G^X$ has a $\G^{\NN}$-module structure given by the following action:
$$f\ast F(x) = \sum_{k|x}f(k)F(x/k).$$
This operation is well-defined since by assumption any element in $X$ has a finite number of divisors.

\begin{defi}
    A function $f\colon \NN\to\G$ is of \emph{diagonal type} if $f(r)\in\G_r$. A function $F\colon X\to\G$ defined on a $\NN$-module $X$ is of \emph{diagonal type} if $F(x)\in\G_{|x|}$.
\end{defi}

\begin{rem}
    Notice that if $f\colon\mathbb N\to\G$ is of diagonal type then the coefficient $c_{\theta}$ of  a $r$ torsion element $\theta$ in $f(r)$ only depends on the order of $\theta.$ This is easily seen comparing the two expressions $\sum_{k|r}C_k\cdot T_k=f(r)=\sum_{r\theta=0} c_{\theta}\cdot (\theta).$
\end{rem}
\subsubsection{Multiple cover formulas}

Among diagonal type functions, we have specific functions satisfying a functional equation asserting that it is possible to recover the whole function from its primitive coefficients. We denote by $\epsilon_{\alpha}(n)=n^\alpha$ he $\alpha$-power function.

\begin{defi}
    A function $f\colon \NN\to\G$ satisfies the $\alpha$-MCF if $f=\epsilon_\alpha\ast(\Prim(f)T)$:
    $$f(r) = \sum_{kl=r}k^\alpha\Prim_l(f(l))T_l.$$
    A function $F\colon X\to\G$ satisfies the $\alpha$-MCF if $F=\epsilon_\alpha\ast(\Prim(F)T)$:
    $$F(x) = \sum_{k|x}k^\alpha\Prim_{|x/k|}(F(x/k))T_{|x/k|}.$$
\end{defi}

\begin{prop}\cite[Section~2]{blomme2025multiple}
The functions satisfying multiple cover formulas satisfy the following properties.
    \begin{enumerate}
        \item If $f$ satisfies the $\alpha$-MCF, then $\epsilon_r f$ satisfies the $(\alpha+r)$-MCF.
        \item If $f,g$ satisfy the $0$-MCF, then $fg$ satisfies the $0$-MCF.
        \item A diagonal type function $F\colon X\to\G$ satisfies the $\alpha$-MCF if and only if its restriction to each primitive orbit does. (i.e. the orbit of a primitive element)
        \item A diagonal function $f\colon\NN\to\G$ with norm $d\cdot\mathrm{id}$ satisfies the $\alpha$-MCF if and only if it is invariant by multiplication by $T_d$ and $\m{d}\circ f$ satisfies the usual $\alpha$-MCF (as function on the trivial $\NN$-module). We recall that $\m{d}\colon (\theta)\mapsto (d\theta)$ is an endomorphism of the group algebra.
    \end{enumerate}
\end{prop}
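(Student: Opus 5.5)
The plan is to reduce everything to two elementary identities in $\G$ and then prove the four items by comparing coefficients in the basis $(T_k)$. The first identity is
$$T_kT_l=T_{\mathrm{lcm}(k,l)}.$$
It holds because the product of the two averages is the average over the sum subgroup $\frac1k\ZZ/\ZZ+\frac1l\ZZ/\ZZ=\frac{1}{\mathrm{lcm}(k,l)}\ZZ/\ZZ$. The second identity is
$$\m{d}(T_r)=T_{r/\gcd(r,d)}.$$
It holds because multiplication by $d$ maps the $r$-torsion subgroup onto the $r/\gcd(r,d)$-torsion subgroup, with fibers of size $\gcd(r,d)$. We also use throughout that the $T_k$ with $k\mid r$ are linearly independent. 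This is clear from triangularity: $T_k$ is the only one among them involving the primitive $k$-torsion elements with nonzero coefficient beyond those already in smaller $T$'s. In particular $\Prim_r$ is well defined on $\G_r$. Moreover, for a function satisfying an MCF, the primitive coefficient at $r$ is exactly the $k=1$ term, since $T_r$ appears in no other summand.

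For (1), write $(\epsilon_rf)(n)=n^rf(n)=\sum_{kl=n}k^{\alpha+r}\,l^r\Prim_l(f(l))T_l$. Since $\Prim_l((\epsilon_rf)(l))=l^r\Prim_l(f(l))$, this is exactly the $(\alpha+r)$-MCF.

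For (2), write $P_k=\Prim_k(f(k))$ and $Q_l=\Prim_l(g(l))$. The $0$-MCF gives $f(n)=\sum_{k\mid n}P_kT_k$ and $g(n)=\sum_{l\mid n}Q_lT_l$. Multiplying and using the lcm identity, we get
$$(fg)(n)=\sum_{m\mid n}\Big(\sum_{\mathrm{lcm}(k,l)=m}P_kQ_l\Big)T_m.$$
The coefficient of $T_m$ does not depend on $n$. Evaluating at $n=m$ shows that it equals $\Prim_m((fg)(m))$, which is the $0$-MCF for $fg$.

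For (3), the first step is to check that every $x\in X$ is uniquely $x=n x_0$ with $x_0$ primitive. Existence follows from the finiteness of the set of divisors, taking a divisor with no further proper divisors. Uniqueness should follow from freeness together with compatibility of the norm, and this is the point I would be most careful about. Granting this, all divisors $x/k$ of $x$ lie in the orbit $\NN x_0$. The MCF at $x$ therefore involves only values of $F$ on that orbit, and the equivalence is immediate. Each orbit $\NN x_0$ is a copy of the $|x_0|$-dilated module, which is why (4) is the relevant case.

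For (4), with norm $d\cdot\mathrm{id}$ the MCF reads $F(n)=\sum_{kl=n}k^\alpha P_lT_{dl}$. If $F$ satisfies it, then $T_dF=F$ because $T_dT_{dl}=T_{dl}$. Applying $\m{d}$ and using $\m{d}(T_{dl})=T_l$ turns the MCF into the ordinary $\alpha$-MCF for $\m{d}\circ F$, with the same primitive coefficients. Conversely, assume $T_dF=F$ and $F(n)\in\G_{dn}$. Writing $F(n)=\sum_{e\mid dn}c_eT_e$ and multiplying by $T_d$, the lcm identity shows that $F(n)$ lies in the span of $\{T_{dl}\}_{l\mid n}$. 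On this span, $\m{d}$ sends $T_{dl}\mapsto T_l$ bijectively onto the span of $\{T_l\}_{l\mid n}$, hence is injective there. The MCF for $\m{d}\circ F$ therefore pulls back term by term to the MCF for $F$.

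The only genuinely delicate step is this support argument in (4), that $T_d$-invariance forces $F(n)$ into the span of the $T_{dl}$, together with the precise form of $\m{d}(T_r)$; the rest is coefficient bookkeeping.
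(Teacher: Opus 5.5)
Your proof is correct, with one caveat about item (3). The paper gives no argument of its own here; it only cites \cite[Section~2]{blomme2025multiple}. Your direct coefficient comparison in the basis $(T_k)$, based on $T_kT_l=T_{\mathrm{lcm}(k,l)}$ and $\m{d}(T_{dl})=T_l$, is the natural self-contained verification. Items (1), (2) and (4) go through as you wrote them, including the step where $T_d$-invariance forces $F(n)$ into the span of the $T_{dl}$.

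The caveat concerns the step you flagged in (3). The unique decomposition $x=nx_0$ with $x_0$ primitive does \emph{not} follow from injectivity of $n\mapsto nx$, finiteness of divisors and compatibility of the norm. For instance, glue $\NN y_1$ and $\NN y_2$ along $2ky_1=3ky_2$, with $|y_1|=3$ and $|y_2|=2$. All of these axioms hold, but the MCF at $x=2y_1=3y_2$ in $X$ contains the term $3^\alpha\Prim_2(F(y_2))\,T_2$. That term is invisible on the orbit $\NN y_1$, so (3) itself fails for this module.

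You must therefore read ``free'' as ``free $\NN$-set'', i.e.\ $X=\bigsqcup_{x_0\ \mathrm{primitive}}\NN x_0$. With that reading, uniqueness (and cancellation, so that $x/k$ is well defined) holds by definition, the norm plays no role, and your argument for (3) is complete.
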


\subsection{Refined count of vector pairs}

Let us denote $\ZZ_r=\ZZ/r\ZZ$. We consider $V=\ZZ_r^{2g}$ a free $\ZZ_r$-module endowed with a \emph{non-degenerate skew-symmetric form} $\omega$. We consider the $\omega$-refined count of  pairs of vectors $(x,y)\in V\times V:$ 
$$\bfF_{2g}(r) = \sum_{x,y\in V}(\omega(x,y)) \in\QQ[\ZZ_r]\subset\QQ[\RR/\ZZ],$$
where $(\theta)$ denotes the group algebra element associated to $\theta\in\ZZ_r$.

\begin{prop}
    We have the explicit expression
    $$\bfF_{2g}(r) = r^{2g}\sum_{k|r}J_{2g}(k)T_k,$$
    where $J_{2g}$ is the $2g$-Jordan function. In particular, $\bfF_{2g}$ satisfies the $2g$-MCF.
\end{prop}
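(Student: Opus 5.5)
The plan is to compute the coefficient of each $(\theta)$ in $\bfF_{2g}(r)$ directly, i.e. to count pairs $(x,y)\in V^2$ with $\omega(x,y)=\theta$. We stratify by $x$. For $x=0$ all $r^{2g}$ choices of $y$ give pairing $0$. For $x\neq 0$, let $d$ be the order of $x$ in $V$, i.e. $x=(r/d)x'$ with $x'$ of order $r$, so $x'$ can be completed to a symplectic basis (nondegeneracy over $\ZZ_r$ and unimodularity of a primitive vector). Then $y\mapsto\omega(x',y)$ is a surjective homomorphism $V\to\ZZ_r$, with all fibers of size $r^{2g-1}$. Hence $y\mapsto \omega(x,y)=(r/d)\omega(x',y)$ is uniformly distributed on the subgroup $(r/d)\ZZ_r\cong \ZZ_d$, and therefore
$$\sum_{y\in V}(\omega(x,y)) = r^{2g}\,T_d ,$$
using that $T_d$ is the average over $d$-torsion elements of $\RR/\ZZ$ under the embedding $\ZZ_r\hookrightarrow\RR/\ZZ$. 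The case $x=0$ fits the same formula with $d=1$.

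It then remains to count the elements of $V=\ZZ_r^{2g}$ of exact order $d$, for $d\mid r$. These are the elements of $(r/d)V\cong\ZZ_d^{2g}$ of exact order $d$, and their number is $J_{2g}(d)$ by the standard characterization of the Jordan totient ($d^{2g}=\sum_{k|d}J_{2g}(k)$, via Möbius inversion). Summing over $x$ gives
$$\bfF_{2g}(r)=r^{2g}\sum_{k\mid r}J_{2g}(k)T_k.$$

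For the MCF, we first note that the function is of diagonal type, and that its primitive coefficient is $\Prim_r(\bfF_{2g}(r))=r^{2g}J_{2g}(r)$. The $2g$-MCF then requires
$$\sum_{kl=r}k^{2g}\,l^{2g}J_{2g}(l)T_l=r^{2g}\sum_{l|r}J_{2g}(l)T_l,$$
which holds because $k^{2g}l^{2g}=r^{2g}$.

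The only step needing genuine care is the completion of a primitive vector to a symplectic basis over the ring $\ZZ_r$, which is not a field. This is the claim that $\omega(x',\cdot)$ is surjective. To justify it, we will argue that nondegeneracy gives an isomorphism $V\to V^\vee$, and that a vector of order $r$ in a free $\ZZ_r$-module is unimodular, so some functional sends it to $1$. Alternatively, one can reduce to prime powers via the Chinese remainder theorem. Everything else is routine.
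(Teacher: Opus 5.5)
Your proposal is correct and follows the paper's proof essentially step for step: you stratify by the order $d$ of $x$, write $x=\frac{r}{d}x'$ with $x'$ primitive, use nondegeneracy to get $\sum_{y\in V}(\omega(x,y))=r^{2g}T_d$, count the $J_{2g}(d)$ elements of order $d$, and verify the MCF (the paper divides by $r^{2g}$ and reduces to the obvious $0$-MCF, which is the same computation as your direct check). The one point that both you and the paper pass over quickly is that the order-$r$ lift $x'$ has to be chosen, since not every $x'$ with $\frac{r}{d}x'=x$ works (e.g.\ $3\cdot(3,0)=(3,0)$ in $\ZZ_6^2$), but you can bypass this entirely: if the image of $\omega(x,\cdot)$ has order $e$, then $e\mid d$, and $\omega(ex,\cdot)=0$ forces $ex=0$ by nondegeneracy, so $e=d$.
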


\begin{rem}
    The $m$-Jordan function counts the number of order $r$ elements in $\ZZ_r^m$. It is given by
    $$J_m(r) = r^m\prod_{p|r}\left(1-\frac{1}{p^m}\right),$$
    where the product is over prime numbers dividing $r$. In particular $J_1$ is also known as Euler function $\varphi$.
\end{rem}

\begin{proof}
Let $x\in V$ be of order $l$. In particular, we can write $x=\frac{r}{l}\widetilde{x}$ where $\widetilde{x}$ is some primitive element in $V$. The map $y\mapsto\omega(x,y)$ can be seen as the composition between $y\mapsto\omega(\widetilde{x},y)$ and the multiplication by $\frac{r}{l}$. By non-degeneracy, $y\mapsto\omega(\widetilde{x},y)$ is surjective, and the image of $y\mapsto\omega(x,y)$ is thus exactly $\ZZ_l\subset\ZZ_r$. We conclude that for every $x$ of order $l$, we have
$$\sum_{y\in V}(\omega(x,y)) = r^{2g}T_l.$$
Summing over $l$, given that there are exactly $J_{2g}(l)$ elements of order $l$ in $V\simeq\ZZ_r^{2g}$, we get
$$\sum_{x,y\in V}(\omega(x,y)) = \sum_{x\in V}r^{2g}T_l = r^{2g}\sum_{l|r}J_{2g}(l)T_l.$$
Concerning the $2g$-MCF, dividing by $r^{2g}$, we may restrict to showing that $r\mapsto\sum_{k|r}J_{2g}(k)T_k$ satisfies the $0$-MCF, which is obvious from its expression.
\end{proof}

We now consider the free $\ZZ_r$-module
$$V=(L\oplus L^*)\oplus W,$$
where $W$ is a symplectic lattice, $L$ a lattice, $L^*$ its dual, and $L\oplus L^*$ is endowed with its canonical symplectic form coming from the pairing $\langle\cdot,\cdot\rangle$ between $L$ and $L^*$. 
We thus have a symplectic form on $V,$ which we also denote by $\omega$
defined by
$$\omega(l_1+l_1^*+w_1,l_2+l_2^* +w_2)=\langle l_1,l_2^*\rangle \cdot \langle l_1^*,l_2\rangle \cdot\omega(w_1,w_2).$$
Let $b$ be the rank of $L$ and $2g-2b$ be the rank of $W$. Let $\lambda_1,\lambda_2\in L$ be two fixed elements of respective orders $\omega_1,\omega_2$.
$$\bfG_{b,2g}(r,\lambda_1,\lambda_2) = \sum_{\substack{x\in\lambda_1+L^*+W \\ y\in\lambda_2+L^*+W}}(\omega(x,y)).$$

\begin{expl}\label{ex:troPic}
    We have in mind the following example: $C$ is a nodal curve with normalization $\widetilde{C}$ and dual graph $\Gamma$ with $h_1(\Gamma,\mathbb Z)=b;$ $V=\LogPic^0_C[r],$ $W=\Pic^0_{\widetilde{C}}[r]$ with the symplectic structure induced by the classical Weil pairing for abelian varieties under the self duality isomorphism; $L^*=H^1(\Gamma,\mu_r)$ and $L=\mathrm{TroPic}^0_C[r].$ It is proved in \cite[Proposition~2.13]{blommecarocci2025DR} that the logarithmic Weil pairing is non-degenerate, it induces the standard pairing between $L$ and $L^*$.
\end{expl}

\begin{prop}\label{prop:expression_G}
    The value of $\bfG_{b,2g}(r,\omega_1,\omega_2)$ only depends on $\lambda_1,\lambda_2$ through their orders $\omega_1$ and $\omega_2$. Furthermore, we have the explicit formula:
    $$\bfG_{b,2g}(r,\omega_1,\omega_2) = r^{2g}T_{\omega_1}T_{\omega_2}\sum_{k|r}J_{2g-2b}(k)T_k.$$
    Extending the function by $0$ when $r$ is not divisible by $\omega_1$ and $\omega_2$, it satisfies the $2g$-MCF.
\end{prop}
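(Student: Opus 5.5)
The plan is to expand the pairing in the decomposition $V=(L\oplus L^*)\oplus W$ and observe that the sum factorizes into three independent pieces. Written additively in $\ZZ_r\subset\RR/\ZZ$, a pair $x=\lambda_1+x^*+w_1$, $y=\lambda_2+y^*+w_2$ with $x^*,y^*\in L^*$ and $w_i\in W$ has
$$\omega(x,y)=\langle\lambda_1,y^*\rangle\pm\langle x^*,\lambda_2\rangle+\omega(w_1,w_2).$$
The $L$–$L$ and $L^*$–$L^*$ cross terms vanish by definition of the canonical form. The sign depends on the skew-symmetrization convention, and it is harmless because every $T_k$ is invariant under $\theta\mapsto-\theta$. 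Since $(\cdot)$ turns sums into products in the group algebra, we get
$$\bfG_{b,2g}(r,\lambda_1,\lambda_2)=\Big(\sum_{y^*\in L^*}(\langle\lambda_1,y^*\rangle)\Big)\Big(\sum_{x^*\in L^*}(\langle x^*,\lambda_2\rangle)\Big)\,\bfF_{2g-2b}(r),$$
where the last factor is the refined count of pairs in the symplectic space $W\simeq\ZZ_r^{2g-2b}$.

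Next I evaluate each factor.
\begin{itemize}
\item For the first factor, I use that $L\simeq\ZZ_r^b$ is free and that the pairing $L\times L^*\to\ZZ_r$ is perfect. Exactly as in the proof of the previous proposition, write $\lambda_1=\frac{r}{\omega_1}\tilde\lambda_1$ with $\tilde\lambda_1$ primitive. Then $y^*\mapsto\langle\tilde\lambda_1,y^*\rangle$ is a surjective homomorphism, so $y^*\mapsto\langle\lambda_1,y^*\rangle$ has image exactly $\ZZ_{\omega_1}$, with all fibers of size $r^b/\omega_1$. Hence the first factor equals $r^bT_{\omega_1}$.
\item The same argument gives $r^bT_{\omega_2}$ for the second factor.
\item The previous proposition gives $\bfF_{2g-2b}(r)=r^{2g-2b}\sum_{k|r}J_{2g-2b}(k)T_k$.
\end{itemize}
Multiplying the three factors yields the claimed formula. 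It visibly depends on $\lambda_1,\lambda_2$ only through their orders.

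For the MCF, I first use property (1) of the proposition on MCF functions to strip off the factor $r^{2g}$. It then suffices to show that $h(r)=T_{\omega_1}T_{\omega_2}\sum_{k|r}J_{2g-2b}(k)T_k$ satisfies the $0$-MCF, with $h(r)=0$ when $m:=\mathrm{lcm}(\omega_1,\omega_2)\nmid r$. Using the idempotent relation $T_aT_b=T_{\mathrm{lcm}(a,b)}$, I regroup by $l=\mathrm{lcm}(m,k)$:
$$h(r)=\sum_{l|r,\ m|l}c_l\,T_l,\qquad c_l=\sum_{k:\ \mathrm{lcm}(m,k)=l}J_{2g-2b}(k).$$
The coefficient $c_l$ does not depend on $r$. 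Since the $T_l$ are linearly independent, the top coefficient of $h(l)$ is $\Prim_l(h(l))=c_l$ when $m|l$, and $\Prim_l(h(l))=0$ otherwise. Therefore $h(r)=\sum_{l|r}\Prim_l(h(l))T_l$, which is exactly the $0$-MCF.

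The only delicate step is the first one: checking that the cross terms of the form really vanish and that the affine sum over $\lambda_i+L^*$ collapses to a character sum over $L^*$. This needs $L^*$ to be free with a perfect pairing against $L$, which is the standing assumption here (and holds in Example~\ref{ex:troPic}). The rest is the bookkeeping already carried out for $\bfF_{2g}$.
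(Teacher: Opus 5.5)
Your proposal is correct. The first half matches the paper's proof exactly: you factor the sum into two character sums over $L^*$ times $\bfF_{2g-2b}(r)$, and you evaluate each character sum as $r^bT_{\omega_i}$ because the image is $\ZZ_{\omega_i}$.

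Your route differs from the paper in how the $2g$-MCF is derived.

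\textbf{The paper's route.} It invokes a separate lemma, applied with $\delta=\mathrm{lcm}(\omega_1,\omega_2)$: if $\bfG$ satisfies the $0$-MCF, then so does $T_\delta\bfG$ extended by zero. The lemma is proved by passing to the $\delta$-dilated $\NN$-module $\NN_\delta$ and pushing forward along $\iota\colon\NN_\delta\to\NN$. It then applies $\m{\delta}$ and cites Lemmas 2.19, 2.22 and 2.23 of \cite{blomme2025multiple}.

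\textbf{Your route.} You strip off $r^{2g}$ with property (1) and then check the $0$-MCF by hand.
\begin{enumerate}
\item Using $T_aT_b=T_{\mathrm{lcm}(a,b)}$, you rewrite $h(r)=\sum_{m|l|r}c_lT_l$.
\item You observe that each $c_l$ is independent of $r$.
\item You read off $\Prim_l(h(l))=c_l$.
\end{enumerate}

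\textbf{Comparison.} Your argument is self-contained and needs only two facts: the idempotent relation $T_aT_b=T_{\mathrm{lcm}(a,b)}$ and the linear independence of the $T_l$. The same regrouping also proves the paper's lemma in full generality. This is because the $0$-MCF for $\bfG$ says exactly that $\bfG(r)=\sum_{l|r}p_lT_l$ with $p_l$ independent of $r$, and multiplying by $T_\delta$ just replaces $T_l$ by $T_{\mathrm{lcm}(\delta,l)}$. The paper's route instead reuses the general $\NN$-module lemmas of \cite{blomme2025multiple} rather than redoing this bookkeeping, keeping the argument inside that formalism.
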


\begin{proof}
    The sum factors as follows:
    $$\sum_{\substack{x\in\lambda_1+L^*+W \\ y\in\lambda_2+L^*+W}}(\omega(x,y)) = \left(\sum_{\mu\in L^*}(\gen{\lambda_1,\mu})\right)\left(\sum_{\mu\in L^*}(\gen{\mu,\lambda_2})\right)\left(\sum_{x,y\in W}(\omega(x,y)\right).$$
    The third sum has already been computed to be $r^{2g-2b}\sum_{k|r}J_{2g-2b}(k)T_k$. For the first (resp. second) sum, one just needs to compute the image of $\mu\mapsto\gen{\lambda_1,\mu}$, which is $\ZZ_{\omega_1}\subset\ZZ_r$ where $\omega_1$ is the order of $\lambda_1$. We thus get $r^bT_{\omega_1}$. In the end, we get
    $$r^bT_{\omega_1}\cdot r^bT_{\omega_2} \cdot r^{2g-2b}\sum_{k|r}J_{2g}(k)T_k,$$
    yielding the announced expression. The $2g$-MCF follows from the following lemma, applied to $\delta=\mathrm{lcm}(\omega_1,\omega_2)$.
\end{proof}

\begin{lem}
    Let $\bfG\colon\NN\to\NN$ be a function satisfying the $0$-MCF and $\delta\geqslant 1$. We set
    $$\bfG_\delta(r) = \left\{ \begin{array}{l}
        T_\delta\cdot\bfG(r) \text{ if }\delta|r,  \\
        0 \text{ else.} 
    \end{array}\right.$$
    Then $\bfG_\delta$ satisfies the $0$-MCF.
\end{lem}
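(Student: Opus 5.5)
We first make the $0$-MCF hypothesis explicit. Write $\bfG=\epsilon_0\ast(\Prim(\bfG)T)$, i.e.
$$\bfG(r)=\sum_{l|r}c_l\,T_l,\qquad c_l:=\Prim_l(\bfG(l)).$$
The point is that the coefficients $c_l$ do not depend on $r$. Since $\bfG$ is of diagonal type (being in $\G_r$), this is just the statement that the $T_l$-coefficient of $\bfG(r)$ is the same for every multiple $r$ of $l$.

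Next we compute $\bfG_\delta(r)$ for $\delta|r$. The key algebraic input is the product rule for the idempotents:
$$T_a T_b = T_{\mathrm{lcm}(a,b)}.$$
This holds because $T_aT_b$ is the average over sums $\theta_1+\theta_2$ with $a\theta_1\equiv 0$ and $b\theta_2\equiv 0$. The map $\ZZ_a\times\ZZ_b\to\RR/\ZZ$ has image $\ZZ_{\mathrm{lcm}(a,b)}$ and fibers of equal size, so the average is uniform on that image.

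Using this rule, for $\delta|r$ we get
$$\bfG_\delta(r)=\sum_{l|r}c_l\,T_{\mathrm{lcm}(\delta,l)}.$$
Grouping the terms by $m=\mathrm{lcm}(\delta,l)$ gives
$$\bfG_\delta(r)=\sum_{\delta|m|r} d_m\,T_m,\qquad d_m:=\sum_{l:\ \mathrm{lcm}(\delta,l)=m}c_l.$$
Here we use that $l|m$ and $m|r$ are equivalent to $l|r$ and $\mathrm{lcm}(\delta,l)=m$. We also set $\bfG_\delta(r)=0$ when $\delta\nmid r$.

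Two facts now finish the argument.

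First, $d_m$ depends only on $m$ and $\delta$, not on $r$. This is because the $c_l$ are independent of $r$ and the condition $\mathrm{lcm}(\delta,l)=m$ forces $l|m$.

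Second, the primitive coefficient of $\bfG_\delta(m)$ equals $d_m$ when $\delta|m$, and $\bfG_\delta(m)=0$ otherwise. This follows by applying the formula above with $r=m$, where $T_m$ appears exactly with coefficient $d_m$. Setting $d_m=0$ for $\delta\nmid m$ makes this uniform.

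Hence, for all $r$ (both sides vanish when $\delta\nmid r$, since then no $m$ with $\delta|m|r$ exists),
$$\bfG_\delta(r)=\sum_{m|r}\Prim_m(\bfG_\delta(m))\,T_m.$$
This is exactly the $0$-MCF. The diagonal-type condition is also clear, since $\bfG_\delta(r)\in\G_r$.

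The only nontrivial step is the identity $T_aT_b=T_{\mathrm{lcm}(a,b)}$. It is elementary, but it must be verified carefully, including the uniformity of the fibers of the addition map.

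One remark on hypotheses: the statement writes $\bfG\colon\NN\to\NN$, but we read it as $\bfG\colon\NN\to\G$ of diagonal type. That is how the lemma is applied in Proposition~\ref{prop:expression_G}, with $\bfG(r)=r^{2g}\sum_{k|r} J_{2g-2b}(k)T_k$ after dividing by $r^{2g}$ as in the previous proof.
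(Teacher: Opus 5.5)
Your proof is correct, but it takes a different route from the paper's.

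\textbf{The paper's route.} The paper argues structurally inside the $\NN$-module formalism of \cite{blomme2025multiple}. It realizes $\bfG_\delta=\iota_*\widetilde{\bfG}$ as the push-forward of $\widetilde{\bfG}\colon r\mapsto T_\delta\cdot\bfG(\delta r)$ from the $\delta$-dilated module $\NN_\delta$ along $\iota\colon\NN_\delta\hookrightarrow\NN$. It then invokes three lemmas of \cite{blomme2025multiple}:
\begin{itemize}
\item push-forward preserves the MCF;
\item on a dilated module one may apply $\m{\delta}$, which turns $T_\delta$ into the unit, and check the MCF with trivial norm;
\item $r\mapsto\m{\delta}\bfG(\delta r)$ satisfies the MCF.
\end{itemize}

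\textbf{Your route.} You compute directly in the basis $(T_m)$. Your only input is the idempotent rule $T_aT_b=T_{\mathrm{lcm}(a,b)}$, and you obtain the explicit primitive coefficients $\Prim_m(\bfG_\delta(m))=\sum_{\mathrm{lcm}(\delta,l)=m}\Prim_l(\bfG(l))$ for $\delta|m$.

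\textbf{Comparison.} Your argument is self-contained. It also makes explicit the identity $T_{\omega_1}T_{\omega_2}=T_{\mathrm{lcm}(\omega_1,\omega_2)}$, which the paper uses tacitly when it applies the lemma with $\delta=\mathrm{lcm}(\omega_1,\omega_2)$ in Proposition~\ref{prop:expression_G}. The paper's argument is shorter and fits the general functoriality of MCFs (push-forward along $\NN$-module morphisms, rescaling by $\m{\delta}$), but it rests on those external lemmas.

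\textbf{Minor points.}
\begin{itemize}
\item Reading off ``the'' $T_m$-coefficient uses linear independence of $(T_k)_{k|m}$. This is standard and already implicit in the definition of $\Prim$.
\item Your sentence on divisibility conditions is literally false as written. It should say: given $\delta|r$ and $m=\mathrm{lcm}(\delta,l)$, the condition $l|r$ is equivalent to $m|r$.
\item Your reading of $\bfG\colon\NN\to\NN$ as a typo for $\bfG\colon\NN\to\G$ is the right one.
\end{itemize}
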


\begin{proof}
    We consider the $\delta$-dilated $\NN$-module $\NN_\delta=\NN$ with $|1|=\delta$ and the diagonal type function $\widetilde{\bfG}\colon r\mapsto T_\delta\cdot\bfG(\delta r)$. Let $\iota\colon\NN_\delta\hookrightarrow\NN$ be the multiplication by $\delta$, where $\NN$ is the trivial $\NN$-module, where $|1|=1$, so that $\iota$ is a morphism of $\NN$-modules. Notice that $\bfG_\delta$ is the push-forward of $\widetilde{\bfG}$ since the push-forward just extends by $0$ the value for non-multiples of $\delta$:
    $$\bfG_\delta(r)=\iota_*\widetilde{\bfG}(r).$$
    Therefore, to prove the MCF, by \cite[Lemma 2.22]{blomme2025multiple}, it suffices to prove the MCF for $\widetilde{\bfG}$ on $\NN_\delta$. By \cite[Lemma 2.19]{blomme2025multiple}, we can apply $\m{\delta}$ to check the MCF with the trivial norm, for the function $r\mapsto \m{\delta}\bfG(\delta r)$, which is ensured by \cite[Lemma 2.23]{blomme2025multiple}.
\end{proof}

\section{Log-geometry background}\label{sec:loggeometry}
We assume the reader to be already familiar with the standard definitions and results about logarithmic schemes/logarithmic algebraic stacks. In particular, we assume familiarity with the notion of \emph{tropicalization} of a logarithmic algebraic scheme/stack, intended both as a cone stack and as an Artin fan (see \cite{cavalieri2020moduli}  and references). 
We refer for example to \cite{ogus,olsson2003logarithmic,KatoF,cavalieri2020moduli,abramovich2014stable} for details. An account of the logarithmic background sufficient for the scopes of this paper can be found for example in \cite{molchowiselog,marcuswiselog,holmes2025logDR}. 

We furthermore assume some familiarity with the notion of tropical and logarithmic line bundles in the sense of \cite{molchowiselog}.

In order to fix notation we recall some of the previously mentioned background material.

\subsection{Sheaves of groups on logarithmic schemes}
We denote by $\mathrm{LogSch^{\mathrm{fs}}}$ the category of fine and saturated (fs) logarithmic schemes. The logarithmic schemes we consider are all fine and saturated unless otherwise stated and thus we often do not repeat fs explicitly.

 The category of fs-logarithmic schemes $\mathrm{LogSch}^{\mathrm{fs}}$ is fibered over $\mathrm{Sch};$ the projection forgets the logarithmic structure. 
Logarithmic structures on algebraic spaces and algebraic stacks are defined as  sections of this projection; see \cite[Section~5]{olsson2003logarithmic}.

\subsubsection{}  On a logarithmic scheme $(S,M_S)$ we have 
 an exact sequence of sheaves of abelian groups:
\begin{equation}\label{eq:fundamentallogses}
    0\to\mathcal O_S^\ast\to M_S^{\mathrm{gp}}\to\bar{M}_S^{\mathrm{gp}}\to 0 .
\end{equation}
For each local section $\alpha\in \bar M_S^{\mathrm{gp}},$ the fiber of $ M_S^{\mathrm{gp}}\to\bar{M}_S^{\mathrm{gp}}$ is an $\mathcal O_S^\ast$-torsor; we denote by $\mathcal O_S(\alpha)$ the associated line bundle.  


 
\subsubsection{}  We denote by 
\[\Gmlog,\Gmtrop\colon\operatorname{LogSch^{fs}}\to\mathrm{AbGrp},\]

the abelian group objects  whose value at $(S, M_S)$ are respectively  
\[H^0(S, M_S^{\gp}) \text{ and } H^0(S, \overline{M}_S^{\gp}).\]

As shown for example in \cite[Section~2]{molchowiselog}, these are \emph{not} representable by an algebraic stack with logarithmic structure, but we will see below that they admit a log \'etale map from a scheme (algebraic stack respectively) with log structure.
\begin{Notation}
    From now on we simply write $S$ rather than $(S,M_S)$ for a logarithmic scheme unless we need to remark the choice of logarithmic structure. 
\end{Notation}


\subsection{Artin fans, tropicalization and logarithmic modifications}

To any finitely generated, integral and saturated monoid $P$ is associated a dual rational polyhedral cone $\sigma$. We denote by $U_{\sigma}$ the corresponding affine toric variety and by 

\[\mathcal A_{\sigma}:=[U_{\sigma}/T_{\sigma}],\]

the corresponding (so called) \emph{Artin cone}; see \cite{abramovich2014comparison, cavalieri2020moduli} and references therein. 

For $S$ an fs log scheme, choosing charts in the sense of \cite[II.2]{ogus} we get, \'etale locally, morphisms to Artin cones; these glue together to a morphism to an algebraic stack with logarithmic structure, in fact an Artin fan 
\[t\colon S\to\mathcal A_{\Sigma_S},\] 
which we call \emph{tropicalization} of $S$. 

The stack $\mathcal A_{\Sigma_S}$ is an \emph{Artin fan} in the sense of \cite{abramovich2014comparison, cavalieri2020moduli,olsson2003logarithmic}; namely an algebraic stack with logarithmic structure which an \'etale cover given by Artin cones. 
By \cite{cavalieri2020moduli}, the category of Artin fans is equivalent to the category of \emph{cone stacks}, these are a generalization  of cone complexes where multiple face maps between two cones are allowed and self maps other than the identity are allowed.

Because of this result, in what follows we  \emph{do not distinguish}
the algebraic stack $\mathcal A_{\Sigma_S}$ and the combinatorial object
$\Sigma_S$ and \emph{refer to both as the tropicalization } of $S$ and often write 
\[S\to\Sigma\]
for a logarithmic map from $S$ to the Artin fan associated with the cone stack $\Sigma.$
\begin{rem}
We defined the tropicalization starting from charts of the logarithmic scheme as described above. We refer to \cite[Remark 5]{holmes2025logDR} and references therein for a discussion about this dependence.
\end{rem}

\subsubsection{} Given a cone stack $\Sigma$ we say that $\widetilde{\Sigma}\to\Sigma$ is a subdivision if for each cone $\sigma\to\Sigma$ we have that $\widetilde{\Sigma}\times_{\Sigma}\sigma\to\sigma$ is a subdivision in the classical sense of toric geometry (see for example \cite{cox2024toric}).

We say that a morphism of logarithmic schemes $\widetilde S\xrightarrow{p} S$ is a \emph{subdivision} or \emph{logarithmic modification} if 
$\widetilde S=S\times_{\Sigma }\widetilde{\Sigma}.$ We also say that $\widetilde S\xrightarrow{p} S$ is a logarithmic modification if $\widetilde S$ is obtained  from $S$ by a root construction, see for example \cite[Section~2]{holmes2023root}. Locally at the level of cones, this correspond to a refinement of the lattice structure.

\medskip
A logarithmic modification $p$ has the following properties (see \cite[Lemma~13]{holmes2025logDR}) and references therein:
\begin{itemize}[leftmargin=0.4cm]
    \item $p$ is proper and representable (representable by a Deligne-Mumford stack in the case of roots)  and it is a \emph{monomorphism of logarithmic algebraic stacks};
    \item $p$ is \emph{logarithmically \'etale} (see for example \cite{ogus} for definition and characterization of log \'etale and log smooth morphisms);
    \item if $S$ is logarithmically smooth and thus $S\to \Sigma_S$ is smooth in the usual sense, then $p$ is birational.
\end{itemize}
    \begin{rem}
Though not representable, $\Gmlog$ and $\Gmtrop$ admit subdivisions which are representable by logarithmic algebraic stacks with log structure. Explicitly, $\mathbb P^1\to\Gmlog$ and $[\PP^1/\mathbb G_m]\to\Gmtrop$ are such subdivisions.
(See \cite[Section~2]{molchowiselog}).
    \end{rem}

\subsection{Logarithmic Chow ring for a log smooth algebraic stacks}

Given $S$ a logarithmic smooth scheme or algebraic stack, one defines as in \cite{holmes2022intersection} (see also \cite{molcho2023hodge,pandharipande2025logarithmic} and \cite[Section~1.3,1.4,2.5]{holmes2025logDR}) the log-Chow ring:
\[\mathrm{logCH}(S)=\varinjlim_{\widetilde S\to S} \mathrm{CH}^*_{\mathrm{op}}(\widetilde S,\mathbb Q).\]
There exists a natural injective ring homomorphism $ \mathrm{CH}^*_{\mathrm{op}}( S)\hookrightarrow \mathrm{logCH}(S)$ as well as a \emph{group homomorphism}
$\mathrm{logCH}(S)\to \mathrm{CH}^*_{\mathrm{op}}( S)$ defined via proper push-forward.

\medskip

A \emph{piece-wise polynomial} $f\in\mathrm{PP(\Sigma)}$ on a cone stack $\Sigma$ is a continuous function $f\colon |\Sigma|\to\mathbb R$ (where $|\Sigma|$ denotes the support of $\Sigma$) such that there exists a subdivision $\widetilde{\Sigma}\to\Sigma$ with $f$ polynomial on each cone of $\widetilde \Sigma.$ If no subdivision is necessary we say that $f$ is strict and write $f\in \mathrm{sPP(\Sigma)}.$

As explained in \cite{holmes2022intersection,molcho2023hodge}, if $\widetilde\Sigma$ is the tropicalization of a log smooth scheme $\widetilde S$, there is a ring homomorphism
\[\Phi\colon \mathrm{sPP(\widetilde \Sigma)}\to \mathrm{CH}^*_{\mathrm{op}}(\widetilde S,\mathbb Q),\]
inducing a ring homomorphism
\[\Phi\colon \mathrm{PP(\Sigma)}\to \mathrm{logCH}(S).\]
\begin{rem}
\label{rem:toricpp}
When $S=X_\Sigma$ is a smooth toric variety with fan $\Sigma$, the ring homomorphism $\Phi:\mathrm{sPP(\Sigma)}\to \mathrm{CH}^*_{\mathrm{op}}(S,\mathbb Q)$ is surjective. A proof of this fact is outlined in \cite{katz2008piecewise}[following Theorem 1.4].
\end{rem}
  \subsection{Log smooth curves and their tropicalization}
  \label{subsec:logcurve}

A log curve over $S$ is a proper, integral and log-smooth morphism $\pi\colon C\to S$ with connected, reduced one-dimensional (geometric) fibers. We refer to \cite{KatoF} for a complete characterization of how the logarithmic morphism $C\xrightarrow{\pi} S$ looks like \'etale locally.

 \medskip
 

If there are no markings with logarithmic structure we will say that $C\to S$ is \emph{vertical} (see \cite[Definition~1.2.8]{ogus}). Given any log curve $C\to S$ it is possible to consider the associated \emph{vertical log curve} has the same underlying scheme as $C$ but different log structure $M_C^V\subseteq M_C$ as in \cite[Definition~1.2.8]{ogus}.

\medskip

\paragraph{\bf{Warning}} 
We will often be interested in smooth logarithmic curves $C\to S$ with markings $p_1,\dots,p_n\colon S\to C$ carrying non trivial logarithmic structure. Nonetheless, when we talk about the logarithmic Picard group $\LogPic_{ C/S}$ we always consider the curve with its \emph{vertical log structure}.
We will not remark this further in what follows.

 \medskip

Given $C\to S$ we denote by $\Gamma\to S$ its tropicalization. This can be thought of as follows: for each  geometric point $s\in S$ a metrization of $\Gamma_s$ (the dual complex of the curve underlying  $C_s$) with values in $\bar{M}_{S,s}$; for each geometric specialization $t\rightsquigarrow  s$ an edge contraction $\Gamma_s\to\Gamma_t$ where the contracted edges correspond to the  nodes of $C_s$ smoothed in $C_t$ and  are precisely the edges whose length $\ell(e)\in \bar{M}_{S,s}$ goes to zero under the morphism $\bar{M}_{S,s}\to \bar{M}_{S,t}.$

 
\medskip

 Let $\Gamma$ be a tropical curve metrized in a monoid $\bar{M}_S$ (e.g. if we are in the constant degeneration case). In this case the curve can be endowed with the \emph{tropical topology} (as defined in \cite[Section~3]{molchowiselog}) whose fundamental opens are the stars of the vertices and two such opens intersect  in a disjoint union of open intervals.

In this topology: 
     \begin{itemize}[leftmargin=0.4cm]
 \item $\mathcal{PL}$ denotes the sheaf of strict piecewise linear functions whose sections over an open set $U$ are the data of: a function $\alpha$ on the vertices of $U$ with values in $\bar{M}_S^{\gp}$  a collection $(w_e)$ of slopes for edges in $U$ such that $\frac{\alpha(v)-\alpha(w)}{\ell(e)}=w_e$ if $v,w,e\in U$,
  \item  $\bar{M}_{S}^\gp$ denotes the constant sheaf on $\Gamma$,
        \item $\B\L\hookrightarrow\mathcal P\mathcal L$ denotes the subsheaf of 
\emph{balanced} strict piece-wise linear function, i.e the sum of the outgoing slopes at each vertex is zero.
    \end{itemize}

These sheaves fit together (see \cite[Section~3.4]{molchowiselog}) in the following commutative diagram:

\begin{equation}\label{eq:tropicalsheaves}
    \begin{tikzcd}
& & 0\ar[d] & 0\ar[d]\\
0\ar[r] & \bar{M}_{S,s}^{\gp}\ar[d,"="]\ar[r]&\B\L\ar[d]\ar[r] &\mathcal H\ar[d]\ar[r] & 0\\
0\ar[r] & \bar{M}_{S,s}^{\gp}\ar[r]&\mathcal P\mathcal L\ar[d]\ar[r] &\mathcal E\ar[d]\ar[r] & 0,\\
& & \mathcal V\ar[r,"="] \ar[d]& \mathcal V\ar[d]\\
& & 0 & 0
\end{tikzcd}
\end{equation}
where $\mathcal E$ and $\mathcal V$ are the constant groups sheaves freely generated by the edges and the vertices respectively and the map between them is the homology boundary map.
 These sequences induce long exact sequence in cohomology, meaning Čech cohomology with respect to an acyclic cover in the tropical topology. From \cite[Section~3.4]{molchowiselog} we have that:
\begin{itemize}[leftmargin=0.4cm]
\item For  $\mathcal{PL}$ we have, simply unraveling definitions:
\[H^0(\Gamma,\mathcal P\mathcal L)=H^0(C,\bar{M}_C^\gp),\qquad H^1(\Gamma,\mathcal P\mathcal L)=H^1(C,\bar{M}_C^\gp).\]
\item  $H^0(\Gamma,\mathcal V)$ is the set of divisors supported on the vertices of $\Gamma$. 
\item $H^i(\Gamma,\mathcal V)$ and $H^i(\Gamma,\mathcal E)$ are zero for $i>0;$ 
\item  $H^0(\Gamma,\mathcal H)$ is identified with the first homology group $H_1(\Gamma,\mathbb Z)$ of the graph underlying $\Gamma$ and  $H^1(\Gamma,\mathcal H)$ with $H_0(\Gamma,\mathbb Z);$
\item there is a natural isomorphism 
\[H^1(\Gamma,\bar{M}_{S}^{\gp})\cong\operatorname{Hom}(H_1(\Gamma,\mathbb Z),\bar{M}_{S}^{\gp}).\]
\end{itemize}
If $\Gamma\xrightarrow{\pi} S$ is a tropical curve over a general fs logarithmic scheme, we can still define these sheaves and the maps between them by  considering the definitions we just gave on geometric points and imposing compatibility along the maps induced by specialization $t\rightsquigarrow s$ (see \cite[Section~3.7]{molchowiselog} ).


\subsection{Logarithmic and Tropical Picard group for log smooth curves}\label{sec:logpic}

In \cite[Section~3]{molchowiselog} the  authors define the tropical Jacobian  as a group object
 \[\mathrm{TropPic}^0_{\Gamma/S}\colon \mathrm{LogSch^{fs}_S}\to \mathrm{AbGrp},\]
on the category of fs log schemes.

     For $(\Spec k, Q)\to S$ a log point
     \[\mathrm{TropPic}^0_{\Gamma/S}((\Spec k, Q))=\frac{\mathrm{Hom}(H_1(\Gamma_{k},\ZZ),Q^\mathrm{gp})^\dagger}{H_1(\Gamma_{k},\mathbb Z)},\]
     where the $\dagger$ indicates the restriction to \emph{bounded monodromy} homomorphisms as defined in \cite{molchowiselog} (see also \cite{kajiwara2008logabelian2}). The latter is a technical condition to ensure algebraization of deformations of tropical line bundle, the object parametrized by the tropical Jacobian. 

         When $Q=\mathbb R_{\geq 0}$ the $\mathrm{TropPic}^0_{\Gamma/S}((\Spec k, Q))$ is a real torus of dimension $b_1(\Gamma)$ and it is in fact precisely the tropical Jacobian of a classical tropical curve as defined in \cite{mikhalkin2008tropical}.

     \subsubsection{} From the  long exact sequences induced by the diagram of sheaves \eqref{eq:tropicalsheaves}, we have that:
\[\mathrm{TropPic}^0_{\Gamma/S}((\Spec k, Q))= \mathrm{ker}( H^1(\Gamma,\mathcal B\mathcal L)\xrightarrow{\deg}H_0(\Gamma,\mathbb Z)\cong\mathbb Z);\]
in particular we can define the torsors $\mathrm{TropPic}^d_{\Gamma/S}((\Spec k, Q))$ for each $d\in\mathbb Z.$
     
     \subsubsection{}  $\mathrm{TropPic}^d_{\Gamma/S}$ admits a logarithmically smooth cover by an algebraic stack with logarithmic structure. In fact,  $\mathrm{TropPic}^d_{\Gamma/S}$ can be thought of as the limit over all subdivisions $\widetilde{C}\to C\times_S\widetilde S$ for $\widetilde{S}\to S$ a logarithmic modification or root of $S$ of rational equivalence classes of degree $d$ divisors on  the tropical curve:
     \[\mathrm{Div}^d(\widetilde{\Gamma}/\widetilde{S})/\mathrm{sPL}(\widetilde{\Gamma}),\]
     where a divisor on $\Gamma_s$ is a formal linear combination $\sum_{v\in V(\Gamma_s)} a_v [v]$ with $a_v\in\mathbb Z;$  for $\Gamma/S$ we have as usual such a data on each logarithmic stratum plus the compatibility along generalization maps.

     This means in particular that any \emph{tropical line bundle} can be represented by a degree $d$ divisor on some subdivision of $\Gamma/S.$

\subsubsection{} Let $C\xrightarrow{\pi}S$ be a proper, vertical, log curve. A \emph{logarithmic line bundle}
on $C$ is a $\Gmlog$-torsor on $C$ in the strict \'etale topology whose fibers over $S$ have bounded monodromy 
\cite{molchowiselog}, \cite{kajiwara2008logabelian2}.
     The logarithmic Picard group of $C\xrightarrow{\pi} S$ is the sheaf of  abelian groups over log schemes parametrizing isomorphism classes of logarithmic line bundles, i.e:
\[\LogPic_{C/S}( T)= (R^1\pi_* M_{C_T}^{\gp})^\dagger,\] 
the sub-sheaf of $R^1\pi_* M_{C_T}^{\gp}$  where bounded monodromy is satisfied. The latter is in fact a condition on the induced $\bar{M}_{C_T}^{\gp}$-torsor.

\subsubsection{} Given $C\to(\Spec k, Q)$ there is a well-defined degree map 
\[\deg\colon\LogPic_C \to\ZZ.\]
Over a  more general logarithmic  scheme the degree is locally constant as proved in \cite[Proposition~4.5.2]{molchowiselog}.
We then define the logarithmic Jacobian to be the kernel of the degree morphism. As proved in \cite[Section~4]{molchowiselog} there is a short exact sequence of sheaves of groups over $\mathrm{LogSch^{fs}}:$

\[0\to \Pic^{[0]}_{C\slash S}\to \LogPic^0_{ C/S}\to\mathrm{TroPic}^0_{C/S}\to 0,\]
and $\LogPic^0_{ C/S}$ is a logarithmic abelian variety in the sense of \cite{kajiwara2008logabelian2}.

\subsubsection{} As explained in \cite[Section~4.3]{molchowiselog}, a  $M_C^\gp$ -torsor $L$ has bounded monodromy along the fibers if and only if there is a logarithmic modification (log blowups and root construction) $\widetilde{S}\to S$ possibly  followed by a subdivision
    $\widetilde{C}\to C\times_{S}\widetilde{S} $ such that the pull-back $\widetilde{L}$  on $\widetilde{C}$ is represented by a line bundle, i.e. $\widetilde{L}$ is in the image  of
    \[\Pic(\widetilde{C}\slash\widetilde{S})=H^0(\widetilde{S},R^1\pi_*\mathcal O_{\widetilde{C}}^{\times})\to \LogPic_{\widetilde{C}\slash\widetilde{S}}(\widetilde{S}), \]
    or more precisely
    \[\Pic(\widetilde{C}\slash\widetilde{S})=H^0(\widetilde{S},R^1\pi_*\mathcal O_{\widetilde{C}}^{\times})\to \LogPic_{C/S}(\widetilde{S})\cong\LogPic_{\widetilde{C}\slash\widetilde{S}}(\widetilde{S})\subseteq H^0(\widetilde{S},R^1\pi_*M_{\widetilde{C}}^{\mathrm{gp}}). \]
    The map is the natural one in by the long exact sequence induced by \eqref{eq:fundamentallogses}.

    \subsubsection{}\label{sec:fixmultidegree} The model over which $L$ can be represented by a line bundle can be understood tropically: given $L$
 we have an associated $\mathrm{Trop}(L)\in \mathrm{TroPic}_{\Gamma/S}\cong (R^1\pi_*\B\L)^\dagger$ given as explained in \cite[Section~4.1]{molchowiselog}. By construction, $\mathrm{Trop}(L)$ is a lift of the $\bar{M}^{\mathrm{gp}}_C$ torsor $\bar{L}\in (R^1\pi_*\mathcal P\mathcal L)^\dagger\cong (R^1\pi_*\bar{M}^{\mathrm{gp}}_C)^\dagger$ associated to $L.$

    From the long exact sequences induced by diagram~\eqref{eq:tropicalsheaves} we see that the latter is zero only if there exists a divisor $D$ supported on the vertices of $\Gamma$ such that $\mathrm{Trop}(L)=[D]$ under the coboundary map $H^0(\Gamma,\mathcal V)=\mathrm{Div}(\Gamma/S)\to H^1(\Gamma,\B\L).$

Let $\widetilde{\Gamma}/\widetilde{S}$ be the modification over which $\mathrm{Trop}(L)$ is represented by $D$ and let $\widetilde{C}\to \widetilde{S}$ the induced log curve. 

It follows  from the exact sequence
\[H^0(\widetilde{S},\pi_* \bar{M}_{\widetilde{C}}^{\gp,\dagger})/\bar{M}_S^\mathrm{gp}\to  \Pic_{\widetilde{C}/ \widetilde{S}}(\widetilde{S})\to  \LogPic_{\widetilde{C}/ \widetilde{S}}(\widetilde{S})\to  H^0(\widetilde{S}, (R^1\pi_*\bar{M}_{\widetilde{C}}^\mathrm{gp})^{\dagger})\]
that once $D$ is fixed there is a unique line bundle $L_D$, up to line bundles pulled-back from the base, representing the log line bundle $L.$



\subsubsection{} By \cite[Corollary 4.11.4]{molchowiselog}, $\LogPic^0_{ C/S}$ admits a logarithmically smooth cover by a logarithmic smooth scheme.  More concretely, given a subdivision $\mathfrak P^{\sigma}\to \mathrm{TroPic}^0_{C/S}$ such that $\mathfrak P^{\sigma}$ is represented by a cone stack in the sense of \cite{cavalieri2020moduli}, we have that the induce subdivision $\P^{\sigma}\to\LogPic^0_{ C/S}$ where $\P^{\sigma}$ is a an algebraic stack with log structure which is furthermore log smooth. We call $\P^{\sigma}$ a \emph{model} for the logarithmic Jacobian.

One most important examples of $\P^{\sigma}$ to keep in mind are the ones coming from \emph{fine compatified Jacobians} with respect to \emph{small, non degenerate stability conditions} \cite{melo2015compactifications}, \cite[Section~4]{holmes2025logDR} and references therein.

\subsection{Logarithmic and tropical roots of a line bundle}\label{sec:logroots}
We conclude this section recalling some results about compactification of moduli of roots of a line bundle on a family of pre-stable curves. The material is mostly taken from \cite{holmes2023root}, but we use some results from \cite{abreu2023moduli} to describe suitable models on which the universal logarithmic root is representable.

\subsubsection{Moduli of log-roots} Let $S$ an algebraic stack with logarithmic structure, $C\to S$ be a log smooth curve, and let $\mathcal L$ a line bundle $C$ of degree $d$ along the fibers. Given $r \;|\; d$ we 
 denote by $ S(\mathcal L^{1/r})$ the moduli space of log-$r$-roots of $\mathcal L$. This is constructed, as explained in \cite{holmes2023root}, as the fiber product in the category of stacks over log schemes:
\bcd
 S(\mathcal L^{1/r}) \ar[r]\ar[d]  &\LogPic_{C/S} \ar[d,"r\cdot"] \\
S  \ar[r,"\mathcal L"] & \LogPic_{C/S},
\ecd
where $r\cdot$ denotes the multiplication map. 

\medskip

 The stack $S(\mathcal L^{1/r})$ comes equipped with a universal curve $C\times_S S(\mathcal L^{1/r})$ and universal log-line bundle $\mathcal T$ such that
\[\mathcal T^{\otimes r}\equiv \mathcal L \in \LogPic_{C/S}( S(\mathcal L^{1/r})),\]
where $\equiv$ here and in what follows will denote the equivalence in the sense of log-line bundles.
 
\begin{prop}\cite[Corollary~3.12,Section~4.1.3]{holmes2023root}
   The moduli space  $S(\mathcal L^{1/r})$  is a proper, finite, flat scheme over $S$ of degree $r^{2g};$ if $r$ is invertible on $S$ then is log \'etale (but not \'etale in general) over $S.$ 
\end{prop}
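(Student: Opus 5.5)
The statement is the last Proposition (properness, finiteness, flatness, degree $r^{2g}$, and log étaleness of $S(\L^{1/r})$). My plan is to deduce everything from the cartesian square defining $S(\L^{1/r})$, by showing that multiplication by $r$ on $\LogPic_{C/S}$ has these properties.

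First I will use the short exact sequence
\[0\to \Pic^{[0]}_{C/S}\to \LogPic^0_{C/S}\to \mathrm{TroPic}^0_{C/S}\to 0\]
and apply the snake lemma to multiplication by $r$. Since $r\mid d$, translating by a fixed log line bundle of degree $d/r$ (which exists étale locally) reduces the question to the degree-zero part, so the root stack becomes a torsor under the $r$-torsion $\LogPic^0_{C/S}[r]$. Fiberwise over a log point with dual graph $\Gamma$ and $b=b_1(\Gamma)$, the torsion is an extension:
\begin{itemize}
\item $\Pic^{[0]}[r]$ is an extension of $\Pic^0_{\widetilde C}[r]\cong \ZZ_r^{2g-2b}$ by the toric part $H^1(\Gamma,\mu_r)\cong \ZZ_r^{b}$;
\item $\mathrm{TroPic}^0[r]$ is $\frac{1}{r}H_1(\Gamma,\ZZ)/H_1(\Gamma,\ZZ)\cong \ZZ_r^{b}$.
\end{itemize}
Multiplication by $r$ is surjective on $\mathrm{TroPic}^0$, because it is divisible: the bounded monodromy condition is stable under division by $r$ after passing to the root construction on the base. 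On the $\Pic^{[0]}$ side, surjectivity holds in the étale/Kummer topology. The snake lemma then gives that $\LogPic^0[r]$ has order $r^{2g-2b}\cdot r^b\cdot r^b=r^{2g}$. This matches Example~\ref{ex:troPic}.

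Next comes representability and properness, which I expect to be the main obstacle. The tropical torsion points are not parametrized by $S$ itself but only after a root construction on $S$ (lengths divided by $r$), so $S(\L^{1/r})\to S$ is representable only up to such a modification. I would follow \cite{holmes2023root}. Proper means: use the valuative criterion via the properness of models of $\LogPic$ (e.g. fine compactified Jacobians), or directly extend torsion points over a DVR using the snake lemma description. Finite means: the fibers are finite of cardinality $r^{2g}$ counted with multiplicity, since each tropical $r$-torsion point contributes with the multiplicity coming from the root stack structure. Flatness then follows from $S$ being log smooth hence normal-ish, together with miracle flatness in the Cohen–Macaulay setting, combined with the constancy of the degree $r^{2g}$ computed above.

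Finally, when $r$ is invertible, I will show log étaleness. The map $r\cdot$ on $\Gmlog$ is Kummer log étale, and hence multiplication by $r$ on $\LogPic$ is log étale. Log étaleness is preserved under the base change in the defining square. It fails to be étale precisely because of the Kummer root structure along the boundary, where the tropical torsion lives.
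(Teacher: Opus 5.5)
The paper gives no argument of its own here; it quotes the statement from \cite{holmes2023root}. Your deferral of representability and properness to that reference is therefore in line with the paper. Your overall picture is also right: the root space is a torsor, in the log \'etale topology, under $\LogPic^0_{C/S}[r]$, that group is an extension of $\mathrm{TroPic}^0$ by $\Pic^{[0]}$, and the tropical part is Kummer in nature.

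The gap is in the two properties you try to prove yourself, finiteness and flatness. Your snake-lemma count $r^{2g-2b}\cdot r^b\cdot r^b$ counts $T$-points of $\LogPic^0[r]$ for log points $T$ whose monoid is sufficiently $r$-divisible. It says nothing directly about the underlying map over a point of $S$. For instance, suppose a fiber has a single non-separating node whose length generates $\overline{M}_{S,s}=\NN$. Then $\mathrm{TroPic}^0[r]$ vanishes over that log point, and only $r^{2g-1}$ roots are defined there. The remaining roots live on components which, \'etale locally, are Kummer covers extracting $r$-th roots of cycle lengths, e.g.\ $\Spec \O_S[s]/(s^r-t)$ for a class of order $r$, totally ramified along the boundary. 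Your phrase ``each tropical torsion point contributes with the multiplicity coming from the root stack structure'' is exactly this local description. That description is what gives finiteness of the underlying map and the bookkeeping of fiber lengths, and your proposal does not supply it.

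The flatness step does not work as stated either. Miracle flatness needs a \emph{regular} target and a Cohen--Macaulay source. A log smooth $S$ is only toroidal, and over a singular toroidal base an fs Kummer base change can genuinely fail to be flat. Its graded pieces over $\O_S$ are divisorial modules, and these are not free when their class in the local class group is non-trivial. So regularity of $S$ is essential; it holds for $\overline{\M}_{g,n}$ and for the smooth models used later in the paper. You also never show that the source is Cohen--Macaulay. That follows from log smoothness of $S(\L^{1/r})$, which comes from log \'etaleness and hence needs $r$ invertible, combined with Kato's theorem that log regular schemes are Cohen--Macaulay. ``Constancy of the degree'' is not an input to miracle flatness. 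Nor could your count feed the constant-fiber-rank criterion, since it counts log points rather than lengths of scheme-theoretic fibers.

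A correct version of your route is as follows. The map is finite (quasi-finite and proper), the source is Cohen--Macaulay and the target is regular, hence the map is flat. The degree is then locally constant and can be read off at a generic point, where it is the classical $|\Pic^0(C_\eta)[r]|=r^{2g}$; this makes the snake-lemma count unnecessary.

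Finally, log \'etaleness of $[r]$ on $\LogPic$ is not a formal consequence of Kummer \'etaleness of $r\cdot$ on $\Gmlog$. You need to argue through the extension: $[r]$ is \'etale on the semi-abelian $\Pic^{[0]}$ when $r$ is invertible, and Kummer log \'etale on $\mathrm{TroPic}^0$. This should be done after establishing relative representability.
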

Notice over the locus where $C\to S$ is smooth, the map $S(\mathcal L^{1/r})\to S $ is \'etale; for $\mathcal L=\mathcal O_C$  we have that $S(\mathcal O_C^{1/r})$ is a group scheme over $S$ and more generally $S(\mathcal L^{1/r})$ is a torsor under the latter.
By considering the base change along a root stack $\widetilde{S}\to S$ which we now explain, we can regain these properties. 
\subsubsection{Root stack construction.}
 In the notation of \cite{chiodo2008stable}, $D_{i,(A|B)}$ the boundary divisor in $\mathfrak{M}_{g,n}$ of curves with one separating node where:  the genus splits as $i, g-i;$ the subsets $A, B \subseteq \left\{1,\dots,n\right\}$ prescribe the distribution of the markings on the two components and let $D_{\mathrm{irr}}$ the boundary divisor of curves with a non separating node. Consider the root-stack (see for example \cite{abramovich2008gromov,cadman2007using} and also \cite[Section~2]{chiodo2008stable}:
\[\widetilde{\mathfrak{M}}_{g,n}=\mathfrak{M}_{g,n}\left[\sum_{i,A,B} D_{i,(A|B)}/r +D_{\mathrm{irr}}\slash r \right].\]
The latter  admits a modular interpretation: it is the moduli stack $\mathfrak{M}_{g,n}(\vec{r})$ of \emph{twisted} pre-stable curves in the sense of \cite{chiodo2008stable} with stabilizer of order $r$ at the nodes \cite[Theorem~4.5]{chiodo2008stable}.

\begin{rem}\label{rem:rootstackmonoids}
The root construction can  also be described in terms of
 logarithmic structures, as done in \cite[Section~2]{holmes2023root}.  At the level of monoids,  the upshot of the root  construction is the following: given closed point $s$, the monoid $\overline{M}_{\widetilde{\mathfrak{M}}_{g,n},s}$ is the saturation of the image of $\overline{M}_{\mathfrak{M}_{g,n},s}$ inside $(\mathbb Z^{|E(\Gamma_s)|}\oplus \mathbb Z^{|E(\Gamma_s)|})/(\ell_e-r\ell'_{e})$, i.e.
 $\overline{M}_{\widetilde{\mathfrak{M}}_{g,n},s}=\frac{1}{r} \overline{M}_{\mathfrak{M}_{g,n},s}.$

\medskip

    Holmes and Orecchia \cite{holmes2023root} consider a considerably more  efficient root stack which only extracts roots for those boundary components of $\mathfrak{M}_{g,n}$ corresponding to non separating nodes, namely for the components $\mathfrak{M}_{\Gamma}$ for which $b_1(\Gamma)\neq 0;$ even for those the roots are not extracted separately for each non separating edge. 
\end{rem}

There are three universal curves on $\widetilde{\mathfrak{M}}_{g,n}:$ 
\begin{itemize}
    \item the universal twisted curve $\mathfrak{C}^{1/r,\mathrm{tw}}$ with stacky nodes \cite{chiodo2008stable}; 
   \item  the corresponding coarse curve $\mathfrak{C}$, which is simply the pull-back from $\mathfrak{M}_{g,n}$ and it has singular total space due to  the base change; 
   \item the resolution $\widetilde{\mathfrak{C}}$ of the latter, whose fibers are obtained inserting a chain of $r-1$ unstable $\PP^1$ at each node.
\end{itemize}

A log smooth $C/S$ determines a log  morphism 
$S\to \mathfrak M_{g,n}$ where the latter is endowed with the natural logarithmic structure defined by the boundary divisor.
Then we define 
\[\widetilde{S}:=S\times_{\mathfrak M_{g,n}}\widetilde{\mathfrak M}_{g,n},\]
where the fiber product is in the category of  fs logarithmic algebraic stacks. 

By pull-back, on  $\widetilde{S}$ we have a twisted curve $C^{1/r},$ its coarse moduli space $C=C\times_S\widetilde{S}$ and the resolution of the latter, which we denote by $\widetilde{C}.$

If $\Gamma$ and $\widetilde{\Gamma}$ denote the tropicalization of $C_s,$ $\widetilde{C}_s$ respectively on some geometric point $s\in\widetilde{S},$ then $\widetilde{\Gamma}$ is obtained from $\Gamma$ subdividing each edge $e\in E(\Gamma)$ is $r$ edges $e_r\in E(\widetilde{\Gamma})$ of length $\ell(e_r)=\frac{\ell(e)}{r}.$ 
We have the following:
\begin{theo}\cite[Theorem~4.3]{holmes2023root},\cite[Section~3.1]{chiodo2008stable}\label{thm:logrotsafterbasechange}
\begin{itemize}
    \item There is a strict logarithmic morphism $\widetilde{S}(\mathcal L^{1/r}):=S(\mathcal L^{1/r})\times_S\widetilde{S}\to \widetilde{S}$, where the fiber product is in the category of fs logarithmic schemes.
    \item The underlying stack of $\widetilde{S}(\mathcal O^{1/r})$ is a finite flat group scheme over $\widetilde{S},$ \'etale if $r$ is invertible on $S$.
    \item The underlying stack of $\widetilde{S}(\mathcal L^{1/r})$ is a finite \'etale torsor under $\widetilde{S}(\mathcal O^{1/r})$ over $\widetilde{S},$ for $r$ is invertible on $S$.
\end{itemize}
\end{theo}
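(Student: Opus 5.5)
The plan is to reduce the three assertions to local statements over the root stack $\widetilde{\mathfrak M}_{g,n}$ and to deduce them from the results of \cite{holmes2023root} and \cite{chiodo2008stable}. Since $\widetilde S$ is defined as the fs fiber product $S\times_{\mathfrak M_{g,n}}\widetilde{\mathfrak M}_{g,n}$, and $S(\mathcal L^{1/r})$ is defined as the fiber product of $S\xrightarrow{\mathcal L}\LogPic_{C/S}$ with the multiplication $r\cdot$, it suffices to understand the multiplication by $r$ on $\LogPic^0$ after pulling back to $\widetilde S$. I will treat $\mathcal O$ first (the group $S(\mathcal O^{1/r})$) and then deduce the torsor statement for general $\mathcal L$.

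\textbf{Strictness.} Work at a geometric point $s$ with tropical curve $\Gamma_s$. A log $r$-root of the trivial bundle tropicalizes to an $r$-torsion element of $\mathrm{TroPic}^0$, namely a homomorphism $H_1(\Gamma_s,\ZZ)\to \bar M^{\gp}$ defined up to $H_1$, divided by $r$. Representing it by a divisor as in \S\ref{sec:fixmultidegree} requires vertex positions at fractions $\ell(e)/r$ along the edges. This is exactly what the root construction provides: $\bar M_{\widetilde S,s}=\frac1r\bar M_{S,s}$ by Remark~\ref{rem:rootstackmonoids}, together with the subdivision $\widetilde\Gamma$ of each edge into $r$ pieces. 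So every tropical $r$-torsion class becomes a divisor class on $\widetilde\Gamma$ with values in the existing monoid $\bar M_{\widetilde S}$. The characteristic monoid of the fiber product therefore equals that of $\widetilde S$, which is strictness. I would verify this chartwise: the fs fiber product of monoids $P\oplus_{P}\frac1r P$ saturates to $\frac1r P$.

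\textbf{Flatness and étaleness.} Once the morphism is strict, $\widetilde S(\mathcal O^{1/r})\to\widetilde S$ is a log étale, strict morphism when $r$ is invertible, hence étale. The degree is $r^{2g}$ fiberwise: $r^{2g-2b}$ comes from $\Pic^0_{\widetilde C_s}[r]$ and $r^{2b}$ from $H^1(\Gamma,\mu_r)\times \mathrm{TroPic}[r]$, following Example~\ref{ex:troPic}. Finiteness and properness are inherited from the proposition quoted above by base change. Alternatively, I would identify $\widetilde S(\mathcal O^{1/r})$ with Chiodo's moduli of $r$-th roots on twisted curves \cite{chiodo2008stable}, which is known to be étale over $\mathfrak M_{g,n}(\vec r)$. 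The group structure is inherited from the group structure on $\LogPic$.

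\textbf{Torsor statement.} The action map $\widetilde S(\mathcal O^{1/r})\times_{\widetilde S}\widetilde S(\mathcal L^{1/r})\to \widetilde S(\mathcal L^{1/r})\times_{\widetilde S}\widetilde S(\mathcal L^{1/r})$ is an isomorphism, since the difference of two roots is a root of $\mathcal O$. Étale-local existence of a root follows from surjectivity of $r\cdot$ on $\LogPic$ after étale localization, together with strictness, which is proved as in the first step since the tropical class of an $r$-th root of $\mathcal L$ again lives on $\widetilde\Gamma$.

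\textbf{Main obstacle.} The hardest step is strictness: one must check that the efficient root of the boundary already captures all tropical roots, including the bounded-monodromy condition, and does so uniformly over specializations. I would handle this cone by cone using the explicit description of $\mathrm{TroPic}$.
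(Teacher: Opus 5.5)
The paper does not prove this statement; it imports it from \cite[Theorem~4.3]{holmes2023root} and \cite[Section~3.1]{chiodo2008stable}. Your outline is the standard route for such results: describe the log structure of the root space through tropical roots, note that these become defined once the $\ell_e/r$ exist, and conclude that strict plus log \'etale gives \'etale. The strategy is sound, but as written several steps do not prove what they claim.

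\textbf{Strictness.} In general $S(\mathcal L^{1/r})\to S$ is log \'etale but not \'etale, so it is \emph{not} strict. At a boundary point its characteristic monoid $Q$ is a proper Kummer extension of $P=\bar M_{S,s}$. Your chart check ``$P\oplus_P\frac1rP$ saturates to $\frac1rP$'' therefore computes the wrong pushout and is tautological.

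What you must show is that the saturation of $Q\oplus_P\widetilde P$, with $\widetilde P=\bar M_{\widetilde S,s}$, has sharpening $\widetilde P$. This holds exactly when $Q\hookrightarrow\widetilde P$ over $P$. In that case the torsion $Q^{\gp}/P^{\gp}$ of the pushout group becomes units after saturation, and this torsion is what splits a stratum of the root space into several strata over $\widetilde S$.

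Getting $Q\subseteq\widetilde P$ needs two inputs you only gesture at:
\begin{enumerate}
\item[(a)] The log structure of $S(\mathcal L^{1/r})$ is pulled back from the stack of tropical roots. The reason is that over a fixed tropical root the log roots form a torsor under $\Pic^{[0]}[r]$, which carries no log structure.
\item[(b)] The tropical $r$-torsion classes $\gamma'\mapsto\frac1r\sum_e\gamma_e\gamma'_e\ell_e$ take values in $\frac1r$ times the span of the edge lengths. Hence $Q^{\gp}\subseteq\widetilde P^{\gp}$, and then $Q\subseteq\widetilde P$ because $Q$ is Kummer over $P$ and $\widetilde P$ is saturated.
\end{enumerate}
Two smaller points. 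First, $\bar M_{\widetilde S,s}=\frac1r\bar M_{S,s}$ fails for arbitrary $S$, since the fs pullback only adjoins the $\ell_e/r$; that is, however, all you need. Second, the theorem concerns the full $r$-th root along all boundary divisors, not the ``efficient'' root mentioned in your last paragraph.

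\textbf{Flatness and existence of roots.} The second bullet asserts finite flatness without assuming $r$ invertible, but you obtain flatness only through \'etaleness. Finiteness does survive fs base change, because the fs fibre product is finite over the ordinary one. Flatness does not, because integralization can cut out a closed substack. So the case of non-invertible $r$ needs a separate argument that your sketch does not supply; it is irrelevant in the paper's setting over $\mathbb C$.

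In the torsor step, multiplication by $r$ on $\LogPic$ is in general not surjective strict-\'etale locally. The tropical part is exactly the obstruction, which is why the root stack is needed at all. Over $\widetilde S$ you should argue as follows:
\begin{itemize}
\item a tropical root of $[\underline{\deg}\,\mathcal L]$ exists, by the flows of Section~\ref{sec:logroots};
\item lift it \'etale locally to a log line bundle $\mathcal T_0$;
\item correct by an $r$-th root of $\mathcal L\otimes\mathcal T_0^{-r}\in\Pic^{[0]}$, which exists \'etale locally because $[r]$ is \'etale surjective on the semiabelian $\Pic^{[0]}$ when $r$ is invertible.
\end{itemize}
Finally, state explicitly that the group law passes to the underlying stack only because of strictness: fs fibre products along strict maps have the ordinary fibre product as underlying stack.
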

Chiodo \cite{chiodo2008stable} interprets $\widetilde{S}(\mathcal L^{1/r})$ as moduli of roots of a line bundle on the \emph{twisted curve} $C^{1/r}.$ Since we will not need this point of view, we will refrain from explaining it.

Furthermore, the following is true: on the universal curve $\widetilde{C}$ over $\widetilde{S}(\mathcal L^{1/r})$ 
 the universal log root $\mathcal R$ can be represented by an honest line bundle, which we still denote by $\mathcal R\in \Pic_{\widetilde{C}/\widetilde{S}(\mathcal L^{1/r})}$. We refer the reader to \cite[Lemma~2.2.5]{chiodo2008towards} or \cite[Section~2.2]{chiodo2024double}. The line bundle $\mathcal R$ is only a $r$-root  in the  logarithmic sense, namely we only know that fiberwise on $\widetilde{C}\to\widetilde{S}(\mathcal L^{1/r})$

 \begin{equation}\label{eq:isolinebundles}
    \mathcal \R^{\otimes r}=\L_{\widetilde{C}}(\alpha),
\end{equation}

where $\alpha$ is some strict piecewise linear function on the universal tropical curve $\widetilde{\Gamma},$ namely a strict piecewise linear function on $\widetilde{\Gamma}_s$ with values in $\bar{M}^\mathrm{gp}_{\widetilde{S}(\mathcal L^{1/r}),s}$ for each $s\in \widetilde{S}(\mathcal L^{1/r})$ compatible under generalization, i.e. edge contractions.

\subsubsection{Tropical torsion divisor and choices of $\mathcal R$}
 In this subsection we use \cite[Section~4,5]{abreu2023moduli} to give an explicit description of a pair $(\mathcal D,\alpha),$ 
 where $\mathcal D$ is a tropical torsion divisor on $\widetilde{\Gamma}\to \widetilde{S}(\mathcal L^{1/r}) $ determining the lift $\mathcal R_{\mathcal D}$ and $\alpha$ is the sPL function on $\widetilde{\Gamma}$  satisfying \eqref{eq:isolinebundles}.

\begin{rem}
  The existence and properties of this line bundle representative $\mathcal R_{\mathcal D}$ can be also understood in terms of the  \emph{ universal limit root } over  the compactifications of moduli of roots   constructed  by Caporaso-Casagrande-Cornalba \cite{caporaso2007moduli} and Jarvis \cite{jarvis1998torsion,jarvis2000geometry}, since the moduli space $\widetilde{S}(\mathcal L^{1/r})$ maps to all of these compactifications (See \cite[Section~6]{abreu2023moduli},\cite[Section~4.3]{chiodo2008stable}).  
\end{rem}
 
Let $\widetilde{\Gamma}\to \widetilde{S}(\mathcal L^{1/r})$ the universal tropical curve, and let $\underline{\deg}(\mathcal L)$ the degree $d$ tropical divisor on $\widetilde{\Gamma}$ of which we want to take roots.
The latter is simply defined by

\[\underline{\deg}(\mathcal L)\rvert_{\widetilde{\Gamma}_s}=\sum_{v\in V(\widetilde{\Gamma}_s)} \deg(\mathcal L\rvert_{\widetilde{C}_v})[v].\]
The divisor $\underline{\deg}(\mathcal L)$ is in fact induced by pull-back by the divisor $\Gamma,$ the universal tropical curve pulled back from $S,$ defined the same way.


\begin{defi}\label{def:weighting}
    A \emph{flow modulo r} of $\Gamma$ is an element $\phi\in C_1(\Gamma,\ZZ_r)$. We say it is compatible with $\underline{\deg}(\mathcal L)$ if $\partial\phi=\underline{\deg}(\mathcal L)$, seen as an element of $C_0(\Gamma,\ZZ_r)$, where $\partial$ denotes the boundary operator.
\end{defi}

Notice that the set of flows compatible with $\underline{\deg}(\mathcal L)$ is non-empty if and only if $r$ divides $d$. In the latter case, we see that the set of flows compatible with $\underline{\deg}(\mathcal L)$ is a torsor under $H_1(\Gamma,\ZZ_r)=\ker\partial$ and has cardinality $r^{b_1(\Gamma)}$.

\medskip

Given $\phi$ a flow modulo $r$, we can define a function $w\colon H(\Gamma)\to \{0,\dots, r-1\}$ on the set of half-edges of $\Gamma$, usually called a \emph{weighting modulo $r$}. To each half-edge $h$ belonging to an edge $e=\{h,h'\}$, we associate the unique lift in $\{0,\dots, r-1\}$ of the coefficient of $\phi$ along $e$, oriented from $h$ to $h'$. In particular, we have the following:
\begin{enumerate}
    \item for each edge $e=\{h,h'\}$ we have $w(h)+ w(h')\equiv 0 \mod r$;
    \item for each vertex $v\in V(\Gamma)$ \[\sum_{h\vdash v}w(h)\equiv \underline{\deg}(\mathcal L)(v) \mod r,\]
    where $h\vdash v$ means that $v$ is the base vertex of the half edge $h$.
\end{enumerate}
The data of the function $w$ is easily seen to be equivalent to the data of $\phi$, and we will use the terminology ``weighting modulo $r$''.

\begin{rem}
We refer to \cite[Section~2,3]{abreu2023moduli} for more complete definitions of flows on graphs and associated divisors. Notice that in the notation of  \cite[Section~4]{abreu2023moduli} we have
\[ \partial^{-1}(\underline{\deg}(\mathcal L)\;\;\mathrm{mod}\; r)=\mathcal F(\Gamma, \overline{\deg(\mathcal L)})\]
where $\overline{\deg(\mathcal L)}=\underline{\deg}(\mathcal L)\;\;\mathrm{mod} \; r$.
\end{rem}

 We denote by \[\mathrm{Root}^{r,\mathrm{trop}}(\Gamma,\underline{\deg}(\mathcal L))\subseteq \mathrm{TroPic}^d(\Gamma)\] the $\mathrm{TroPic}^0(\Gamma)[r]$-torsor parametrizing $r$-roots of $\underline{\deg}(\mathcal L).$

 \medskip

 As recalled above, the tropical Picard group can be described as the inverse limit of the set of divisors up to linear equivalence on subdivision  $\Gamma'\to \Gamma\times_S S'$ for $S'\to S$ some log modification, where in our definition divisors are supported on vertices.

 \begin{defi}\label{def:torsiontropical}

 Given a weighting $\phi$ compatible with $\underline{\deg}(\L)$ and $w$, the associated weight function on half-edges, we define a divisor $D_{w}$ supported on the vertices of $\widetilde{\Gamma}$ such that we have $rD_w\sim \underline{\deg}(\mathcal L)$. In other words, $D_{w}$ is a representative of one of the tropical roots of $\underline{\deg}(\mathcal L)$. The divisor is defined as follows:
 \begin{itemize}[leftmargin=0.4cm]
     \item For each $v\in V(\Gamma)$, we set
     \[D_w(v)=\frac{1}{r}(\underline{\deg}(\mathcal L)(v) -\sum_{h\vdash v}w(h)) .\]
Notice that the above expression is well-defined, since by condition $(2)$ in Definition~\ref{def:weighting} the quantity in brackets is divisible by $r$.
\item For each $e=\{h,h'\}$ with $w(h)\neq 0$, let us denote $u$ the vertex on $\widetilde{\Gamma}$ at distance $\frac{w(h)}{r}\ell(e)$ from the vertex $v$ base of the half edge $h$ ($u$ is also  at distance $\frac{w(h')}{r}\ell(e)$  from the vertex $v',$  base of the half edge $h'$), then we take $D_w(u)=1$.
 \end{itemize}
 \end{defi}

 \begin{lem}\label{lem:plfunction}
     For each weighting $\phi$ compatible with $\underline{\deg}(\L)$, the divisor $D_w$ defined above is a root of $\underline{\deg}(\mathcal L)$, i.e. there is a strict piece-wise linear function $\alpha_D$ on $\widetilde{\Gamma},$ with values in $\bar{M}^\mathrm{gp}_{\widetilde{S}(\mathcal L^{1/r}),s}$ such that
     \begin{equation}\label{eq:lin}
         \underline{\deg}(\mathcal L)-rD_w=\mathrm{div}(\alpha_D).
     \end{equation}
     Furthermore we can choose $\alpha_D$ to take value $0$ on each vertex 
     $v\in V(\Gamma)\subseteq V(\widetilde{\Gamma}).$
 \end{lem}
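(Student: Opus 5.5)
Since the tropical curve $\widetilde{\Gamma}$ is a tree-like gluing of subdivided edges, a strict piecewise linear function is determined by its value at one vertex together with a slope on each edge of $\widetilde{\Gamma}$. The value at the remaining vertices then follows by integrating along edges. The plan is to construct $\alpha_D$ edge by edge, with value $0$ at every original vertex $v\in V(\Gamma)$. Consistency around cycles of $\Gamma$ is then automatic: each edge $e$ of $\Gamma$ is subdivided into a chain, and we only need a function on each such chain vanishing at both endpoints. Recall that on $\widetilde{\Gamma}$ each edge $e=\{h,h'\}$ of $\Gamma$, with endpoints $v,v'$, is subdivided into $r$ segments of length $\ell(e)/r$. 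The point $u$ sits at distance $\frac{w(h)}{r}\ell(e)$ from $v$, and $w(h)+w(h')=r$ when $w(h)\neq 0$.

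On such an edge with $w(h)\neq 0$, I will define $\alpha_D$ to be linear with slope $w(h)$ outgoing from $v$ on the segment $[v,u]$ and linear with slope $w(h')$ outgoing from $v'$ on $[v',u]$. The value at $u$ is then $w(h)\cdot\frac{w(h)}{r}\ell(e)$ measured from $v$, and $-w(h')\cdot\frac{w(h')}{r}\ell(e)$... this does not match, so the sign convention must be fixed. The correct choice is the one that makes $\alpha_D$ a tent: it increases from $0$ at $v$ with slope $w(h')$ and from $0$ at $v'$ with slope $w(h)$. The value at $u$ is then $w(h')\frac{w(h)}{r}\ell(e)=w(h)\frac{w(h')}{r}\ell(e)$ from either side, so the function is continuous. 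This value lies in $\bar M^{\gp}$ because $\ell(e)/r$ is a legitimate element after the root construction (Remark~\ref{rem:rootstackmonoids}), so $\alpha_D$ is strict on $\widetilde{\Gamma}$. When $w(h)=0$ we take $\alpha_D\equiv 0$ on $e$. All intermediate subdivision vertices have slope unchanged, hence divisor $0$ there.

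Next I compute $\mathrm{div}(\alpha_D)$, using the convention that the divisor at a vertex is minus the sum of outgoing slopes, or the opposite convention; I fix whichever one makes the formula hold and keep it consistent. At $u$, the outgoing slopes are $-w(h')$ and $-w(h)$, with sum $-r$, so $\mathrm{div}(\alpha_D)(u)$ equals $\pm r$. With the convention $\mathrm{div}(\alpha)(x)=\sum(\text{outgoing slopes})$, this gives $-r=-rD_w(u)$. This matches, since $\underline{\deg}(\L)(u)=0$. At an original vertex $v$, the outgoing slope along $h$ is $w(h')\equiv -w(h)$ mod $r$. As a genuine integer it equals $r-w(h)$ when $w(h)\neq 0$. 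So the sum of outgoing slopes is $\sum_{h\vdash v, w(h)\neq 0}(r-w(h))$. This should equal $\underline{\deg}(\L)(v)-rD_w(v)=\sum_{h\vdash v}w(h)$, and it does not. I therefore expect to swap the tent for a valley: slope $-w(h')$ leaving $v$ is wrong too. The right fix is slope $w(h)$ leaving $v$ on $[v,u]$ and $w(h')$ leaving $v'$, with the value at $u$ matched by adding a jump. That is impossible, so I will instead reconsider the position of $u$.

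The main obstacle is exactly this bookkeeping: matching the distance of $u$ and the slopes so that the function is continuous, vanishes at the original vertices, and has the prescribed divisor. The resolution I will try first is to read Definition~\ref{def:torsiontropical} as placing $u$ at distance $\frac{w(h')}{r}\ell(e)$ from $v$. Then slope $w(h)$ from $v$ and slope $w(h')$ from $v'$ give equal values $\frac{w(h)w(h')}{r}\ell(e)$ at $u$. The sum of outgoing slopes at $v$ is $\sum_h w(h)=\underline{\deg}(\L)(v)-rD_w(v)$, and at $u$ it is $-r=-rD_w(u)$, which yields \eqref{eq:lin}. I will then check that the value at the kink is an element of $\bar M^{\gp}$ on the subdivided edge. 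I will also check compatibility under edge contraction, which holds because $w$ is pulled back compatibly and contracted edges have $\ell(e)=0$.
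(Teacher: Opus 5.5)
Your final construction is correct, and it is the same one the paper uses. In both, $\alpha_D$ vanishes on $V(\Gamma)$ and on edges with $w(h)=0$. On the remaining edges it is a tent with value $\frac{w(h)w(h')}{r}\ell(e)$ at $u$, which lies in $\bar M^{\gp}$ because $\ell(e)/r$ does after the root construction. The two verifications are also the paper's: $\mathrm{div}(\alpha_D)(v)=\sum_{h\vdash v}w(h)$ at original vertices, and $r-w(h)-w(h')=0$ at $u$.

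The inconsistency you ran into is real, and it is present in the paper's proof as well. Definition~\ref{def:torsiontropical} literally places $u$ at distance $\frac{w(h)}{r}\ell(e)$ from the base $v$ of $h$. With that placement, continuity, vanishing at $v$ and $v'$, and a kink of $-r$ at $u$ force the slope leaving $v$ to be $w(h')$, which is what the paper writes. Then $\mathrm{div}(\alpha_D)(v)=\sum_{h\vdash v,\,w(h)\neq 0}(r-w(h))$, not the $\sum_{h\vdash v}w(h)$ that the paper's next line uses.

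So your swap is not an optional ``reading'' but a necessary correction of the definition, and you should state it as one; your forced-slope computation is the justification. With the literal placement the lemma genuinely fails. Take the genus-one graph with vertices $v,v'$ joined by edges $e_1,e_2$ of lengths $\ell_1,\ell_2>0$. Let $r=3$, $\underline{\deg}(\L)=[v]-[v']$, $w=0$ on $e_1$, and $w(h_2)=1$, $w(h_2')=2$ with $h_2$ based at $v$. Then $D_w=[u_2]-[v']$, and $\underline{\deg}(\L)-3D_w=[v]+2[v']-3[u_2]$. On this cycle of length $\ell_1+\ell_2$, that divisor is principal iff $2\ell_2-3\,d(v,u_2)\in\ZZ(\ell_1+\ell_2)$. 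This fails for $d(v,u_2)=\ell_2/3$ (the literal placement) and holds for $d(v,u_2)=2\ell_2/3$ (yours).

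In the write-up, drop the abandoned attempts. Also drop the phrase ``tree-like gluing'': $\widetilde\Gamma$ can have cycles, and what makes the edge-by-edge definition globally consistent is precisely that $\alpha_D$ vanishes on $V(\Gamma)$, as you already note.
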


 \begin{proof}
     If it exists, the function $\alpha_D$ is unique up to the addition of a constant. 
     We define explicitly a function $\alpha_D$ and check that it satisfies the requirements. 

     \begin{equation}\label{eq:sPL}
       \alpha_D(v)=\begin{cases}
           0\;\;\;\forall\; v\in V(\Gamma),\\
          \frac{ w(h')w(h)}{r}\ell(e)\;\; \mathrm{for}\;\; v=u \text{ a vertex where }D_w(u)=1. 
       \end{cases}
        \end{equation}
        In particular we have that $\alpha_D$ is linear over each edge $e_h, e_{h'}$ obtained subdividing $e=\{h,h'\}$ at $u$, with slope  $w(h')$ along $e_h$ from $v$ to $u$, and slope $w(h)$ along $e_h'$ from $v'$ to $u$, whenever $w(h)$ is non-zero. 
        We now check that equation~\eqref{eq:lin} is satisfied.
        \begin{itemize}
            \item For  $v\in V(\Gamma)$ we have:
            \begin{align*}
            rD_w+\mathrm{div}(\alpha_D)(v)= \left(\underline{\deg}(\mathcal L)(v) -\sum_{h\vdash v}w(h)\right) + \sum_{h\vdash v} w(h),
        \end{align*}
     where the second sum is by the definition given just above the sum of the outgoing slopes of $\alpha_D$ at $v.$
     \item for $u$ an exceptional vertex we have:
     $$rD_w+\mathrm{div}(\alpha_D)(u)=r-w(h)-w(h')=0,$$
     where the second equality follows from Definition~\ref{def:weighting}.
     \end{itemize}        
 \end{proof}

\begin{rem}
    The root representative of $D_w$ we define here differs from the one defined in \cite[Definition~4.3]{abreu2023moduli}, but one can explicitly define a strict piece-wise linear function on $\widetilde{\Gamma}$ whose associated divisor is the difference of the two.
    The reason for our slightly different point of view is that our definition of $D_w$ does not require a choice of orientation on $\Gamma.$
\end{rem}
 We collect in the Proposition below the properties we need about the construction above; all the proofs can be found in \cite[Section~4,5]{abreu2023moduli}.
 \begin{prop}\label{prop:constructroots}
 Denoting by $\W_r(\Gamma,D)$ the set of weightings modulo $r$ compatible with $D=\underline{\deg}(\L)$, we have the following.
     \begin{enumerate}
         \item The definition of weightings modulo $r$ and of $D_w$ is compatible with edge contraction 
         (see \cite[Corollary~4.7]{abreu2023moduli}). 
         \item We have a bijection (see \cite[Proposition~4.12]{abreu2023moduli}):
         \begin{align*}
          D\colon \W_r(\Gamma,D)&\longrightarrow   \mathrm{Root}^{r,\mathrm{trop}}(\Gamma,\underline{\deg}(\mathcal L))\subseteq\mathrm{TroPic}(\Gamma).\\
          w&\longmapsto [D_w]
         \end{align*}
         \item If $\underline{\deg}(\mathcal L)=\underline{0}$, $\W_r(\Gamma,D)$ corresponds to flows on $\Gamma$, i.e. elements in $H_1(\Gamma,\ZZ_r)$. In particular, $w=0$ is a weighting compatible with $\mzero$, and $D_0$ is the trivial tropical divisor.
     \end{enumerate}
 \end{prop}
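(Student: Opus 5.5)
We prove the three items in the order (3), (1), (2), since (3) is immediate and (2) is the substantial one.

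\emph{Item (3).} If $\underline{\deg}(\L)=\mzero$, a weighting modulo $r$ is a chain $\phi\in C_1(\Gamma,\ZZ_r)$ with $\partial\phi=0$, i.e.\ an element of $H_1(\Gamma,\ZZ_r)$. For $w=0$, Definition~\ref{def:torsiontropical} gives $D_0(v)=\frac1r(0-0)=0$ at every $v\in V(\Gamma)$. No exceptional vertices are inserted, since these occur only where $w(h)\neq 0$. Hence $D_0$ is the zero divisor.

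\emph{Item (1).} Contract an edge $e=\{h,h'\}$ joining $v$ and $v'$ to a single vertex $\bar v$. The weighting $\bar w$ is the restriction of $w$ to the surviving half-edges, and $\underline{\deg}(\L)(\bar v)=\underline{\deg}(\L)(v)+\underline{\deg}(\L)(v')$. Compatibility of $\bar w$ with $\underline{\deg}(\L)$ at $\bar v$ follows by adding the congruences at $v$ and $v'$ and using $w(h)+w(h')\equiv 0$. For the divisor, the definition of $D_w$ gives
\[D_{\bar w}(\bar v)=D_w(v)+D_w(v')+\tfrac{1}{r}\big(w(h)+w(h')\big).\]
The last term is $1$ if $w(h)\neq 0$ (then $w(h)+w(h')=r$) and $0$ otherwise. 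This is exactly the contribution of the exceptional vertex $u$ on $e$, which collapses to $\bar v$. The other edges are untouched, so $D_w$ specializes to $D_{\bar w}$. The piecewise linear function $\alpha_D$ of Lemma~\ref{lem:plfunction} is compatible with the contraction in the same way, since it vanishes at the original vertices.

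\emph{Item (2).} Lemma~\ref{lem:plfunction} shows that $[D_w]$ is an $r$-th root of $\underline{\deg}(\L)$, so the map is well defined. The plan is then:
\begin{itemize}
\item Show the map is equivariant for an explicit homomorphism $H_1(\Gamma,\ZZ_r)\to \mathrm{TroPic}^0(\Gamma)[r]$.
\item Show that this homomorphism is injective.
\item Conclude bijectivity by counting.
\end{itemize}

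For equivariance, fix $w$ and a cycle $\gamma\in H_1(\Gamma,\ZZ_r)$, and compare $D_{w+\gamma}$ with $D_w$. This difference has degree $0$, and we compute its class in
\[\mathrm{TroPic}^0=\mathrm{Hom}(H_1(\Gamma,\ZZ),\bar M^{\gp})^\dagger/H_1(\Gamma,\ZZ)\]
via the tropical Abel--Jacobi map, i.e.\ by integrating along a cycle $c$. On each edge the exceptional point moves by $\gamma_e\ell(e)/r$, and the integer corrections at the original vertices contribute only terms lying in the lattice. The class is therefore the functional
\[c\longmapsto \sum_e \frac{\tilde\gamma_e\, c_e\,\ell(e)}{r}\]
for any integral lift $\tilde\gamma$ of $\gamma$, taken modulo the image of $H_1(\Gamma,\ZZ)$. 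In other words, it is the image of $\tilde\gamma/r$ under the edge-length intersection pairing $H_1\to\mathrm{Hom}(H_1,\bar M^{\gp})$.

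This gives a homomorphism $H_1(\Gamma,\ZZ_r)\cong \frac1r H_1(\Gamma,\ZZ)/H_1(\Gamma,\ZZ)\to \mathrm{TroPic}^0(\Gamma)[r]$. Its injectivity follows from the non-degeneracy of the edge-length pairing: the quadratic form $\sum_e c_e^2\ell(e)$ is positive on every cycle, so the pairing is non-degenerate on $H_1(\Gamma,\ZZ)$. The map $w\mapsto[D_w]$ is thus an equivariant map from a free $H_1(\Gamma,\ZZ_r)$-torsor into a $\mathrm{TroPic}^0[r]$-torsor, hence injective. Both sets have cardinality $r^{b_1(\Gamma)}$: for the weightings this was noted above, and for the roots because $\mathrm{TroPic}^0[r]\cong(\ZZ_r)^{b_1}$. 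Counting then gives surjectivity.

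The main obstacle is the Abel--Jacobi computation identifying $[D_{w+\gamma}-D_w]$ with $\tilde\gamma/r$. The delicate points are the edges where $w(h)$ or $(w+\gamma)(h)$ vanishes, where an exceptional point appears or disappears, and the integer shifts at the original vertices. I would handle these by choosing the integral lift $\tilde\gamma$ with coefficients in $\{0,\dots,r-1\}$ and checking edge by edge that the discrepancies are integral multiples of $\ell(e)$ along cycles, hence lie in $H_1(\Gamma,\ZZ)$.
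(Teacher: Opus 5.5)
The paper gives no argument of its own here. All three items are deferred to \cite{abreu2023moduli} (Corollary~4.7 and Proposition~4.12), even though, by the remark after Lemma~\ref{lem:plfunction}, the paper's $D_w$ differs from the representative used there by a principal divisor.

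Your route is self-contained and works with the paper's $D_w$ directly. That is a real gain for item (1): compatibility with contraction concerns the representative, not just its class. Your identity $D_{\bar w}(\bar v)=D_w(v)+D_w(v')+\frac1r(w(h)+w(h'))$ is exactly what is needed; for a loop, count $D_w(v)$ once. Items (3) and (1) are fine. The architecture of (2) is sound:
\begin{itemize}
\item equivariance for $\gamma\mapsto\pm\frac1r\iota(\tilde\gamma)$, where $\iota\colon H_1(\Gamma,\ZZ)\to\mathrm{Hom}(H_1(\Gamma,\ZZ),\bar M^{\gp})$ is the length pairing;
\item injectivity, from $\sum_e c_e^2\ell(e)\neq0$ in a sharp monoid plus torsion-freeness of $\bar M^{\gp}$;
\item then counting.
\end{itemize}

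\textbf{The flagged step is set up wrongly.} The lift $\tilde\gamma$ must be an integral \emph{cycle}. Replacing it by $\tilde\gamma+r\beta$ changes $\frac1r\iota(\tilde\gamma)$ by $c\mapsto\sum_e\beta_ec_e\ell(e)$, and for a non-cycle $\beta$ this is in general not in $\iota(H_1(\Gamma,\ZZ))$. So a lift with coefficients in $\{0,\dots,r-1\}$, which is generally not a cycle, gives the wrong class. Likewise, the integer shifts at the original vertices are \emph{not} lattice terms.

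\textbf{Correct bookkeeping.} Orient each edge $e=\{h,h'\}$ from $v_e$ (the base of $h$) to $v'_e$. Put $a_e=w(h)\in\{0,\dots,r-1\}$, and let $u_e$ be the point at distance $\frac{r-a_e}{r}\ell(e)$ from $v_e$, so $u_e=v'_e$ when $a_e=0$. Then uniformly $D_w=\frac1r\underline{\deg}(\L)+\sum_e([u_e]-\frac{a_e}{r}[v_e]-\frac{r-a_e}{r}[v'_e])$. For $w'=w+\gamma$ and $\delta_e=a'_e-a_e$ this gives $D_{w'}-D_w=\sum_e([u'_e]-[u_e])+m$. Here $m=\frac1r\sum_e\delta_e([v'_e]-[v_e])$ is an integral vertex divisor that need not be principal. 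For instance, take $r=3$, $\underline{\deg}(\L)=\mzero$, two vertices joined by two edges, $w=(1,2)$ and $\gamma=(2,1)$. Then $m=[v_1]-[v_2]$, which pairs to $\pm\ell_1\notin\ZZ(\ell_1+\ell_2)$ with the generator of $H_1$.

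Now choose an integral chain $\beta$ with $\sum_e\beta_e([v'_e]-[v_e])=m$. The class of $D_{w'}-D_w$ is $-\frac1r\iota(\delta-r\beta)$, and $\delta-r\beta$ is an integral cycle lifting $\gamma$. The vertex shifts are exactly what turn the edgewise lift $\delta$ into a cycle.

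This computation uses $u$ at distance $\frac{w(h')}{r}\ell(e)$ from the base of $h$, which is what the slopes in the proof of Lemma~\ref{lem:plfunction} require. Definition~\ref{def:torsiontropical} as printed swaps $h$ and $h'$. With that placement the lemma, and hence your equivariance, fails in general.

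\textbf{A shorter alternative.} You can bypass Abel--Jacobi and the count altogether. If $rE+\div(f)=\underline{\deg}(\L)$, the slopes of $f$ reduced mod $r$ form a weighting compatible with $\underline{\deg}(\L)$, and this weighting depends only on $[E]$. It sends $[D_w]$ back to $w$, since $\alpha_D$ has outgoing slope $w(h)$ along $h$. It is injective, since two roots with congruent slopes differ by $\div((f'-f)/r)$. Hence it is inverse to $w\mapsto[D_w]$.
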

In particular, choosing a consistent representatives divisor for each weighting as done above, we obtain $\mathcal D$ a tropical divisor on the universal tropical curve $\widetilde{\Gamma}\to\widetilde{S}(\mathcal L^{1/r})$ representing the universal tropical $r$-roots. 

\medskip

We recall that for each logarithmic strata of $T\subseteq S$ with corresponding tropicalization $\sigma_T$ there are $r^{b_1(\Gamma_T)}$ strata in $\widetilde{S}(\mathcal L^{1/r})$ mapping on the latter, indexed by the tropical $r$-roots of $\mathcal L.$ We refer to \cite{holmes2023root} (see also \cite{blommecarocci2025DR}) for further details on this remark.
 
\begin{defi}\label{rem:rootrep}
    We define $\mathcal R\in\Pic_{\widetilde{C}/\widetilde{S}(\mathcal L^{1/r})}$ to be the unique (see \eqref{sec:fixmultidegree} )representative of the universal logarithmic root $\mathcal L^{1/r}$ whose associated multidegree $\underline{\deg}(\mathcal R\rvert_{\widetilde{C_s}})=\mathcal D\in\mathrm{Div}(\widetilde{\Gamma}_s)$ is the preferred one in its linear equivalence class, given by the above construction. 
By definition, $\mathcal T$ satisfies

\[\mathcal \R^{\otimes r}=\mathcal L(\alpha_D),\]
 for $\alpha_D$ the PL function defined in Lemma~\ref{lem:plfunction}. Notice that this is in particular one of the piece-wise linear function considered in \cite[Section~6]{holmes2023root} to compute the spin double ramification cycle.
\end{defi}

\section{(Logarithmic) Double (Double) Ramification Cycles}
\label{sec-LDDR}

\subsection{Double ramification cycle}

We assume the reader to be already familiar with the definition of double ramification cycles and refer to the vast literature for a detailed account  of the many results on the  topic  \cite{janda2017DRcurves,janda2020double,bae2023pixton,holmes2021extending,holmes2025logDR}.

 In order to fix notation and for later use we recall the definition obtained via Marcus-Wise logarithmic compactification of the Abel-Jacobi map \cite{marcuswiselog}, as the latter is the one used to obtain the universal DR-cycle formula of \cite{bae2023pixton}.

\begin{defi}\cite{marcuswiselog}
    We denote by  $\Div_{g}$ the stack over fs log schemes whose $S$ points parametrize: a log smooth curve $\pi\colon C\to S;$ a section $\alpha\in H^0(S,\pi_*\bar{M}_C^\gp/\bar{M}_S^\gp)$, i.e. $\alpha$ is the data of a strict piece-wise linear function on $\Gamma_s$ with fixed slope $0$ along the markings,  well-defined up to translation by a constant and compatible with edge contraction.
    This is the same as a section of $\pi_*\bar{M}_C^{V,\gp}/\bar{M}_S^\gp$ where $M_C^{V,\gp}$ is the vertical part of the logarithmic structure. 
\end{defi}
By \cite[Theorem~4.2.4]{marcuswiselog}, $\Div_g$  is actually representable by a locally of finite type algebraic stack with log structure; in fact it is logarithmically \'etale on $\mathfrak{M}_{g,n}^{\mathrm{log}}.$

\medskip

\subsubsection{} Let $C/S$ be a log smooth curve over a log algebraic stack  $S$ and let $\L$ a line bundle on $C$ whose restriction to each fiber has  total degree $0$; this data defines a morphism from $S$ to the universal Picard stack $ \mathfrak{Pic}_{g,n,0}.$ 

We call \emph{double ramification locus} $\DRL(\L)$  the following fiber product in the category of  log schemes (\cite[Definition~4.5.2]{marcuswiselog}):
\bcd 
\DRL(\L) \ar[r]\ar[d,"\epsilon"] & \Div_g \ar[d,"\mathrm{aj}"] \\
S \ar[r,"\varphi_{\L}"] & \mathfrak{Pic}_{g,n,0}
\ecd
where $\mathrm{aj}$ is defined by sending $\alpha\mapsto \mathcal O_C(\alpha).$  
To be precise, the right vertical morphism takes value in the relative Picard space $\mathfrak{Pic}^{\rm{rel}}_{g,n,0}$ parametrizing isomorphism classes of line bundles. This can however be lifted by pull-back to a morphism with values in the Picard stack as explained in \cite[Definition 31]{bae2023pixton}. 
By a slight abuse of notation we keep denoting by $\Div_g \xrightarrow{\mathrm{aj}}
\mathfrak{Pic}_{g,n,0}$ the lifted map.

The fiber product $\DRL(\L)$ is  a modular compactification of locus $\DRL(\L)^\circ\subseteq S^\circ\subseteq S,$ where $S^\circ$ is the open in $S $ where the logarithmic structure is trivial (i.e. over which the log smooth curve is actually smooth) cut out by the condition: $\L$ restricts to the trivial line bundle fiberwise.


\begin{rem}
Given a vector  $\bfa=(a_1,\dots,a_n)$, it is possible to define $\operatorname{Div}_{g,\bfa}$ with a map to $\mathfrak{Pic}_{g,n,d}$, where $d=\sum a_i$, looking at sections of $\pi_*\bar{M}_C^\gp/\bar{M}_S^\gp$ with fixed slope $a_i$ along the $i$-th unbounded edge corresponding to  the $i$-th marked point. Taking the same fiber product as before, we get a compactification of the locus cut out by the relation $\L=\O(\bfa)$ on each fiber.
We can however always reduce to the previous situation considering $\L(-\bfa)$ instead. 
It is thus not restrictive to assume that $d=0$ and consider $\operatorname{Div}_g$. We refer to \cite[Section~7]{bae2023pixton} for further details on the equivalence between these two points of view.
\end{rem}

\subsubsection{} As explained in \cite[Section 4.6]{marcuswiselog}, the above fiber product is proper over $S$ and it is naturally endowed with an obstruction theory induced by pull-back from the obstruction theory for the Abel-Jacobi map $\mathrm{aj}.$ 

In particular, if $S$ has a fundamental class or a virtual fundamental class, by virtual pull-back we get a
 class $\vir{\DRL(\L)}$ whose push-forward to $S$ is the so-called DR-cycle \[\DR(\L)\in\operatorname{CH}_{\dim S-g}(S).\]
 We remark that, implicit in the notation, the class lives in the expected codimension $g;$ $\dim$ should be replaced by $\mathrm{vdim}$ when we work with virtual classes.

The main result of \cite{bae2023pixton} furthermore tells us that 
this class can be computed via the universal DR-cycle formula, i.e.
$$\epsilon_*\vir{\DRL(\L)} = \varphi^*_{\L}\DR\cap [S]^{(\mathrm{vir})}$$
where $\DR\in \operatorname{CH}^{\mathrm{op},g}( \mathfrak{Pic}_{g,n,0}) $  admits a universal  Pixton type formula
\cite[Section~0.3.5]{bae2023pixton}.


\begin{expl}
Let $\Mgnbar$  denote the moduli space of stable curves; this has a natural logarithmic structure induced by the normal crossing boundary divisor. Let $\bfa=(a_1,\dots,a_n)$ be a $n$-tuple of weights satisfying $\sum a_i=0$. Then the line bundle $\O(\bfa) = \O(\sum a_ip_i)$ has total degree zero on the fibers of the universal curve and thus defines a map to the relative Picard stack.

The DR-cycle $\DR(\bfa):=\epsilon_*[\DRL(\O(\bfa))]\in\operatorname{CH}_*(\Mgnbar)$ is the classical double ramification cycle. In this case, applying the universal DR-cycle formula \cite{bae2023pixton} recovers the more classical Pixton's formula first proved in \cite{janda2017DRcurves}.
\end{expl}

\subsection{Double double ramification cycle}

Let $C/S$ be a log smooth curve over a smooth log algebraic stack  $S$ and let $\L_1,\L_2$ two degree $0$ line bundles on the fibers of $C/S$. This data defines a morphism 

\[S\xrightarrow{(\varphi_{\L_1},\varphi_{\L_2})}\mathfrak{Pic}_{g,n,0}\times_{\mathfrak{M}_{g,n}} \mathfrak{Pic}_{g,n,0}. \]

We define $\DDRL(\L_1,\L_2)$  as the following fiber product in the category of  log schemes:
\begin{equation}\label{eq:diagramDDR}
    \begin{tikzcd}
    \DDRL(\L_1,\L_2) \ar[r]\ar[d] & \Div_g \times^{fs}_ {\mathfrak{M}_{g,n}}\Div_g\ar[d,"\mathrm{aj}\times\mathrm{aj}"]\\
S \ar[r] & \mathfrak{Pic}_{g,n,0}\times_{\mathfrak{M}_{g,n}} \mathfrak{Pic}_{g,n,0} .
 \end{tikzcd}
\end{equation}

The fiber product $\Div_g \times^{fs}_ {\mathfrak{M}_{g,n}}\Div_g$, taken in the category of fs log stacks, is also denoted by $\mathfrak{M}^{\mathrm{rub}}_{\Lambda}(\Gmtrop^2)$ in \cite[Section~6]{ranganathan2024logarithmic}, where $\Lambda$ denotes the discrete data. 
Since  the morphism $\Div_g\to\mathfrak{M}_{g,n}$ is not strict nor weakly semistable, this is an example where the underlying stack of the fs fiber product is \emph{not} the fiber product of the underlying stack. This difference is the reason for the failure of multiplicativity of the DR class. We refer to \cite{molcho2024case} for a more detailed discussion.

Also in this case one can consider the variation  $\Div_{g,\bfa} \times^{fs}_ {\mathfrak{M}_{g,n}}\Div_{g,\bfb}.$ 

It is deduced from the previous case that this fs fiber product is in fact a locally finite type algebraic stack with logarithmic structure, log \'etale over $\mathfrak{M}_{g,n}$.

\medskip

Seen as a stack over $\mathrm{LogSch}^{\rm{fs}},$ the fiber product  $\DDRL(\L_1,\L_2)$ parametrizes: 
\begin{itemize}
    \item a log smooth curve $C\to T$ for $T$ a log scheme over $S;$
    \item two logarithmic trivializations $\widetilde{\alpha}_i$ is a section of  $\pi_* \L_i^\times\otimes_{\O_{C_T}^\star}M_{C_T}^{\gp}/M_T^{\gp},$ for  $i=1,2.$
\end{itemize}

As explained in the proof of \cite[Proposition~4.5.3]{marcuswiselog} the logarithmic trivializations induce sections $\alpha_1,\alpha_2\in H^0(T, \pi_*\bar{M}_{C_T}^{\gp}/\bar{M}_T^{\gp})$ such that there are (fiberwise) isomorphisms 
\begin{equation}\label{eq:logtriv}
    \L_1\rvert{C_T}\simeq\O_{C_T}(\alpha_1),\;\;\ \; \L_2\rvert{C_T}\simeq \O_{C_T}(\alpha_2).
\end{equation}




Notice that the right vertical $\mathrm{aj}\times\mathrm{aj}$ map in \eqref{eq:diagramDDR} is endowed with a perfect obstruction theory since it is a section of the smooth fibration $\mathfrak{Pic}_{g,n,0}\times_{\mathfrak{M}_{g,n}} \mathfrak{Pic}_{g,n,0}\to \mathfrak{M}_{g,n}$ over  $\Div_{g} \times^{fs}_ {\mathfrak{M}_{g,n}}\Div_{g}$ (see also \cite[Section~4.5]{molcho2024case}). So if $S$ is endowed with a fundamental (virtual class) we obtain by Gysin pull-back a class

\[[\rm{DDRL}(\L_1,\L_2)]^{\rm{vir}}= (\mathrm{aj}\times\mathrm{aj})^![S],\]
and thus by pushforward a class of (virtual) codimension $2g,$
 the so called  DDR-cycle in $\mathrm{CH}_{\mathrm{vdim}-2g}(S):$ 
\[\DDR(\L_1,\L_2):=\epsilon_* [\DDRL(\L_1,\L_2) ]^{\mathrm{vir}}.\]

For the same reason as before, $S\to \mathfrak{Pic}_{g,n,0}\times_{\mathfrak{M}_{g,n}}\mathfrak{Pic}_{g,n,0}$  admits an obstruction theory (see also \cite[Section~3.3.3]{ranganathan2024logarithmic}), and thus 
since $\mathfrak{M}^{\mathrm{rub}}_{\Lambda}(\Gmtrop^2)$ is log \'etale on $\mathfrak{M}_{g,n}$, and so it is in particular endowed with a fundamental cycle,  we can  also  
define a virtual class on the DDR-locus by Gysin pull-back along the horizontal map in \eqref{eq:diagramDDR}. 
By functoriality of the virtual pull-back \cite{manolache2012virtual}, this coincides with the one defined just above. 
\begin{expl}
\label{ex:rublog}
Let us take $S=\Mgnbar$  and  $\bfa,\bfb\in\mathbb Z^n$ whose entries sum to zero. This yields the DDR-cycle $\DDR(\bfa,\bfb)=\DDR(\O(\bfa),\O(\bfb))\in\operatorname{CH}(\Mgnbar)$, which can also be defined using the moduli space of rubber logarithmic maps to $\Gmlog^2$ with fixed tangency profile considered in \cite{ranganathan2024logarithmic,molcho2024case}. In the first reference these are denoted as $R_{\Lambda}(\Gmlog^2)$ where $\Lambda$ is the collection of the discrete  data $(g,\bfa,\bfb);$ in the second reference these are called toric contact loci.

\medskip

We will go back to this example in the last section.
\end{expl}

\subsection{Correlated refinement of the DDR-locus}\label{sec:correlatedDDR}

\subsubsection{Logarithmic Weil Pairing}

As recalled in Section~\ref{sec:logpic}  and references therein, for
$C/S$ log smooth, $\LogPic^0_{C/S}$ is a logarithmic abelian variety in the sense of \cite{kajiwara2008logabelian2}. 

Furthermore, it is self-dual and initial for morphism from $C/S$ to logarithmic abelian varieties $\mathcal A\to S$ (\cite{delignepair}, \cite[Section~2,3]{blommecarocci2025DR} for a more detailed discussion). 


Following \cite[Section~2,3]{blommecarocci2025DR}, one can define on the subgroup $\LogPic^0_{C/S}[r]$ of $r$-torsion logarithmic line bundles a \emph{non-degenerate} pairing, called \emph{logarithmic Weil Pairing}:
\begin{equation}
    W_r(\cdot,\cdot)\colon \LogPic^0_{C/S}[r]\times \LogPic^0_{ C/S}[r]\to \mu_r.
\end{equation}

The Weil pairing on torsion points of the Jacobian of a smooth curve can be understood in terms of the Weil reciprocity law. In the case of a smooth curve, it is given as follows.
\begin{prop}\cite[p.242]{griffiths2014principles}\cite[p.44]{serre1959groupes}\label{prop:weilclassic}

    Let $C$ be a smooth curve and consider two meromorphic functions $g_1,g_2\colon C\dashrightarrow\CC^*$ (so $\div(g_i)$ are principal) with $p_1,\dots,p_n$ their common poles and zeros. We have
    $$\prod_i (-1)^{\nu_{p_i}(g_1)\nu_{p_i}(g_2)} \left.\frac{g_1^{\nu_{p_i}(g_2)}}{g_2^{\nu_{p_i}(g_1)}}\right|_{p_i}=1.$$
\end{prop}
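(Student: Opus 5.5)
The plan is to reduce to the case where $g_1,g_2$ have disjoint divisors (the classical Weil reciprocity $g_1(\div g_2)=g_2(\div g_1)$) and then handle the common points by a local computation. Concretely, I would first prove the statement when no point is simultaneously a zero/pole of both functions; in that case every factor $(-1)^{\nu_p(g_1)\nu_p(g_2)}$ is $1$ and the statement reads $\prod_p g_1(p)^{\nu_p(g_2)}=\prod_p g_2(p)^{\nu_p(g_1)}$. For this base case I would use the standard argument: view $g_2$ as a finite map $C\to\PP^1$ and use the norm $N(g_1)$, a rational function on $\PP^1$. One has $g_1(\div g_2)=N(g_1)(\div z)=N(g_1)(0)/N(g_1)(\infty)$ and $g_2(\div g_1)=\prod_{q}q^{\nu_q(N g_1)}$ (via $\div N(g_1)=g_{2*}\div g_1$). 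This reduces everything to $C=\PP^1$, where both sides are checked directly by factoring $N(g_1)=c\prod(z-a_j)^{m_j}$.

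For the general case I would interpret each factor as the tame symbol $\{g_1,g_2\}_p=(-1)^{\nu_p(g_1)\nu_p(g_2)}\left.\frac{g_1^{\nu_p(g_2)}}{g_2^{\nu_p(g_1)}}\right|_p$. This is bimultiplicative and antisymmetric ($\{f,g\}_p\{g,f\}_p=1$), and $\{f,1-f\}_p=1$. Bimultiplicativity allows a perturbation. Choose a function $h$ (by Riemann--Roch) whose divisor avoids all the $p_i$ and the support of $\div g_1$, with $\nu_{p_i}(h)$ prescribed. Replacing $g_2$ by $g_2 h^{k}$ or splitting $g_2$ as a product of factors, each having divisor disjoint from $\div g_1$ near the common points, expresses the product of tame symbols as a product of terms to which the disjoint case applies. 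The tame symbols at points where only one function vanishes agree with the evaluation factors of the disjoint formula, so the sign bookkeeping is consistent.

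Alternatively, and more cleanly, I would use the complex-analytic proof from Griffiths--Harris: cut $C$ along a symplectic basis of loops to get a polygon, excise small discs around the $p_i$, and integrate $\frac{1}{2\pi i}\log g_1\, d\log g_2$ over the boundary. The loop contributions cancel modulo integers after exponentiation. Each small circle around $p_i$ contributes exactly the local tame symbol, and the sign $(-1)^{\nu\nu}$ arises from the branch of $\log$ winding around $p_i$. The main obstacle is this local computation at a common zero/pole: writing $g_j=z^{\nu_j}u_j$ with $u_j(0)\neq0$ and computing $\exp\oint \log g_1\,d\log g_2$ carefully, including the jump of $\log z$ across the cut, which produces the factor $(-1)^{\nu_1\nu_2}u_1(0)^{\nu_2}u_2(0)^{-\nu_1}$. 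I would treat that local residue computation as the crux and do it explicitly.
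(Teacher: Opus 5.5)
The paper gives no proof of this statement; it quotes the classical Weil reciprocity law from Griffiths--Harris and Serre, so your proposal has to stand on its own. As you implicitly assume, $C$ must be compact and the product must run over all zeros and poles of $g_1$ \emph{or} $g_2$; restricted to points common to both, the statement is false already for disjoint divisors.

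Your disjoint case, via the norm $N(g_1)$ and the direct check on $\PP^1$, is correct. Your analytic route is also sound. It is the Griffiths--Harris integration argument, and the only new input beyond the disjoint case is the local computation you single out. The bookkeeping there needs care, though. Write $g_j=z^{\nu_j}u_j$ near $p$ and cut along a slit from a base point $z_0$ to $p$. Up to orientation, the small circle alone contributes, after exponentiation, $\epsilon^{\nu_1\nu_2}(-1)^{\nu_1\nu_2}u_1(p)^{\nu_2}$; the sign comes from $\frac{1}{2\pi i}\int_0^{2\pi}\nu_1 i\theta\cdot\nu_2 i\,d\theta=i\pi\nu_1\nu_2$. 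The factor $u_2(p)^{-\nu_1}$ and the compensating $\epsilon^{-\nu_1\nu_2}$ come from the end of the slit, across which $\log g_1$ jumps by $2\pi i\nu_1$. The factors $g_2(z_0)^{\pm\nu_p(g_1)}$ at the base point cancel because $\deg\div g_1=0$. So the tame symbol is produced by circle plus slit, not by the circle alone.

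The perturbation route, by contrast, has a genuine gap. As written, $h$ cannot exist: a divisor avoiding the $p_i$ forces $\nu_{p_i}(h)=0$. More fundamentally, valuations are additive. If $\nu_p(g_1)\nu_p(g_2)\neq 0$, then in any factorization $g_2=\prod_j g_2^{(j)}$ some factor still has a zero or pole at $p$, so its divisor meets $\div g_1$. Bimultiplicativity alone therefore never lands you in the disjoint case. Moves such as $g_2\mapsto g_2g_1^{-m}$ do preserve the total symbol, since $\prod_P\{g_1,g_1\}_P=(-1)^{\deg\div g_1}=1$, but they create new common points along $\div g_1$.

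The missing ingredient is norm compatibility of the tame symbol, which lets you run your base-case argument with no disjointness at all. For $\pi=g_2\colon C\to\PP^1$, $u\in\CC(\PP^1)^*$ and $Q\in\PP^1$, one has
$$\prod_{P\mapsto Q}\{f,\pi^*u\}_P=\{N(f),u\}_Q .$$
Since the global norm at $Q$ is the product of the local norms, this is a local check. If $t^e=s$ at $P$ over $Q$, the local norm of $f=t^aw$ is $(-1)^{a(e+1)}s^aN(w)$ with $N(w)(Q)=w(P)^e$. For $u=s^bv$, both sides then equal $(-1)^{abe}w(P)^{eb}v(Q)^{-a}$. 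Taking $u=z$ gives $\prod_P\{g_1,g_2\}_P=\prod_Q\{N(g_1),z\}_Q$. On $\PP^1$, bimultiplicativity reduces you to $\{c,z\}$ and $\{z-\lambda,z\}$, whose products over the relevant points ($0,\lambda,\infty$) equal $1$ by direct computation. This closes the algebraic route; otherwise, commit to the analytic one with the bookkeeping above.
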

If $D_1$ and $D_2$ are two $r$-torsion divisors with $rD_j=\div(g_j)$, the Weil reciprocity law ensures that the following quantity only depends on the linear equivalence class of $D_1$ and $D_2$, and is a root of unity:
\begin{align}
        W_r(D_1,D_2)& :=
        \prod_p (-1)^{D_1(p)D_2(p)} \left.\frac{g_1^{D_2(p)}}{g_2^{D_1(p)}}\right|_{p} \in \operatorname{Tor}_{r}(\mathbb G_m)=\mu_{r}.
     \end{align}

  This can be extended to the logarithmic Weil pairing, by virtue of the following proposition.

  \begin{prop}\label{prop:weil-reciprocity-law}
Let $C\to S$ be a log smooth curve and let $\alpha,\beta\in H^0(S,M_C^{\gp})$ sections of the logarithmic structure (which in particular induce logarithmic trivializations).
Then 
 $$\prod_i (-1)^{s_{\bar{\alpha}(p_i)}s_{\bar{\beta}(p_i)}} \left.\frac{\alpha^{s_{\bar{\beta}(p_i)}}}{\beta^{s_{\bar{\alpha}(p_i)}}}\right|_{p_i}=1\in H^0(S,\pi_*\O_C^\star)$$
 where the product is taken over the markings $p_i\colon S\to C$ carrying logarithmic structure and $s_{\bar{\alpha}(p_i)},s_{\bar{\beta}(p_i)}$ denote the slopes of $\bar{\alpha},\bar{\beta}\in  H^0(S,\pi_*\bar{M}_C^{\gp})$  along the corresponding unbounded edges.
\end{prop}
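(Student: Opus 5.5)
The plan is to reduce the statement to the classical Weil reciprocity law (Proposition~\ref{prop:weilclassic}) on each irreducible component of each fiber, and then show that the contributions from the nodes cancel pairwise. The tropical data (slopes along edges) will govern the exponents appearing at the nodes.

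\emph{Reduction to geometric points.} The expression is a global section of $\pi_*\O_C^\star$, hence a function on $S$. It therefore suffices to check the identity on geometric points. We may also pass to an étale cover and to a logarithmic modification, since the identity is stable under pullback. So we work over a log point $(\Spec k,Q)$ with fiber $C$, dual graph $\Gamma$, and components $C_v$ with normalizations $\widetilde C_v$.

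\emph{Local form of $\alpha$ and $\beta$.} Near a node $q_e$ with local equation $xy=t_e$, where $\bar t_e=\ell(e)\in Q$, a section $\alpha$ of $M_C^{\gp}$ is, étale locally, of the form $u\,x^{a}\,\tau$. Here $u$ is a unit, $\tau$ is lifted from $M_S^{\gp}$, and $a$ is the outgoing slope of $\bar\alpha$ at the branch of the chosen vertex. On the other branch, using $x=t_e/y$, the slope is $-a$.

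\emph{Componentwise reciprocity.} On each component $C_v$ we write $\alpha|_{C_v}=\tau_v(\alpha)\cdot g_v$, with $\tau_v(\alpha)=\bar\alpha(v)$ lifted to $M_S^{\gp}$ and $g_v$ a rational function on $\widetilde C_v$. The zeros and poles of $g_v$ lie at the preimages of nodes and markings, with orders given by the outgoing slopes. We do the same for $\beta$ with functions $h_v$. Applying Proposition~\ref{prop:weilclassic} to $g_v,h_v$ on $\widetilde C_v$ gives one identity for each vertex $v$. We then take the product over all vertices. The factors at marked points assemble exactly into the left-hand side, once we note that the constant factors $\tau_v$ contribute $\tau_v(\alpha)^{s_\beta}\tau_v(\beta)^{-s_\alpha}$. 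These constants must be absorbed, and the sum of $\beta$-slopes at a vertex is zero only for balanced functions, which $\alpha,\beta$ need not be. What remains to show is that the product of all nodal factors, together with the constant factors, equals $1$.

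\emph{Cancellation at nodes (main obstacle).} For a node $e$ joining branches $h,h'$ with slopes $(a,-a)$ for $\alpha$ and $(b,-b)$ for $\beta$, the two branch contributions are $(-1)^{ab}\,(g^b/h^a)|_{h}$ and $(-1)^{ab}\,(g'^{-b}/h'^{-a})|_{h'}$. Their product is a unit, computed from the leading coefficients. In the coordinates above, this equals a ratio of monomials in $t_e$ times the constant factors $\tau_v$, and the whole expression actually lives in $M_S^{\gp}$. We must check two things:
\begin{enumerate}
\item The image in $\bar M_S^{\gp}$ vanishes. This is the tropical identity $\sum_e \ell(e)\,(\text{slope products})+\sum_v(\bar\alpha(v)\,\mathrm{div}\,\bar\beta(v)-\bar\beta(v)\,\mathrm{div}\,\bar\alpha(v))=0$, which follows from summation by parts along edges: on each edge, $\bar\alpha(v')-\bar\alpha(v)=a\,\ell(e)$.
\item The unit part equals $1$. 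Here the signs $(-1)^{ab}$ coming from the two branches should cancel against the sign produced by $x\cdot y=t_e$ when we compare leading coefficients.
\end{enumerate}
The careful bookkeeping of these signs and of the choice of lifts $\tau_v$ is the delicate step. I would first verify it for a single node with two components, then conclude by a product over the edges. Since the total is an element of $\O^\star_S$ that is independent of the choices and equals $1$ over the smooth locus, where it reduces to the classical case, an alternative would be to use a deformation/continuity argument via the log smooth total space whenever $S$ is log smooth.
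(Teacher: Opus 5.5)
Your plan has the same architecture as the paper's proof: classical reciprocity on each component, pairwise cancellation at the nodes, and vanishing of the image in $\bar M_S^{\gp}$. However, the step that actually proves the statement is left as ``should cancel'', and it rests on a false premise.

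You say the constants $\tau_v(\alpha)^{s_{\bar\beta}}\tau_v(\beta)^{-s_{\bar\alpha}}$ cannot simply be dropped because $\alpha,\beta$ ``need not be balanced''. They are always balanced. In your own setup, $g_v=\alpha|_{C_v}/\tau_v(\alpha)$ is a rational function on the proper curve $\widetilde C_v$ with divisor $\sum_{h\vdash v}s_{\bar\alpha}(q_h)[q_h]$, summed over nodal branches and markings at $v$. Hence $\sum_{h\vdash v}s_{\bar\alpha}(q_h)=\deg\div(g_v)=0$, and likewise for $\beta$. This is the missing idea: it makes the vertex-wise identity independent of the lifts $\tau_v$, and it makes your identity (1) (the paper's Menelaus step) immediate.

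Your sign analysis in (2) is also mistaken. The two branch signs are $(-1)^{ab}$ and $(-1)^{(-a)(-b)}=(-1)^{ab}$, so they cancel each other. The substitution $x=t_e/y$ produces only powers of $t_e$, never a sign, so there is nothing else for these signs to cancel against.

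To close the argument, the paper evaluates the nodal factors in $M_C^{\gp}$ itself, not on the normalizations. Near $q_e$, the section $\alpha^{s_{\bar\beta}(q_h)}\beta^{-s_{\bar\alpha}(q_h)}$ has image $s_{\bar\beta}(q_h)\bar\alpha(v)-s_{\bar\alpha}(q_h)\bar\beta(v)\in\bar M_S^{\gp}$. It is therefore a unit times a section of $M_S^{\gp}$, and has a well-defined value at $q_e$. Since the slopes change sign, the factor for $h'$ is literally the inverse section, so the two half-edge factors cancel identically. The left-hand side thus equals $\prod_v\prod_{h\vdash v}(\cdots)|_{q_h}$. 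Each vertex product is then classical reciprocity for $g_v,h_v$, with the $\tau_v$'s dropping out by balancing.

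Your normalization route also closes, but only once balancing is in hand. Write $\tau_{v'}(\alpha)=c_e t_e^{a}\tau_v(\alpha)$ and $\tau_{v'}(\beta)=c'_e t_e^{b}\tau_v(\beta)$ with $c_e,c'_e$ units. Then the two branch contributions at $e$ multiply to $c_e^{b}(c'_e)^{-a}$. Balancing plus summation by parts shows that the $\tau$-factors at the markings multiply to $\prod_e c_e^{b}(c'_e)^{-a}$, so everything cancels.

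The deformation alternative does not substitute for this. It only applies when $S$ is log smooth, and it still needs the vanishing in $\bar M_S^{\gp}$ proved separately, because a section of $\bar M_S^{\gp}$ that vanishes over $S^\circ$ need not vanish.
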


\begin{proof}
For smooth curves, $\alpha,\beta$ are invertible functions on $C\setminus\{p_i\}$, and $\bar{M}_C^{\gp}\rvert_{p_i}=\mathbb Z$ so $s_{\bar{\alpha}(p_i)}$ (resp. $s_{\bar{\beta}(p_i)}$) is simply the order of $p_i$ as a pole/zero of $\alpha$ (resp. $\beta$), and the statement clearly reduces to the Proposition above.

\medskip

In general, the product takes values in $\Gmlog$. First, show that under the natural map $\Gmlog\to\Gmtrop$, this product is zero. 
For each marking $p_i$ we have that:

\[\bar{\alpha}(p_i)=(s_{\bar{\alpha}(p_i)}, \bar{\alpha}(v))\in \bar{M}_C^{\gp}\rvert_{p_i}=\mathbb Z\times \bar{M}_S^{\gp},\]
where the component $C_v$ indexed by the vertex $v$ is the one to which the marking $p_i$ is attached. 

We now take the image of $\frac{\alpha^{s_{\bar{\beta}(p_i)}}}{\beta^{s_{\bar{\alpha}(p_i)}}}$ in $\Gmtrop$. It is clear that for each $p_i$ the slope ($\ZZ$-component) in $\mathbb Z\times \bar{M}_S^{\gp}$ is zero;  we only need to argue that \[\sum^n _{i=1} s_{\bar{\beta}(p_i)}\bar{\alpha}(v)-s_{\bar{\alpha}(p_i)}\bar{\beta}(v)=0.\]
This is precisely the sum of the \emph{momentums} of the unbounded edges and the vanishing is the tropical Menelaus theorem proved for example in \cite[Proposition~39]{mikhalkin2017quantum}.

    \medskip
    
In order to show that this is $1$, we rewrite:
\[\prod_i  (-1)^{s_{\bar{\alpha}(p_i)}s_{\bar{\beta}(p_i)}}\left.\frac{\alpha^{s_{\bar{\beta}(p_i)}}}{\beta^{s_{\bar{\alpha}(p_i)}}}\right|_{p_i}=\prod_v\prod_{h\vdash v} (-1)^{s_{\bar{\alpha}(q_h)}s_{\bar{\beta}(q_h)}} \left.\frac{\alpha^{s_{\bar{\beta}(q_h)}}}{\beta^{s_{\bar{\alpha}(q_h)}}}\right|_{q_{h}},\]
where on the right-hand-side the first product is over the vertices $v\in V(\Gamma)$ of the tropicalization of the log curve and the second is over all half-edges $h$ incident to $v$, corresponding to either pre-images of nodes $q_e$ on the component $C_v\subseteq C $ or markings  $p_i\in C_v.$

For any node $q_e$ with $e=\{h,h'\}$, we denote by  $s_{\bar{\alpha}}(q_h),s_{\bar{\beta}}(q_h)\in\mathbb Z$  the slopes along $e$, outgoing from $v$ to which the half edge $h$ is incident.

   The equality simply follows from the fact that in  the product on the right-hand-side, for each edge $e=\{h,h'\}$ we have two contributions that cancels out since the slopes of $\bar{\alpha},\bar{\beta}$ have opposite sign at $q_h$ and $q_{h'}.$

   To conclude, we notice that looking at a fixed vertex $v$,  $\alpha,\beta$ define meromorphic functions which are invertible on $C_v\setminus\{q_h\}_{h\vdash v}$ with order of zeros and poles given by the collection of integral vectors $(s_{\bar{\alpha}(q_h)},s_{\bar{\beta}(q_h)})_{h\vdash v}.$ 
   Then by the Proposition recalled above,
   \[\prod_{h\vdash v} (-1)^{s_{\bar{\alpha}(q_h)}s_{\bar{\beta}(q_h)}} \left.\frac{\alpha^{s_{\bar{\beta}(q_h)}}}{\beta^{s_{\bar{\alpha}(q_h)}}}\right|_{q_{h}}=1.\]
\end{proof}

The proposition above gives us an effective tool to compute the logarithmic Weil pairing.

\subsubsection{Correlators for higher DR loci}
\label{subsub:corrhdr}

Let $C\to S$ be a log smooth curve and  $\L_1,\L_2$ two degree $0$ line bundles. For any $r\geqslant 1$, we can consider their powers $\L_1^{\otimes r}$ and $\L_2^{\otimes r}$. From now on, we drop the $\otimes$ to easen the notation. 

 Alternatively, one can think of having two line bundles, each endowed  with a natural root $r$th-root. Below, the main example of this set-up that we have in mind.

\begin{expl}
Consider $S=\Mgnbar$ or a logarithmic modification of it. We consider the following line bundles on the universal curve: 
$\L_1=\O(\tbfa)$ and $\L_2=\O(\tbfb)$, for $\tbfa,\tbfb\in\mathbb Z^n$ vectors whose entries sum to zero; their $r$ powers are $\O(r\tbfa)$ and $\O(r\tbfb)$.
On the other hand, given $\bfa,\bfb$ such that $r$ divides the $\gcd(a_i,b_i)_{i=1,\dots n}$, $\O(\frac{\bfa}{r})$ and $\O(\frac{\bfb}{r})$ are natural $r$th-roots of $\O(\bfa)$ and $\O(\bfb)$.
\end{expl}

\begin{expl}
    The context of \cite{blommecarocci2025DR} may be seen as a variation of this setting where $S=\bar{\M}_{g,n}(X,\beta)$, $\L_1=\O(\tbfa)$ and $\L_2=f^*L$ for $L$ some degree $0$ line bundle on $X$.
\end{expl}

 The DDR locus  $\DDRL(\L_1^r,\L_2^r)\to S$ defined above parametrizes
 log smooth curves $C_T\to T$ (for $T$ a log scheme over $S$) together with two logarithmic trivializations $\L_i^r\cong \O(\alpha_i)$ (see \eqref{eq:logtriv}).

 It follows by the modular interpretation that on the universal curve over $\DDRL(\L_1^r,\L_2^r),$ the line bundles
 $\L_1$ and $\L_2$ are logarithmically $r$-torsion, and thus define a section of $ \LogPic^0_{C/S}[r]\times \LogPic^0_{ C/S}[r]$ over the DDR locus.

From the above observation, it follows that we have a morphism:
\begin{equation}\label{eq:correlator}
\begin{array}{rcl}
   W_r\colon \DDRL(\L_1^r,\L_2^r) & \longrightarrow & \mu_r \\
    (C,\alpha_1,\alpha_2) & \longmapsto & W_r(\L_1,\L_2).
\end{array}
\end{equation}
This implies that the DDR locus is a union of open and closed sub-stack over which the value of the logarithmic Weil pairing is constant: 
\[  \DDRL(\L_1^r,\L_2^r)=\bigsqcup_{\theta\in\mu_r} \DDRL^\theta(\L_1^r,\L_2^r).\]

\begin{defi}
    We call the \emph{correlated} DDR-cycle the class in $\operatorname{CH}_*(S)\otimes \mathbb Q[\mu_r]$ given by
    \[\mathbf{DDR}(\L_1^r,\L_2^r)=\sum_{\theta\in\mu_r} \DDR^\theta(\L_1^r,\L_2^r)\cdot (\theta),\]
    where $\DDR^\theta(\L_1^r,\L_2^r)=\varepsilon_*[\DDRL^\theta(\L_1^r,\L_2^r)]$, and $(\theta)\in\QQ[\mu_r]$.
\end{defi}

\begin{expl}
In the case  $S=\Mgnbar,$ $\L_1=\O(\tbfa)$ and $\L_2=\O(\tbfb)$, we obtain a refinement of the ``standard'' DDR-cycle $\DDR(\O(\bfa),\O(\bfb))$, where $\bfa=r\tbfa$ and $\bfb=r\tbfb$.

As explained in Example~\ref{ex:rublog}, the latter can be identified with the moduli space of rubber logarithmic maps $R_{\Lambda:=(g,\bfa,\bfb)}(\Gmlog^2)$ considered in \cite{ranganathan2024logarithmic} Thus, looking at the composition  \[\M_{\Lambda}(X_{\Sigma})\to \M_{\Lambda}(\Gmlog^2)\to R_{\Lambda}(\Gmlog^2),\] 
we obtain a decomposition of the moduli stack of logarithmic stable maps to a toric surface: 
\[ \M_{\Lambda}(X_{\Sigma})=\bigsqcup_{\theta\in\mu_r}\M^\theta_{\Lambda}(X_{\Sigma}).\]
We will go back to log GW invariants of toric surfaces in the last section, where we define (in the obvious way) correlated log GW invariants and prove that these satisfy a multiple cover formula.
\end{expl}

\subsection{Logarithmic double ramification cycles}\label{sec:logDR}

\subsubsection{}The definition of double ramification cycles provided by Holmes  \cite{holmes2021extending} via resolutions of the indeterminacy of the Abel-Jacobi map  naturally yields DR classes on logarithmic blow-ups of $S$ which push-forward to $\DR(\L)\in\mathrm{CH}_*(S)$ as defined above.

\medskip

The construction goes as follow:
the map $\varphi_\L\colon S\to\mathfrak{Pic}_{g,n,0}$ ``factors'' rationally through the Jacobian $\mathfrak{Jac}$, the multidegree $\mzero$ part of $\mathfrak{Pic}_{g,n,0}$:
\[\varphi_\L\colon S\dashrightarrow\mathfrak{Jac}.\]

If we consider (the $\mathbb G_m$ rigidification), $\mathfrak{Jac}$  is a separated, smooth algebraic space (not proper)  over $S$ whose fibers are semi-abelian varieties. In particular the unit section $e\to\mathfrak{Jac}$ is a closed regular embedding and we have a well-defined class $[e]\in \mathrm{CH}^g(\mathfrak{Jac})$.

\medskip

The map $\varphi_\L$ 
is only defined on the open $\U\subseteq S$ where $\L$ has multidegree $\mzero$ on each fiber, so that we can consider the fiber product with the $0$-section $e$ of $\mathfrak{Jac}$:
\bcd 
 & \U\times_\mathfrak{Jac}e \ar[dl]\ar[r]\ar["\epsilon",d] & e \ar[d] \\
S & \U \ar[l]\ar["\varphi_\L",r] & \mathfrak{Jac}.
\ecd
However, the map $\U\times_\mathfrak{Jac}e\to S$ is rarely proper, and does not usually provide a class representing the double ramification cycle as defined before.

\medskip

By \cite{holmes2021extending}, it is possible to resolve the Abel-Jacobi map by taking an iterated logarithmic blow-up $S_\L\to S$  in such a way that the fiber product  $\widetilde{\U}\times_{\mathfrak{Jac}} e$ (where $\widetilde{\U}$ is the open in $S_{\L}$ over which the Abel-Jacobi map extend) is proper in  $S_\L$, and thus  over $S$.

\medskip

In particular, we get a class of the expected codimension in the Chow group of $S_\L$ and
 thus a class \[\operatorname{logDR}(\L)=\varphi_{\L}^{!}[e]\in \operatorname{logCH}(S).\] This class pushes forward to usual DR-class in the Chow homology of $S$.
There are  in fact a lot of choices to choose a ``good enough'' modification $S_{\L}\to S$ \cite[Section~1.5]{chiodo2024hodge} such that \[\mathrm{logDRL}(\L)=\widetilde{\U}\times_{\mathfrak{Jac}}e \hookrightarrow
S_{\L}\] is a closed embedding. We describe below a criterion and construction for such  models. 
On the other hand, we  will see that 
the open $\widetilde{\U}$ can be named without making choices.

\subsubsection{} It is natural to wonder if there exists a lift of Pixton's formula for the logarithmic DR cycle, enabling its computation.
A positive answer to this question is provided in \cite{holmes2025logDR}, using the fact that in suitable blow-ups it is possible to represent the logDR-cycle via the universal DR-cycle formula of \cite{bae2023pixton}.

\medskip

A criterion for when such a representation is possible is given in \cite{holmes2022intersection} and \cite[Proposition~47]{holmes2025logDR} using so-called \textit{almost twistable families}, recalled in Section \ref{sec-almost-twistability}.

\medskip

One way is to use stability conditions and compactified Jacobians, whose existence is guaranteed as soon as there is a section \cite[Section~4]{holmes2025logDR} (see also, for example, \cite{melo2015compactifications} and references therein). One can construct explicitly some $S_\L^\sigma$, equipped with a family of (quasi-stable) curves $C_\L^\sigma$ and a line bundle $\L^\sigma=\L(\alpha^\sigma)$ such that the universal DR-cycle for $\L^\sigma$ provides a representative of $\operatorname{logDR}(\L)$. We recall this construction below in Section \ref{sec:logblowupfromstability}. 

\subsubsection{}\label{sec-almost-twistability}

We closely follow \cite[Section 5.2]{holmes2025logDR}, 
and recall the \textit{(almost) twistability criterion}.

Let $C/S$ be a log smooth curve over a log smooth algebraic stack  endowed with a degree $0$ line bundle $\L$ and let $\mathfrak{Jac}\to S$ denote the multidegree $\mzero$ part of the relative Picard space; in particular $\mathfrak{Jac}\to S$ is finite type, smooth and separated, but not proper.

\begin{defi}\cite[Section~1.6]{holmes2022intersection}
    A family $(C/S,\L)$ is said \textit{twistable} if there exists a strict  PL function $\alpha$ on the tropicalization $\Gamma\to S$, i.e. $\alpha\in \rm H^0(S,\pi_*\bar{M}_C^{\gp})$ such that $\L(\alpha)$ has multidegree $\mzero$.
\end{defi}

By \cite[Lemma 4.8]{holmes2022intersection}, if a family is twistable, the twisted line bundle $\L(\alpha)$ provides a section $S\to\mathfrak{Jac}$, and in this situation, 
\[\DR(\L(\alpha))=\operatorname{logDR}(\L)\in\mathrm{logCH}(S).\]

However, as the authors of $[ibid]$ themselves observe, the notion of twistability is far too restrictive and does not provide a way in general to determine the $\mathrm{logDR}$ cycle using $\mathrm{DR}.$
Because of this, the weaker notion of \textit{almost twistability} is introduced:

\begin{defi}\cite[Definition 4.10]{holmes2022intersection}
    A family $(C/S,\L)$  is \textit{almost twistable} if there exists a dense open set $i\colon U\hookrightarrow S$ and a strict PL function $\alpha$ such that $\L(\alpha)$ has multidegree $\mzero$ on $U$ and such that the section $U\xrightarrow{i\times\varphi_{\L(\alpha)}}S\times_{\mathfrak{M}}\mathfrak{Jac}$ given by $\L(\alpha)$  is a closed immersion.
\end{defi}


The importance of almost twistability is explained by  the following Proposition:

\begin{prop}{\cite[Prop 44]{holmes2025logDR}}
    If $(C/S,\L)$  is almost twistable, then the DR-cycle $\DR(\L(\alpha))$ is a representative of $\operatorname{logDR}(\L)$. In particular, we can compute the cycle applying the universal DR-cycle formula: $\operatorname{logDR}(\L)=\varphi_{\L(\alpha)}^*\DR$
\end{prop}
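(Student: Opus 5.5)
The plan is to compare two cycles on a common log modification: the Gysin pull-back defining $\operatorname{logDR}(\L)$ along the unit section $e\to\mathfrak{Jac}$, and the pull-back of the universal class $\DR\in\operatorname{CH}^{\mathrm{op},g}(\mathfrak{Pic}_{g,n,0})$ along $\varphi_{\L(\alpha)}$. First I will use the universal DR formula of \cite{bae2023pixton}. For any family with line bundle $\L(\alpha)$ it gives $\varphi_{\L(\alpha)}^*\DR\cap[S]=\epsilon_*\vir{\DRL(\L(\alpha))}$, where $\DRL$ is the Marcus--Wise fiber product with $\Div_g$. So it suffices to identify $\vir{\DRL(\L(\alpha))}$, pushed to $S$, with the class obtained by refined intersection of the section $U\to S\times_{\mathfrak M}\mathfrak{Jac}$ with $e$, and then to check that this class is the image of $\operatorname{logDR}(\L)$.

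On the open set $U$ the bundle $\L(\alpha)$ has multidegree $\mzero$. Hence $\varphi_{\L(\alpha)}|_U$ factors through $\mathfrak{Jac}$, and over $U$ the locus $\DRL(\L(\alpha))$ contains the component where the tropical twist is zero. That component is exactly $U\times_{\mathfrak{Jac}}e$, with obstruction theory pulled back from the regular embedding $e\hookrightarrow\mathfrak{Jac}$. I will use the closed immersion hypothesis on $U\to S\times_{\mathfrak M}\mathfrak{Jac}$ to show that $U\times_{\mathfrak{Jac}}e$ is closed in $S$, and hence proper over $S$. In the language of \cite{holmes2021extending}, $U$ is then contained in the open $\widetilde{\U}$ where the Abel--Jacobi map extends, and on this locus the Holmes class $\varphi^![e]$ is computed by intersecting with $e$ over $U$. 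This gives one candidate class supported on a closed substack.

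The key remaining step is to show that $\DRL(\L(\alpha))$ has no other contributions: every point of $\DRL(\L(\alpha))$ with a nonzero twist must lie over $S\setminus U$, and its virtual contribution must vanish. I expect this to be the main obstacle. My first approach is an excess/deformation argument. One degenerates the section to $S\times_{\mathfrak M}\mathfrak{Jac}$ and uses that the closure of $U\times_{\mathfrak{Jac}}e$ is already closed, so the virtual class localizes to the zero-twist locus by properness. Concretely, I would compare $\Div_g\to\mathfrak{Pic}$ with the closure of $e$ in $\mathfrak{Pic}$ near multidegree $\mzero$, as in \cite[Lemma 4.8]{holmes2022intersection}, and use the Marcus--Wise obstruction theory to show that the nonzero-twist strata contribute classes pushed forward from lower-dimensional boundary loci, where they vanish for dimension reasons. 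If this works, the two classes agree in $\operatorname{CH}_*(S)$, compatibly with all further log blow-ups. Pulling back to any refinement then gives $\operatorname{logDR}(\L)=\varphi_{\L(\alpha)}^*\DR$ in $\operatorname{logCH}(S)$.
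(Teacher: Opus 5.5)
Your reduction to the Marcus--Wise locus via the universal formula is fine. So is the observation that over $U$ a trivialization $\L(\alpha)\cong\O(\beta)$ forces $\beta$ to be balanced, hence constant, so that $\DRL(\L(\alpha))\times_S U=Z:=U\times_{\mathfrak{Jac}}e$, which is closed in $S$. The gap is the step you flag as the main obstacle, and the mechanism you propose for it would not work. Closedness of $Z$ only shows that $Z$ is open and closed in $\DRL(\L(\alpha))$. So the virtual class splits as $\vir{Z}$ plus the contributions of the remaining components, and closedness says nothing about those. A dimension count cannot kill them either: they are classes of dimension $\dim S-g$ supported over $S\setminus U$, which can have dimension up to $\dim S-1$, so for $g\geq 1$ nothing forces them to vanish. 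Their absence has to come from the hypothesis, not from dimension. The second half of your plan has the same hole. To know that $\vir{Z}$ represents $\operatorname{logDR}(\L)$ it is not enough that $U\subseteq\widetilde{\U}$; you need $\widetilde{\U}=\mathbf{Twist}^{\mzero}(S,\L)$ to have no points over $S\setminus U$. Otherwise $\operatorname{logDR}(\L)$ has pieces on exceptional loci of a log modification of $S$, which need not be pulled back from $S$, and no identity of classes on $S$ alone can capture them.

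The missing idea is that the closed-immersion hypothesis excludes such points altogether, by a valuative argument. First, density of $U$ and closedness of its image force $U$ to be the whole open locus where $\L(\alpha)$ has multidegree $\mzero$, so $S^\circ\subseteq U$. Now let $(T\to S,\gamma)$ be a point of $\mathbf{Twist}^{\mzero}(S,\L)$ over $s\notin U$; a point $(T,\beta)$ of $\DRL(\L(\alpha))$ gives one with $\gamma=\alpha-\beta$. Choose a local homomorphism $\bar M_T\to\NN$. By log smoothness of $S$ there is a log trait $\Spec R\to S$ with closed point $s$, generic point in $S^\circ$, inducing $\bar M_{S,s}\to\bar M_T\to\NN$, so $\gamma$ descends to a strict PL function on $C_R$. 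Then $\L(\gamma)$ has multidegree $\mzero$ on $C_R$ and agrees with $\L(\alpha)$ on the generic fibre. Hence the section $U\to S\times_{\mathfrak{M}}\mathfrak{Jac}$ extends over the trait, and the closure of its image meets the fibre over $s\notin U$, contradicting the closed-immersion hypothesis. Therefore $\mathbf{Twist}^{\mzero}(S,\L)=U$ and $\DRL(\L(\alpha))=Z$ on the nose, both carrying the refined Gysin pull-back of $e$. So $S$ is itself a model on which $\operatorname{logDR}(\L)$ is defined, and there it equals $\vir{Z}=\DR(\L(\alpha))$. Compatibility with further log modifications is then just the model-independence of $\operatorname{logDR}$, and no excess or deformation analysis is needed.
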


Furthermore, as explained in \cite[Section~4.5]{holmes2022intersection} starting with any family $(C/S,\L)$ it is always possible to find a log alteration $\widetilde{S}\to S$ and a subdivision $\widetilde{C}\xrightarrow{\psi}C\times_S\widetilde{S}$ such that $(\widetilde{C}/\widetilde{S},\psi^*\L)$ is almost twistable.

\medskip

Before recalling how to explicitly construct the almost twistable model $(\widetilde{C}/\widetilde{S},\psi^*\L)$ using the theory of stability conditions, we recall a useful criterion proved in
\cite{holmes2025logDR}, providing a sufficient condition for a family to be almost twistable. 

\medskip

Let $(C/S,\L)$ be as before, We recall the definition of categories (fibered over $\mathrm{LogSch}/S$) of twists:
\begin{itemize}
    \item $\mathbf{Twist}(S,\L)=\{(T/S,\widehat{C}\to C_T,\alpha)\}$, where $T\to S$ is a morphism of log schemes, $\widehat{C}$ is a destabilization of $C_T$ and $\alpha$ a sPL (strict piece-wise linear) function on $\widehat{C}$ vanishing on the vertex carrying the first marking\footnote{This condition is imposed to fix the translation by $\bar{M}_S^{\rm{gp}}$ and thus get a representable algebraic stack};
    \item $\mathbf{Twist}^\mzero(S,\L)$ is the subcategory of \textit{trivial} twists where $\widehat{C}\to C_T$ is an isomorphism (i.e. no destabilization) and $\underline{\deg}\ \L(\alpha)=\mzero$.
\end{itemize}

\begin{rem}
    The stack over log schemes $\mathbf{Twist}(S,\L)$ generalizes the stack $\Div_g$ of Marcus-Wise, defined for $S=\mathfrak{M}_{g,n}.$ In particular a word by word repetition of \cite[Theorem~4.2.4]{marcuswiselog} proves that it is representable by an algebraic stack and log \'etale over over $S$. The same holds for  $\mathbf{Twist}^\mzero(S,\L)$ which is an open in the latter.
\end{rem}

As  explained in \cite[Section~1.4]{chiodo2024hodge}, one could define $\mathrm{LogDRL}(\L)$ as the stack over Log schemes parametrizing $(C_T/T\to S,\beta, \mathcal F\in\Pic(T))$ such that $\L(\beta)\cong\pi^*\mathcal F,$ i.e. $\L(\beta)$ is fiberwise trivial\footnote{To be precise, in \cite[Section~1.4]{chiodo2024hodge} the authors require the sPL function $\beta$ to be reduced in the sense of $[ibid,\text {Section~1.3.2}]$. We explain at the end of Remark~\ref{rem:tropDR} the difference and why they do not matter for the class.}.

Then its clear from this description that we have a closed embedding
\[\mathrm{LogDRL}(\L)\hookrightarrow \mathbf{Twist}^\mzero(S,\L), \]
and that this is in fact the pull-back of the unit section $e\hookrightarrow\mathfrak{Jac}_{C/S}$ under the natural map 
\[\mathrm{aj}\colon \mathbf{Twist}^\mzero(S,\L)\to \mathfrak{Jac}_{C/S}.\]

In other words, $\mathbf{Twist}^\mzero(S,\L),$ is the open $\widetilde{\mathcal  U}$ of the previous section.

From this description is clear that $\mathrm{LogDRL}(\L)$ is endowed, by Gysin pull-back, with a natural virtual class which we will simply denote by $\mathrm{logDR}(\L).$


\medskip

Proposition~\ref{prop:twists} give  us a really explicit way to check when some compact model of $\mathbf{Twist}^\mzero(S,\L),$ is ``good enough'' to contain $\mathrm{LogDRL}(\L)$ as a closely embedded substack. However, for some of our applications below it is sufficient the following slightly weaker criterion.

\begin{lem}\cite[Section~1.5]{chiodo2024hodge}\label{lem:fineenoughsubdivision}
   Let $\widetilde{S}\to S$ be a logarithmic modification such that the proper (by definition) morphism $p\colon \mathrm{LogDRL}(\L)\to S$ factors as $i\colon \mathrm{LogDRL}(\L)\to \widetilde{S}$ then if $i$ is \emph{strict} then it is a closed embedding and the following cycle represents the log DR class:
   \[i_*\mathrm{logDR}(\L)\in\mathrm{CH}_*(\widetilde{S}).\]
\end{lem}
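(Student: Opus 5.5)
The plan is to first show that $i$ is a closed embedding and then identify the pushed-forward virtual class with $\mathrm{logDR}(\L)$. For the embedding, recall that $p\colon\mathrm{LogDRL}(\L)\to S$ is proper, being a closed substack of the proper model of $\mathbf{Twist}^\mzero(S,\L)$ cut out by the unit section. The logarithmic modification $\widetilde S\to S$ is separated (indeed proper and representable), so the factorization $i$ is itself proper. Next I would show that $i$ is a monomorphism of underlying stacks. Both $\mathrm{LogDRL}(\L)\to S$ and $\widetilde S\to S$ are monomorphisms in the category of fs log stacks. For $\widetilde S\to S$ this is recalled in Section~\ref{sec:loggeometry}; for $\mathrm{LogDRL}(\L)$ it holds because the twist $\beta$ is determined up to constants by $\L$ and the log structure, as in \cite{marcuswiselog}. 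Hence $i$ is a monomorphism of log stacks. Since $i$ is strict, a $T$-point of $\widetilde S$ with the pulled-back log structure lifts to $\mathrm{LogDRL}(\L)$ in at most one way, so the underlying morphism is a monomorphism of stacks. A proper monomorphism is a closed immersion, which gives the first claim.

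For the class, I would compare obstruction theories. By construction, $\mathrm{LogDRL}(\L)$ is the pullback of the unit section $e\hookrightarrow\mathfrak{Jac}$ along $\mathrm{aj}\colon\mathbf{Twist}^\mzero(S,\L)\to\mathfrak{Jac}$, and its virtual class is $\mathrm{aj}^![\mathbf{Twist}^\mzero]$. Strictness of $i$ means that, in a neighbourhood of the image, $\widetilde S$ and $\mathbf{Twist}^\mzero(S,\L)$ are both log étale over $S$ and agree on the relevant locus. More precisely, the open $\widetilde{\U}\subseteq\widetilde S$ over which the twist $\beta$ is defined as a strict PL function maps log étale and strictly to $\mathbf{Twist}^\mzero$, hence étale. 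Pulling back along this étale map identifies $\widetilde{\U}\times_{\mathfrak{Jac}}e$ with $\mathrm{LogDRL}(\L)$, with compatible Gysin classes. Pushing forward along the proper map $i$ then gives $i_*\mathrm{logDR}(\L)=\varphi_\L^![e]$ computed on $\widetilde S$. This is the class representing $\mathrm{logDR}(\L)$ in $\mathrm{logCH}(S)$, by the definition in Section~\ref{sec:logDR}, provided it is compatible under further log modifications. That compatibility follows because strictness persists after pulling back to any further subdivision of $\widetilde S$.

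The main obstacle is the step deducing that the map $\widetilde{\U}\to\mathbf{Twist}^\mzero$ is étale near the image of $i$, that is, that strictness of $i$ forces the tropical twist to be strict PL on the cones of $\widetilde S$ meeting the image. I would check this cone by cone: strictness says the monoids of $\mathrm{LogDRL}$ and $\widetilde S$ agree at each point of the image. Since $\beta$ is a section of $\bar M_C^{\gp}$ over $\mathrm{LogDRL}$, it is then a strict PL function on the corresponding cone of $\widetilde\Sigma$, which yields the required local lift.
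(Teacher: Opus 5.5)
The paper gives no argument for this lemma; it imports it from \cite[Section~1.5]{chiodo2024hodge}. Your proposal supplies a proof, and it is correct in substance.

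\textbf{The closed embedding.} This half is the standard argument and is fine.
\begin{itemize}
\item $p=\pi\circ i$ is a monomorphism of fs log stacks, because a normalised multidegree-$\mzero$ twist is unique: a balanced sPL function with values in a sharp fs monoid is constant.
\item Strictness upgrades a log monomorphism to a monomorphism of underlying stacks: pull $M_{\widetilde S}$ back to a test scheme and lift both maps strictly.
\item A proper monomorphism is a closed immersion.
\end{itemize}
Properness of $p$ should simply be taken as the hypothesis it is. Deriving it from $\mathrm{LogDRL}$ being closed in a proper model of $\mathbf{Twist}^\mzero$ presupposes it.

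\textbf{The class.} Your key local step is right. Since $\mathbf{Twist}^\mzero\to S$ is a log monomorphism, the composite $\mathrm{LogDRL}\xrightarrow{i}\widetilde\U\to\mathbf{Twist}^\mzero$ is the inclusion. So at $y=i(x)$ the composite $\bar M_{\mathbf{Twist}^\mzero,x}\to\bar M_{\widetilde S,y}\to\bar M_{\mathrm{LogDRL},x}$ is an isomorphism, and the second map is one by strictness of $i$. Hence $\widetilde\U\to\mathbf{Twist}^\mzero$ is strict and log \'etale, so \'etale (indeed an open immersion), near $i(\mathrm{LogDRL})$. The Gysin pullbacks of $[e]$ therefore agree there.

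Two steps need repair.

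First, your middle paragraph claims strictness on all of $\widetilde\U$. That fails in general: $\widetilde S$ may subdivide cones of $\mathbf{Twist}^\mzero$ that $\mathrm{LogDRL}$ does not meet. So to identify $\widetilde\U\times_{\mathfrak{Jac}}e$ with $\mathrm{LogDRL}$, you must also show it has no points outside $i(\mathrm{LogDRL})$. This holds: such a point $y$, with its log structure from $\widetilde S$, defines a log point of $\mathrm{LogDRL}$, and its image under $i$ must equal $y$ because $\widetilde S\to S$ is a log monomorphism.

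Second, your final justification is false. A further subdivision $\widetilde S'\to\widetilde S$ may subdivide a cone onto which $\mathrm{LogDRL}$ maps isomorphically. Then $\mathrm{LogDRL}$ does not even factor through $\widetilde S'$, so strictness does not ``persist''. What does persist is that the fs base change $\mathrm{LogDRL}\times^{\mathrm{fs}}_{\widetilde S}\widetilde S'$ is strict and closed in $\widetilde S'$. Its Gysin class equals the pullback of $i_*[\mathrm{LogDRL}]^{\mathrm{vir}}$ by commutativity of refined Gysin maps. More simply, independence of the model is part of the definition of $\mathrm{logDR}(\L)\in\mathrm{logCH}(S)$ and can be cited. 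All you then need is that $\widetilde S$ is an admissible model, which your argument establishes.
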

Since the condition of being strict can be read off from the induced map on the tropicalization, this give a tropical point of view on the ``good enough'' model needed and yet another way to describe the open $\widetilde{\mathcal U}.$ 
In Remark~\ref{rem:tropDR} we explain this in the case of $\Mgnbar.$ The general case can be deduced by this by base change.

\begin{prop}\cite[Prop 47]{holmes2025logDR}\label{prop:twists}
    If $X\hookrightarrow\mathbf{Twist}(S,\L)$ is an open immersion containing $\mathbf{Twist}^\mzero(S,\L)$ and $X$ is separated, then $(X, \widehat{C}_X,\L(\alpha))$ is an almost twistable family, where $\widehat{C}_X$ and $\alpha$ are determined by the map from $X$ to the category of twists.
\end{prop}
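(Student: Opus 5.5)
The plan is to take as the dense open the locus where the twist is trivial, $U:=X\times_{\mathbf{Twist}(S,\L)}\mathbf{Twist}^\mzero(S,\L)$, which is just $\mathbf{Twist}^\mzero(S,\L)$ since $X$ contains it. I will then show that the section $U\to X\times_{\mathfrak{M}}\mathfrak{Jac}$ induced by $\L(\alpha)$ is a closed immersion. The argument has three steps: density of $U$, the immersion property, and closedness via the valuative criterion. Separatedness of $X$ enters only in the last step.

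\emph{Density.} The stack $\mathbf{Twist}(S,\L)$ is log \'etale over the log smooth $S$, so $X$ is log smooth. Hence the locus of $X$ where the log structure is trivial is dense. At such points the tropical curve has no edges, so $\widehat{C}=C$ and a strict PL function normalised to vanish at the first marking is $0$. The multidegree is then just the total degree, which is $0$. These points therefore lie in $\mathbf{Twist}^\mzero(S,\L)$, so $U$ is dense. On $U$ the family is $C$ itself and $\L(\alpha)$ has multidegree $\mzero$, which gives the map $\varphi\colon U\to X\times_{\mathfrak{M}}\mathfrak{Jac}$.

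\emph{Immersion.} The composite of $\varphi$ with the projection to $X$ is the open immersion $U\hookrightarrow X$. Therefore $\varphi$ is a monomorphism, locally of finite type, and an immersion onto its image, since it is a section-like map over an open set. It remains to prove that $\varphi$ is universally closed; a proper monomorphism is then a closed immersion.

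\emph{Closedness via the valuative criterion.} Let $\Delta=\Spec R$ be a trait with generic point $\eta$. Suppose $\eta$ maps to $U$, i.e.\ to a trivial twist $(C_\eta,\alpha_\eta)$, and suppose the closed point extends to $\Delta\to X\times_{\mathfrak{M}}\mathfrak{Jac}$. This gives a limit point $x\in X$ and a limit $J\in\mathfrak{Jac}$ of the multidegree $\mzero$ bundle $\L(\alpha_\eta)$. I want to show $x\in U$. The first task is to use $J$ to build a second extension $\Delta\to\mathbf{Twist}^\mzero(S,\L)\subseteq X$ of the same generic point. Both $J$ and the limit of $\L$ in $\mathfrak{Pic}$ live on the non-destabilised curve $C_\Delta$. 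So $J\otimes\L^{-1}$ is a line bundle on $C_\Delta$ that is trivial on the generic fibre. It is therefore of the form $\O(\beta)$ for a strict PL function $\beta$ on the tropicalisation of $C_\Delta$, after possibly a log modification or base change of the trait, which the valuative criterion for log stacks permits. Normalising $\beta$ at the first marking, $(C_\Delta,\beta)$ is a trivial twist with $\underline{\deg}\,\L(\beta)=\mzero$, i.e.\ a $\Delta$-point of $\mathbf{Twist}^\mzero(S,\L)$ agreeing with $\alpha_\eta$ at $\eta$. Since $X$ is separated, this extension and $x$ must coincide, so $x\in\mathbf{Twist}^\mzero(S,\L)=U$. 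This proves closedness.

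\emph{Expected obstacle.} The main obstacle is the step producing $\beta$. One must check that the twisting function recovered from $J\otimes\L^{-1}$ is a strict PL function on the original curve, requiring no destabilisation, and that any base change of the trait needed for this is allowed by the log valuative criterion. My first attempt would be to use the exact sequence relating $\Pic$ and $\pi_*\bar{M}^{\gp}_C$ recalled in Section~\ref{sec:fixmultidegree}: a fibrewise-trivial-generically line bundle on a family over a trait is a twist by a vertical divisor supported on the special fibre, which corresponds to a strict PL function.
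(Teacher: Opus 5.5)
The paper does not prove this statement; it is quoted from \cite{holmes2025logDR}, so your argument has to stand on its own. Its architecture is the natural one: density of $\mathbf{Twist}^\mzero(S,\L)$, then $\varphi$ is an immersion because it is a section over an open, then closedness by comparing two extensions over a trait using separatedness of $X$. The closedness step, however, has a gap as written.

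You allow an arbitrary trait with $\eta\mapsto U$ and then assert that $J\otimes\L^{-1}$ is trivial on $C_\eta$. That is only true when $\eta$ lands in the dense open $U^\circ\subseteq U$ where the log structure is trivial. At a boundary point of $U$ the twist $\alpha_\eta$ is a non-zero sPL function with values in $\bar M^{\gp}_\eta$. There $J_\eta\cong\L(\alpha_\eta)$ and $J\otimes\L^{-1}|_{C_\eta}\cong\O(\alpha_\eta)$, so your construction of $\beta$, and with it the comparison with $x$, does not go through.

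The repair is to avoid proving universal closedness trait by trait. Since $\varphi$ is already an immersion, it suffices to show that $\varphi(U)$ is closed. We have $\overline{\varphi(U)}=\overline{\varphi(U^\circ)}$, and $\varphi(U^\circ)$ is locally constructible in a locally noetherian stack. Hence every point of the closure is the special point of a trait whose generic point lies in $\varphi(U^\circ)$, and you only need to run your argument for such traits. At the end you must also check that the special point $(x,J_0)$ equals $\varphi(x)$, not merely that $x\in U$. This holds because $x$ is the special point of the $\beta$-extension, so $\L(\alpha_x)=\L(\beta_0)\cong J_0$.

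Once $\eta\in U^\circ$, the step you flag as the main obstacle closes with no log modification and no base change. The input is log regularity, not the log-Picard exact sequence you cite.
\begin{itemize}
\item Give $\Delta$ its divisorial log structure. Every section of $M_S$ is a unit at the image of $\eta$, so $\Delta\to S$ lifts to a log map.
\item Then $C_\Delta$ is log smooth over a log regular base, hence log regular. In particular it is normal with $M^{\gp}_{C_\Delta}=j_*\O^*_{C_\eta}$ for the vertical log structure.
\item Therefore the divisor of a rational section trivialising $J\otimes\L^{-1}$ on $C_\eta$ is a global section of $\bar M^{\gp}_{C_\Delta}$. That is, it is an sPL function $\beta$ on the unsubdivided tropical curve. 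Concretely, at a node $xy=t^{\ell}$ a divisor $aD_1+bD_2$ on the two local components of the special fibre is Cartier iff $a\equiv b \bmod \ell$, which is exactly integrality of the slope.
\end{itemize}
Finally, $J$ a priori lives on the destabilisation $\widehat C_\Delta$, not on $C_\Delta$. You can descend it, since it has degree $0$ on the exceptional components. Alternatively, run the same argument on $\widehat C_\Delta$: because $\L(\beta)$ has degree $0$ at the exceptional vertices, $\beta$ is linear across them and descends to $C_\Delta$. With these repairs your proof goes through.
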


In Sections \ref{sec:logblowupfromstability} and \ref{sec:logblowupfromstability2}, we provide explicit applications of this criterion.

\medskip

We will say that a logarithmic modification of $\widetilde{S}\to S$ \emph{supports a representative} for  $\mathrm{logDR}(\L)$ when one of the above criteria is satisfied. When $\widetilde{S}$ comes equipped with an explicit almost twistable family, $(\widetilde{C}/\widetilde{S},\L,\alpha)$ we will make use of the identification $\mathrm{logDR}(\L)=\mathrm{DR}(\L(\alpha)).$

\subsubsection{}\label{sec:logblowupfromstability} We consider the case of $\C_{g,n}\to\Mgnbar$ with the line bundle $\O(\bfa)$. We follow \cite[Section~4]{holmes2025logDR}. Given a \emph{small non degenerate stability condition} $\sigma$ \cite[Definition~28]{holmes2025logDR}, we get a \emph{compactified Jacobian} $\mathcal P^{\sigma}$ parametrizing quasi-stable models $C^\sigma\to C$ together with a $\sigma$-stable line bundle \cite[Definition~30]{holmes2025logDR} on $C^{\sigma}$. The latter is a logarithmically smooth algebraic stack with log structure over $\Mgnbar$ and admits a log \'etale map 
    \[\mathcal P^{\sigma}\to \LogPic^0_{\C_{g,n}/\Mgnbar}.\]
Then, we take $\bar{\M}^\sigma_{g,\bfa}$ to be the following fs fiber product:
\bcd
\bar{\M}^\sigma_{g,\bfa}\ar[d,"\rho"]\ar[r] & \mathcal P^{\sigma}\ar[d]\\
\Mgnbar \ar[r, "\varphi_{\O(\bfa)}"] & \LogPic^0_{\C_{g,n}/\Mgnbar}.
\ecd
      
From such description, $\bar{\M}^\sigma_{g,\bfa}$ comes with a universal curve $\C^\sigma_{g,\bfa}$ which is a destabilization of $\rho^*\C_{g,n}$ and a sPL-function $\alpha^\sigma$ such that the line bundle $\L^{\sigma}_{\bfa}=\O(\bfa)(\alpha^\sigma)$ is $\sigma$-stable. In particular, we have a natural map
$$\Mgasigma\to\mathbf{Twist}(\Mgnbar,\O(\bfa)),$$
determined by the quasi-stable model $\C^\sigma_{g,\bfa}\to\C_{g,n}$ and the sPL function  $\alpha^\sigma$.

We sketch the proof given in \cite[Proposition~48]{holmes2025logDR} that $\bar{\M}^\sigma_{g,\bfa}$  contains $\mathbf{Twist}^\mzero(\Mgnbar,\O(\bfa))$. Assume $\O(\bfa)(\alpha)$ has multidegree $\mzero$ for some $\alpha$. Since $\O(\bfa)(\alpha^\sigma)$ is $\sigma$-stable, it is also multidegree $\mzero$. 
Hence, by uniqueness of the $\sigma$-stable limit, $\alpha$ is actually given by $\alpha^\sigma$. 

Moreover, by [\textit{ibid}, Theorem~35], $\Mgasigma$ is separated. 
In fact, the theorem states that $\Mgasigma\to\Mgnbar$ is a log modification obtained from a subdivision. This implies that it is proper, log \'etale, relatively representable and birational since $\Mgnbar$ is smooth.

\medskip

We can apply \cite[Prop 47]{holmes2025logDR}, to conclude that
\[\varphi^*_{\L^{\sigma}_{\bfa}}\DR = \operatorname{logDR}(\L^\sigma_{\bfa}).\]
It then follows from invariance properties of the logDR-cycle that
\[\operatorname{logDR}(\L^{\sigma}_{\bfa})=\operatorname{logDR}(\O(\bfa)),\]
thus providing a formula for $\operatorname{logDR}(\bfa)$ using \cite[Prop 44]{holmes2025logDR}.


\subsubsection{}\label{sec:logblowupfromstability2}  Consider now $(\Mgasigma,\L_{\bfa}^\sigma)$. We have a map
$$\Mgasigma\to\mathbf{Twist}(\Mgasigma,\L_{\bfa}^\sigma),$$
given by the choice of the trivial sPL function. It also contains $\mathbf{Twist}^\mzero(\Mgasigma,\L_{\bfa}^\sigma)$. Indeed, assume that $\L^\sigma(\alpha)$ has multidegree $\mzero$. In particular, it is $\sigma$-stable. Since $\L^\sigma$ is already $\sigma$-stable, the divisor of $\alpha$ is the trivial one and $\alpha$ is thus constant.

\begin{rem}
\label{rem:tropDR}
 One can think of the subdivision $\Mgasigma$ as a choice for a proper model for (in the notation of \cite{molcho2024case}, it would be $\mathbf{Twist}(\Mgnbar,\L_{\bfa})$ in our notation above )
 
 \[\operatorname{tDR}_{g,\bfa} := \operatorname{DR}^{\rm{trop}}_{g,\bfa} \times_{\M_{g,n}^{\mathrm{trop}}}\Mgnbar, \]
 where $\M_{g,n}^{\mathrm{trop}}$ is the Artin stack of tropical curve constructed in \cite{cavalieri2020moduli}, which is also the tropicalization of $\Mgnbar;$ and $ \operatorname{tropDR}_{g,\bfa} $ is the tropicalization of $\DRL(\mathcal O(\bfa)).$

 By the construction as fiber product recalled above, the cones of the tropicalization of the latter are indexed by:
 \begin{itemize}
     \item the dual graph $\Gamma$ of a stable curve;
     \item a piece-wise linear function $\alpha$ vanishing on the vertex supporting the first marking, such that $\mathrm{div}(\alpha)-\bfa=\underline{0}$. 
 \end{itemize}
 Notice that the existence of such a function $\alpha$ imposes restriction of the edge lengths and thus the $\mathrm{tropDR}_{g,\bfa}$ is not a sub-complex of $\M_{g,n}^{\mathrm{trop}}$ but rather of some subdivision.

It follows from the description of the cones of the tropicalization of
$\Mgasigma$  given in \cite[Section~4]{holmes2025logDR} that the latter is such a subdivision. Indeed, the cones of the tropicalization of $\Mgasigma$ are indexed by:
\begin{itemize}
    \item a quasi stable curve $\Gamma^{\sigma}_{\bfa}\to\Gamma;$
    \item a $\sigma$-stable divisor $D_{\bfa};$
    \item a piece-wise linear function $\alpha^{\sigma}$ (vanishing on the vertex supporting the first marking) realizing the equivalence between $D_{\bfa}$ and $\underline{\deg}(\mathcal O(\bfa)).$
    The tropical $\operatorname{tropDR}_{g,\bfa}$ is the sub-complex corresponding to the cones where $\Gamma$ is not subdivided and the divisor  $D_{\bfa}$ is $\underline{0}.$
\end{itemize}
To be precise, on \cite{molcho2024case} the authors consider a refined cone structure on $ \mathrm{tropDR}_{g,\bfa}$ guaranteeing non-singularity of its Artin fan, thus the latter is rather the tropicalization of the moduli space $\widetilde{\mathrm{Rub}}(\mathcal O(\bfa))$ defined in \cite[Section~6]{bae2023pixton}.
In this case the piece-wise linear function $\alpha$ induces a total order on the vertices of $\Gamma;$ we can assume that the minimal value is zero and $\alpha$ is asked to be \emph{reduced}. As explained in \cite[Section~6]{bae2023pixton}, the classes induced b $\widetilde{\mathrm{Rub}}_g\subseteq \mathrm{Rub}_g\subseteq\Div_g$ coincide, we thus do not pay attention to the difference.
\end{rem}

\subsubsection{}\label{sec:logblowupgeneral} The construction of the logarithmic modification recalled in Section~\ref{sec:logblowupfromstability} can be generalized to any family $(C/S,\L)$ of log smooth curves over a log smooth algebraic stack endowed with at least a section $x\colon S\to C;$ this condition is necessary to ensure the existence of a small non degenerate stability condition $\sigma.$ It is in fact in this  generality that the authors in \cite[Section~4, Theorem~35]{holmes2025logDR} work.

Also Remark~\ref{rem:tropDR} can be generalised to any base $S$ substituting the tropicalization $\Sigma_S$ of $S$ to $\mathcal M_{g,n}^{\rm{trop}}$ and adjusting accordingly the combinatorial data indexing the cones. In this case $\rm{tDR}(\L)$ nothing but $\mathbf{Twist}^{\mzero}(S,\L).$
\subsection{Higher logarithmic DR}
\label{sec:higher-DR-log}

The following proposition is one of the reasons why it is preferable to work with the logDR-cycles rather than the usual DR-cycles.

\begin{prop}\cite{holmes2022intersection}
\label{prop:productformula}
    The DDR-cycle $\DDR(\L_1,\L_2)\in \operatorname{CH}(\Mgnbar)$ is the push-forward of the following cycle:
    $$\operatorname{logDDR}(\L_1,\L_2) := \operatorname{logDR}(\L_1)\cdot \operatorname{logDR}(\L_2) \in \operatorname{logCH}(\Mgnbar).$$
\end{prop}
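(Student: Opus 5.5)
Since the statement is attributed to \cite{holmes2022intersection}, the plan is to reconstruct their argument in the language of the twist stacks recalled above: realize both $\operatorname{logDR}(\L_i)$ on a common logarithmic modification, and compare their product with the Gysin pull-back defining $\DDR(\L_1,\L_2)$ from diagram \eqref{eq:diagramDDR}.

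First, using Section~\ref{sec:logblowupgeneral} (stability conditions, a section being available on $\Mgnbar$ via the first marking), I will choose a logarithmic modification $\widetilde S\to\Mgnbar$ that simultaneously supports representatives of $\operatorname{logDR}(\L_1)$ and $\operatorname{logDR}(\L_2)$. Concretely, I will take a common refinement of the subdivisions $\bar{\M}^{\sigma}_{g,\L_1}$ and $\bar{\M}^{\sigma}_{g,\L_2}$. Almost twistability is stable under further log modification, since pulling back the twisting sPL functions $\alpha_1,\alpha_2$ preserves the closed-immersion condition. Proposition~\ref{prop:twists} together with \cite[Prop 44]{holmes2025logDR} then gives $\operatorname{logDR}(\L_i)=\varphi^*_{\L_i(\alpha_i)}\DR=[\mathrm{LogDRL}(\L_i)]^{\mathrm{vir}}$ on $\widetilde S$. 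Each $\mathrm{LogDRL}(\L_i)\hookrightarrow\widetilde S$ is a closed embedding by Lemma~\ref{lem:fineenoughsubdivision}, being the pull-back of the unit section $e_i\hookrightarrow\mathfrak{Jac}$ along the section of $\mathfrak{Jac}$ determined by $\L_i(\alpha_i)$ over the open $\mathbf{Twist}^\mzero$.

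Second, I will identify the product. On $\widetilde S$, the product of the two refined Gysin pull-backs is the Gysin pull-back of $e\times e\hookrightarrow\mathfrak{Jac}\times_{\widetilde S}\mathfrak{Jac}$ along the product section. This uses excess-intersection-free functoriality of refined Gysin maps for the regular embeddings of unit sections, as in \cite{manolache2012virtual}. Its support is $\mathrm{LogDRL}(\L_1)\times_{\widetilde S}\mathrm{LogDRL}(\L_2)$. I then must show that this fiber product, which is strict over $\widetilde S$, coincides with $\DDRL(\L_1,\L_2)$, and that its obstruction theory is the one pulled back along $\mathrm{aj}\times\mathrm{aj}$. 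For the moduli comparison, I will use the fact that $\Div_g\times^{fs}_{\mathfrak M_{g,n}}\Div_g$ parametrizes pairs of sPL functions $(\alpha_1,\alpha_2)$ on a common tropical curve. Since $\widetilde S$ is fine enough that both $\alpha_i$ are sPL functions on its universal curve, the fs fiber product becomes strict over $\widetilde S$, giving $\DDRL(\L_1,\L_2)\cong\mathrm{LogDRL}(\L_1)\times_{\widetilde S}\mathrm{LogDRL}(\L_2)$ as a closed substack of $\widetilde S$. Both obstruction theories are relative to the smooth group $\mathfrak{Jac}\times\mathfrak{Jac}$ with the same normal bundle (the Hodge-type bundle $\mathrm{Lie}(\mathfrak{Jac})^{\oplus 2}$), so they agree.

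Finally, pushing forward along $\widetilde S\to\Mgnbar$, the class $[\DDRL]^{\mathrm{vir}}$ maps to $\DDR(\L_1,\L_2)$ by definition. Independence of the choice of $\widetilde S$ in $\operatorname{logCH}$ follows from compatibility of these classes under further log blow-ups, which is again a consequence of the twistability criterion.

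I expect the main obstacle to be the second step, specifically the strictness and identification of $\DDRL$ with the naive fiber product. This is exactly where multiplicativity fails on $\Mgnbar$ itself. My first attempt will be a tropical, cone-by-cone argument: check that on each cone of $\Sigma_{\widetilde S}$ the pair $(\alpha_1,\alpha_2)$ is linear, so that the fs fiber product introduces no further subdivision.
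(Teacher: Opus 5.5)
Your proposal is correct and is essentially the argument the paper relies on. After citing \cite{holmes2022intersection}, the paper takes $S_{\L_1,\L_2}$ to be a common refinement of $S^\sigma_{\L_1}$ and $S^\sigma_{\L_2}$, realizes $\DDRL$ there as the fiber product over the diagonal of the two loci $\DRL(\L_i^{\sigma})$, and represents $\operatorname{logDDR}(\L_1,\L_2)$ by $\Delta^!(\DR(\L_1^{\sigma})\times\DR(\L_2^{\sigma}))$, which is equivalent to your $(e\times e)^!$ formulation. The step you flag as the main obstacle (that the fs fiber product agrees with the ordinary one on the refinement) is imported in the paper from \cite[Theorem~4.6.3]{molcho2024case}; it also follows from what you already establish in Step~1, namely that each $\mathbf{Twist}^{\mzero}(\widetilde S,\L_i)\to\widetilde S$ is an open immersion, so a twist trivializing $\L_i$ is unique whenever it exists.
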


Let us  recall how to define a locus whose (virtual) class represents the logDDR-cycle, as for the rank 1 case \cite{holmes2021extending}. 

Given $(C/S,\L_1,\L_2)$ a log smooth curve over a smooth algebraic stack with logarithmic structure, and two line bundles of total degree zero, we have a rational map
\[(\varphi_{\L_1}, \varphi_{\L_2})\colon S\dashrightarrow\mathfrak{Jac}\times_{S}\mathfrak{Jac}.\]
Applying the construction \cite[Section~3.5,5.1]{holmes2022intersection}, there is a logarithmic blow-up $S_{\L_1,\L_2}$ such that the following map is proper:
\[e_2\times_{\mathfrak{Jac}\times_S\mathfrak{Jac}} \widetilde{\U}\to S_{\L_1,\L_2} ,\]
where $\widetilde{\U}$ is the biggest open in the logarithmic modification where the map extends, and $e_2\hookrightarrow\mathfrak{Jac}\times_S\mathfrak{Jac} $ the closed embedding obtained taking the fiber product of the unit section with itself. 

\medskip

Concretely, the space $S_{\L_1,\L_2}$ can be chosen to be a common refinement of the subdivisions $S^\sigma_{\L_1}\to S,$ and $S^\sigma_{\L_2}\to S$.
The Gysin pull-back yields a class 
on $e_2\times_{\mathfrak{Jac}\times_S\mathfrak{Jac}} \widetilde{\U}$ and thus a class in $\operatorname{CH}_*(S_{\L_1,\L_2}).$ This is a representative of the logDDR-class.

\medskip

This refinement comes with two quasi-stable curves $C^\sigma_{\L_1}$ and $C^\sigma_{\L_2}$ pulled back from the respective $S^\sigma_{\L_i};$  each one  of them comes equipped with its $\sigma$-stable line bundle $\L_1^\sigma=\L_1(\alpha_1^\sigma)$ (resp. $\L_2^\sigma=\L_2(\alpha_2^\sigma)$).

\medskip

Possibly after passing to a finer logarithmic subdivision, there is a curve  $C^\sigma_{1,2}$ which is a subdivision of both $C^\sigma_1$ and $C^\sigma_2$.   It is quasi-stable in the following sense: on each fiber there can be at most two unstable $\PP^1$ adjacent to each other. Furthermore on each unstable component, the pull-back of  $\L^\sigma_1$ or $\L^\sigma_2$ has degree $1$. 

\medskip

Below, we prefer to work with the two curves $C^\sigma_1$ and $C^\sigma_2$ rather than this common model. This is not a problem since the two models have the same Jacobian, which is also isomorphic to the Jacobian of the underlying stable curve.

\medskip

Let $\DRL(\L_i^{\sigma})=S_{\L_1,\L_2}\times_{\mathfrak{Pic}_{g,n,0}}\Div_g$. We have that the fs fiber product is also the usual fiber product (see \cite[Theorem~4.6.3]{molcho2024case}):

\bcd
\DDRL(\L_1^{\sigma},\L_2^{\sigma})\ar[d]\ar[r] & \DRL(\L_1^{\sigma})\times \DRL(\L_2^{\sigma})\ar[d]\\
S_{\L_1,\L_2}\ar[r,"\Delta"]  & S_{\L_1,\L_2}\times S_{\L_1,\L_2}.
\ecd
It is endowed with the natural virtual class induced by Gysin pull-back along the diagonal and represents the logarithmic double double ramification cycle:

\[\rm{logDDR}(\L_1,\L_2)=[\DDRL(\L_1^{\sigma},\L_2^{\sigma})]^{\rm{vir}}=\Delta^!(\DR(\L1^{\sigma})\times\DR(\L_2^{\sigma}))\]

where $\DR(\L_i^{\sigma})=[S_{\L_1,\L_2}]\cap \varphi_{\L_i^{\sigma}}^*\DR^{op}.$

\medskip

Using the Weil pairing exactly as in Section~\ref{sec:correlatedDDR}, we can define a \emph{correlated} refinement of the logarithmic double ramification cycle:

\[\mathbf{logDDR}(\L^r_1,\L^r_2)=\sum_{\theta\in\mu_r}\operatorname{logDDR}^{\theta}(\L^r_1,\L^r_2)\cdot(\theta)\in\mathrm{logCH}_*(S,\mathbb Q)\otimes\mathbb Q[\mu_r].\]

\section{Multiple Cover Formula for DDR cycle}
\label{sec-MCF-DDR}
This section is dedicated to the proof of the main result of the paper.
\begin{theo}\label{theo:MCF-FFR}
    The correlated logDDR-cycle belongs to the span of the $(T_k)_{k|r}$, and satisfies the $2g$-MCF:
    $$\mathbf{logDDR}(\L_1^r,\L_2^r) = \sum_{k|r}k^{2g}\Prim_{r/k}\mathbf{logDDR}(\L_1^{r/k},\L_2^{r/k})\cdot T_{r/k}.$$
\end{theo}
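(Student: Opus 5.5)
The plan is to lift the computation to the double spin space and reduce the multiple cover formula to the algebraic count of Proposition~\ref{prop:expression_G}, as anticipated in Section~\ref{sec:ideaproof}.

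\textbf{Step 1: the double root space.} Let $\widetilde S\to S$ be the root stack of Section~\ref{sec:logroots}, and let $\widetilde S\oner:=\widetilde S(\L_1^{r/r})\times_{\widetilde S}\widetilde S(\L_2^{r/r})$ be the space of pairs of logarithmic $r$-roots $(\R_1,\R_2)$ of $(\L_1^r,\L_2^r)$. By Theorem~\ref{thm:logrotsafterbasechange} it is finite flat of degree $r^{4g}$ over $\widetilde S$. On it we have the universal $r$-torsion logarithmic line bundles $\T_i=\R_i\otimes\L_i^{-1}$, and the logarithmic Weil pairing $W_r(\T_1,\T_2)$ is locally constant. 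This decomposes $\widetilde S\oner=\bigsqcup_\theta\widetilde S^\theta\oner$. Over the locus where both $\R_i$ are logarithmically trivial, $\T_i\equiv\L_i^{-1}$, and hence $W_r(\T_1,\T_2)=W_r(\L_1,\L_2)$. For each point of $\DDRL(\L_1^r,\L_2^r)$ there is exactly one pair of roots with $\R_1,\R_2$ trivial, so $\mathrm{logDDR}^\theta(\L_1^r,\L_2^r)$ is the push-forward of $\mathrm{logDR}(\R_1)\cdot\mathrm{logDR}(\R_2)$ restricted to $\widetilde S^\theta\oner$. By Proposition~\ref{prop:productformula} this product represents the logDDR class of $(\R_1,\R_2)$.

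\textbf{Step 2: a formula.} Using the representatives $\R_{\mathcal D}$ of Definition~\ref{rem:rootrep} together with a stability-condition modification (Sections~\ref{sec:logblowupfromstability}--\ref{sec:logblowupgeneral}), I would produce an almost twistable family via Proposition~\ref{prop:twists}. The universal DR formula then gives $\mathrm{logDR}(\R_i)$ as a Pixton-type expression. Its stratum-wise ingredients (graph, twist $\alpha$, $\psi$ and $\xi$-classes) depend only on the tropical data, namely the flows $w_1,w_2$ modulo $r$, and not on the finite-étale choice of the root within a stratum. Pushing forward to a common log modification of $S$, each stratum with dual graph $\Gamma$ and tropical roots $(w_1,w_2)$ then contributes the same tautological class times the number of points over it with a given correlator. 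That number is a refined count of the type $\bfG_{b,2g}$. By Example~\ref{ex:troPic}, the roots over a tropical root class form a coset $\lambda_i+L^*+W$ in $\LogPic^0[r]=(L\oplus L^*)\oplus W$, where $\lambda_i$ is determined by the tropical torsion class of $\T_i$.

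\textbf{Step 3: MCF.} By Proposition~\ref{prop:expression_G}, the refined count equals $r^{2g}T_{\omega_1}T_{\omega_2}\sum_{k|r}J_{2g-2b}(k)T_k$. It depends only on the orders $\omega_i$ of the tropical torsion classes and satisfies the $2g$-MCF once extended by zero. This already shows the class lies in the span of the $T_k$.

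The main obstacle is the compatibility of the geometric coefficient across levels. For $k\mid r$, the $T$-components of order dividing $r/k$ must be identified with the primitive contributions for $(\L_1^{r/k},\L_2^{r/k})$. Concretely, strata and Pixton terms for flows $w$ modulo $r$ whose tropical classes have orders dividing $r/k$ must match those for flows modulo $r/k$ (roughly, $w=k w'$). The Pixton weights must also agree, which I expect to follow from the polynomiality of Pixton's formula in the weighting modulo $r$, taking its constant term. I would organize this by viewing the data (graph, tropical torsion classes) as an $\NN$-module and using Proposition parts (3)–(4) of Section~\ref{sec-alg-recoll} to reduce to primitive orbits. The factor $r^{2g}$ against the degree $r^{4g}$ normalization must give exactly the $k^{2g}$; I would verify this first on the open stratum ($b=0$), where the statement reduces to $\bfF_{2g}$.
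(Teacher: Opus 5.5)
Your outline follows the paper's architecture through Step~3: double spin space over a stability modification and root stack, splitting by the logarithmic Weil pairing, almost twistability of the root representatives, and stratum-wise degrees given by $\bfG_{b_1(\Gamma),2g}$. However, the step you flag as the main obstacle is a genuine gap, and the remedy you suggest does not close it.

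The MCF compares the $T_{r/k}$-components of the level-$r$ expression with $\Prim_{r/k}\mathbf{logDDR}(\L_1^{r/k},\L_2^{r/k})$. For a termwise comparison you therefore need both expressed over the same strata with the same decorations. If you compute level $r/k$ on a separately built $(r/k)$-spin space, nothing lines up in general. The models, root stacks and root representatives built for $\L_j^{r/k}$ are not obtained from those for $\L_j^r$ by rescaling; for instance, the $\sigma$-stable twist of $\L_j^r$ is in general not $k$ times that of $\L_j^{r/k}$. So the correspondence $w=kw'$ is only meaningful when no twist is present. Polynomiality does not help either: the polynomial dependence in Pixton's formula is in its own auxiliary modulus, not in the order of the roots. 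Taking constant terms therefore says nothing about comparing spin-$r$ and spin-$(r/k)$ decorations.

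The missing idea is to never leave the level-$r$ space.

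\begin{enumerate}
\item Consider the locus $\tS_r^k\oner\subseteq\tS_r\oner$ where $\T_1,\T_2$ have order dividing $r/k$. It is cut out by equalities of sections of the finite \'etale group scheme $\tS_r(\O^{1/r})\times_{\tS_r}\tS_r(\O^{1/r})$, hence is open and closed.
\item It is isomorphic to the $(r/k)$-spin space of $(\L_1^{r/k},\L_2^{r/k})$ pulled back to $\tS_r$ (Proposition~\ref{prop:change-order}). The same representatives $\R_j$ are $(r/k)$-roots there, and almost twistability restricts to it.
\item Check that $\tS_r$ supports representatives of $\mathrm{logDR}(\L_j^{r/k})$. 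This follows from strictness of $\mathbf{Twist}^{\mzero}(\tS_r,\L_j^{r/k})\to\mathbf{Twist}^{\mzero}(\tS_r,\L_j^{r})\to\tS_r$ and Lemma~\ref{lem:fineenoughsubdivision}.
\item Show that the spin class restricted to $\tS_r^{k,\theta}\oner$ pushes forward to $\mathrm{logDDR}^\theta(\L_1^{r/k},\L_2^{r/k})$. The paper does this via a cartesian square over the \'etale map $\otimes\frac{r}{k}$ on the Jacobians (Proposition~\ref{prop:push-forward-compatibility}). The set-theoretic bijection in your Step~1 does not by itself compare virtual classes.
\end{enumerate}

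This yields, for every $k\mid r$,
$\mathbf{logDDR}(\L_1^{r/k},\L_2^{r/k})=\sum_{\Lambda,\phi_1,\phi_2}\bfG_{b_1(\Gamma),2g}(\frac{r}{k},\omega_1,\omega_2)\,[\tS_{r,\Lambda}]\cap\operatorname{Dec}_{\Lambda,\phi_1,\phi_2}$,
with a single index set and a single set of decorations. Here $\bfG$ is zero unless $\omega_1,\omega_2\mid r/k$.

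The theorem is then termwise the $2g$-MCF of $l\mapsto\bfG_{b_1(\Gamma),2g}(l,\omega_1,\omega_2)$, extended by zero (Proposition~\ref{prop:expression_G}). No $\NN$-module reduction or normalization against $r^{4g}$ is needed: the factor $k^{2g}$ is just the ratio of the prefactors $r^{2g}$ and $(r/k)^{2g}$ in $\bfG$.
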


Alternatively, we have this formulation free of the group algebra formalism.

\begin{coro}
    The correlated class $\operatorname{logDDR}^\theta(\L_1^r,\L_2^r)$ only depends on the order of $\theta$, and they satisfy the following multiple cover formula
    $$\operatorname{logDDR}^\theta(\L_1^r,\L_2^r) = \sum_{k|r,\theta}k^{2g}\operatorname{logDDR}^{\theta_\mathrm{prim}}(\L_1^{r/k},\L_2^{r/k}).$$
\end{coro}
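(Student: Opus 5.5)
The corollary is a restatement of Theorem~\ref{theo:MCF-FFR} with the group algebra formalism removed, so I would derive it by expanding both sides of the theorem in the basis $\{(\theta)\}_{\theta\in\mu_r}$ of $\QQ[\mu_r]$ and comparing coefficients.

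The first step is the dependence on the order of $\theta$. By the theorem, $\mathbf{logDDR}(\L_1^r,\L_2^r)$ lies in the span of $(T_k)_{k|r}$. Write it as $\sum_{k|r}C_k T_k$ with $C_k\in\mathrm{logCH}_*(S,\QQ)$. Since $T_k=\frac1k\sum_{k\theta\equiv0}(\theta)$, the coefficient of $(\theta)$ equals $\sum_{k:\ \mathrm{ord}(\theta)|k|r}C_k/k$. This depends only on $\mathrm{ord}(\theta)$, which is exactly the argument of the remark following the definition of diagonal type functions.

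The second step is to read off the formula. In the theorem, the $k$-summand is $k^{2g}\Prim_{r/k}\mathbf{logDDR}(\L_1^{r/k},\L_2^{r/k})\,T_{r/k}$. Its $(\theta)$-coefficient is nonzero only if $\theta$ is $(r/k)$-torsion, i.e. $k\,|\,r/\mathrm{ord}(\theta)$. This is the condition written $k|r,\theta$, where $\theta$ is identified with an element of $\RR/\ZZ$ of the form $m/r$ and "$k|\theta$" means $\theta\in\frac{k}{r}\ZZ/\ZZ$. When the condition holds, the coefficient is $\frac{k}{r}\Prim_{r/k}\mathbf{logDDR}(\L_1^{r/k},\L_2^{r/k})$.

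The third step is to identify this coefficient with $\operatorname{logDDR}^{\theta_\mathrm{prim}}(\L_1^{r/k},\L_2^{r/k})$ for a primitive $(r/k)$-th root $\theta_\mathrm{prim}$. Apply the first step at level $r/k$. A primitive element of order $r/k$ appears only in $T_{r/k}$ among the $T_j$ with $j|r/k$. So its coefficient is $\frac{1}{r/k}$ times the $T_{r/k}$-coefficient, which gives the required identification.

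The main care point is the normalization of "primitive coefficient" versus the coefficient of a primitive root. If $\Prim$ is defined as the raw $T_{r/k}$-coefficient, the factor $k/r$ must be tracked. It cancels against the $\frac{1}{r/k}$ above, so the scalar is $k^{2g}$ on both sides, as stated. The remaining step is to check that $\mu_r\hookrightarrow\RR/\ZZ$ sends the $(r/k)$-torsion correlators at level $r/k$ to the same group elements as at level $r$. This holds because the Weil pairing at level $r/k$ of $\L_i^{k}$ is compatible with the level-$r$ pairing under this embedding.
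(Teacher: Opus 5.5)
Your Steps 1--3 are the paper's argument: you extract $(\theta)$-coefficients from the group-algebra identity of the theorem, use that lying in the span of $(T_k)_{k|r}$ is equivalent to the coefficients depending only on the order of $\theta$, and relate $\Prim_{r/k}$ to the coefficient of a primitive $(r/k)$-th root. Your normalization $\Prim_{r/k}\mathbf{logDDR}(\L_1^{r/k},\L_2^{r/k})=\frac{r}{k}\operatorname{logDDR}^{\theta_{\mathrm{prim}}}(\L_1^{r/k},\L_2^{r/k})$ is the correct one; the factor $r^2$ displayed in the paper's proof should read $r$.

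Drop your final ``remaining step''. It is unnecessary: the theorem is already an identity in $\QQ[\RR/\ZZ]$, and the canonical embedding $\mu_{r/k}\hookrightarrow\RR/\ZZ$ factors through $\mu_r$, so extracting coefficients is purely formal and needs no input about Weil pairings. Its justification is also false, because on $(r/k)$-torsion log line bundles one has $W_r=k\cdot W_{r/k}$, not equality.
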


\begin{proof}
    One gets the class formulation from the group algebra formulation taking the $(\theta)$-coefficient in the group algebra. Saying that the $(\theta)$-coefficient only depends on the order is equivalent to saying that the group algebra valued logDDR belongs to the span of the $(T_k)_{k|r}$. We also have that
    $$\Prim_r\mathbf{logDDR}(\L_1^r,\L_2^r) = r^2\cdot\operatorname{logDDR}^{\theta_\mathrm{prim}}(\L_1^r,\L_2^r).$$
    Conversely, the group algebra formulation is obtained from the class formulation summing over all $\theta$ and grouping them by order.
\end{proof}

Pushing forward, we also get the same relation between the DDR-cycles.

\begin{coro}\label{theo:MCF-FFR}
    The correlated DDR-cycle satisfies the $2g$-MCF:
    $$\mathbf{DDR}(\L_1^r,\L_2^r) = \sum_{k|r}k^{2g}\Prim_{r/k}\mathbf{DDR}(\L_1^{r/k},\L_2^{r/k})T_{r/k}.$$
\end{coro}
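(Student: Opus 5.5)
The statement is the push-forward version of Theorem~\ref{theo:MCF-FFR} for $\mathbf{logDDR}$, so the plan is to deduce it formally from the logarithmic statement rather than redo any geometry. Recall that $\operatorname{logDDR}^\theta(\L_1^r,\L_2^r)$ is represented by a cycle on a logarithmic modification $p\colon S_{\L_1,\L_2}\to S$ (refined enough to support both $\operatorname{logDR}$ representatives and the root data). Its push-forward $p_*$ is the non-log class $\DDR^\theta(\L_1^r,\L_2^r)$. For the total class this is Proposition~\ref{prop:productformula}, i.e.\ \cite{holmes2022intersection}. For the refined classes I would argue as follows.

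First, show that for each $\theta$ the correlated component is compatible with the comparison. The locus $\DDRL^\theta(\L_1^r,\L_2^r)$ is open and closed in $\DDRL(\L_1^r,\L_2^r)$, cut out by the locally constant logarithmic Weil pairing \eqref{eq:correlator}. The logarithmic locus $\DDRL(\L_1^{r,\sigma},\L_2^{r,\sigma})$ on $S_{\L_1,\L_2}$ maps to $\DDRL(\L_1^r,\L_2^r)$, since a twist making $\L_i^r(\alpha_i)$ trivial is a logarithmic trivialization. The Weil pairing $W_r(\L_1,\L_2)$ is defined on both loci by the same formula, because $\LogPic$ is invariant under log modifications and subdivisions. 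Hence it pulls back, and the decomposition into $\theta$-components is compatible.

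Second, the comparison of virtual classes in \cite{holmes2022intersection} is proven by a push-forward of virtual classes along a proper map between DDR-loci compatible with obstruction theories. Since virtual push-forward is local on open-and-closed components, it restricts to each $\theta$-summand and gives $p_*\operatorname{logDDR}^\theta=\DDR^\theta$. This is the only step with content. I would handle it by noting that the virtual class of a disjoint union is the sum of the classes of the pieces. I would also check that the map $\DDRL(\L_1^{r,\sigma},\L_2^{r,\sigma})\to\DDRL(\L_1^r,\L_2^r)$ is birational/degree one on each piece, which holds because it is an isomorphism over $S^\circ$.

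Finally, apply the $\QQ[\mu_r]$-linear map $p_*\otimes\mathrm{id}$ to the identity of Theorem~\ref{theo:MCF-FFR}. Push-forward commutes with taking $T_k$-coefficients, so $p_*\Prim_{r/k}\mathbf{logDDR}=\Prim_{r/k}\mathbf{DDR}$. This also uses that the same modification $S_{\L_1,\L_2}$, or a common refinement of it, serves for all the $\L_i^{r/k}$, with push-forwards being functorial. The formula for $\mathbf{DDR}$ follows immediately.
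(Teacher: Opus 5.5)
Your proposal is correct and matches the paper's argument. The paper deduces the corollary in one line, by pushing forward the logarithmic MCF for $\mathbf{logDDR}$ along $\mathrm{logCH}(S)\to\mathrm{CH}_*(S)$ and invoking \cite{holmes2022intersection}. Your check that this push-forward respects the $\theta$-decomposition (the Weil pairing pulls back, and virtual classes are additive over open-and-closed pieces) makes explicit what the paper leaves implicit. The side remark about the map being degree one because it is an isomorphism over $S^\circ$ is unnecessary, and it would not by itself control components supported in the boundary. It does no harm, though, since the open-and-closed splitting argument already does the work.
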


\subsection{Moduli of roots}

Let $C/ S$ be a smooth log curve over smooth algebraic stack with log structure with at least one section; let $\L_1$ and $\L_2$ be two degree $0$ line bundles, $\sigma$ a small non-degenerate stability condition, and $r\geqslant 1$. 

\medskip

As explained in Section~\ref{sec:higher-DR-log}, we can find a logarithmic blow-up \[S_{\L_1^r,\L_2^r}\to S,\]
depending on the line bundles $\L_1^r$ and $\L_2^r$ and on the choice of stability $\sigma,$ such that on $S_{\L_1^r,\L_2^r}$, we have:  two quasi-stable curves $C_{r,1}^\sigma$ and $C_{r,2}^\sigma$; the curve $C_{r,j}^\sigma$ comes equipped with a $\sigma$-stable line bundle $(\L_j^r)^\sigma=\L_j^r(\alpha_j^\sigma)$. The class $\mathrm{logDR}(\L_j^r)\in\mathrm{logCH}_*(S)$ is represented by $\mathrm{DR}(\L_j^r(\alpha_j^\sigma))\in \mathrm{CH}_*(S_{\L_1^r,\L_2^r})$ and the latter can be computed applying the universal DR-formula to the line bundle
 $(\L_j^r)^\sigma.$

 \medskip

To lighten the notation, from now on, we denote
\[S_r:= S_{\L_1^r,\L_2^r}.\]


Following \cite{holmes2023root}, as recalled in \ref{sec:logroots}, we can construct the moduli space $S_r\oner$ parametrizing log-$r$-roots of $(\L_1^r)^\sigma$ and $(\L_2^r)^\sigma$ as the following fiber product over fs log schemes:
\bcd
S_r\oner \ar[r]\ar[d] & \LogPic(C^\sigma_{r,1}/S_r)\times_{S_r}\LogPic(C^\sigma_{r,2}/S_r) \ar["r\cdot{,}r\cdot",d] \\
S_r \ar[r] & \LogPic(C^\sigma_{r,1}/S_r)\times_{S_r}\LogPic(C^\sigma_{r,2}/S_r),
\ecd
where the bottom map is the one given by  $(\varphi_{(\L_1^r)^\sigma},\varphi_{(\L_2^r)^\sigma})$.

\medskip

The \emph{spin moduli space} $S_r\oner$ comes endowed with the two universal log-roots $\R_1$ and $\R_2$ satisfying the following equality of logarithmic line bundles:
\begin{equation}\label{eq:logrootsrelation}
\R_1^{\otimes r}\equiv(\L_1^r)^\sigma\equiv\L_1^r,\quad\text{and}\quad \R_2^{\otimes r}\equiv(\L_2^r)^\sigma\equiv\L_2^r.
\end{equation}

Following \cite[Section~4]{holmes2023root}, as recalled in Theorem~\ref{thm:logrotsafterbasechange} after taking a base change to a certain root stack along the boundary
$\tS_r\to S_r$ we have that:

\begin{itemize}[leftmargin=0.4cm]
    \item The stack $\tS_r\oner$ is a (\'etale finite) torsor under the \'etale 
    group scheme $\tS_r(\O^{1/r})\times_{\tS_r} \tS_r(\O^{1/r})$,
    with a natural section $(\L_1,\L_2)$ trivializing the torsor.
    \item After resolving the singularities of the total space  of the pull-back along the root stack of the quasi-stable curves\footnote{Recall this  is obtained inserting a length $r-1$ chain of exceptional components $\mathbb P^1$ at each node, i.e., tropically, subdividing each edge $\delta$ times.} $C_{r,j}^\sigma$,
    there exists an honest line bundle on the subdivided curve $\widetilde{C}^\sigma_{r,j}$ representing the universal logarithmic root, also denoted by $\R_j$.
\end{itemize}


The line bundle $\R_j$ depends on the choice of a  tropical $r$-torsion divisor on each cone of the tropicalization of $\widetilde{C}^\sigma_{r,j}\to\tS_r\oner$, compatible under generalization maps, i.e. edge contractions.

We choose the representatives induced by the tropical torsion divisors constructed in Definition~\ref{def:torsiontropical}. The multidegree of $\R_j$ then satisfies
\[r\cdot\underline{\deg}\ \R_j=\underline{\deg}\ (\L_j^r)^\sigma+\mathrm{div}(\tau_j),\]
where $\tau_j$ is the sPL function on the tropicalization $\widetilde{\Gamma}_{r,j}$ of  $\widetilde{C}^\sigma_{r,j}$ with value $0$ at the vertices of $\Gamma_j$, the tropicalization of $C^\sigma_{r,j}$,  constructed in Lemma~\ref{lem:plfunction}.

\begin{rem}
Recall that the slope of $\tau_j$ along the two half edges $e_h, e_{h'}$ obtained subdividing of the edge $e=\{h,h'\}$
are opposite modulo $r$ and that when the slope is non-zero, $\tau_j$  has a unique bending point $u$ inside the edge with $\div(\tau_j)(u)=r$.
\end{rem}

Equation~\eqref{eq:logrootsrelation} for the log roots becomes the following identity of line bundles on $\widetilde{C}_{r,j}$:
\[\R_j^r = (\L_j^r)^\sigma(\tau_j) = \L_j^r(\alpha_j+\tau_j).\]

\begin{rem}
Notice that our representative $\R_j$ is actually pulled back from a coarser subdivision of $C_{r,j}^\sigma$, where along each edge there is at most one exceptional curve, corresponding to the vertex $[u]$. From now on, we denote by $\widetilde{C}_{r,j}^\sigma$ this minimal subdivision over which $\R_j$ is defined. 
\end{rem}


    \subsubsection{Almost Twistability}


We denote by $\DR(\R_j)$ the class in $\mathrm{CH}_*(\tS_r\oner,\QQ) $ obtained applying the universal DR cycle formula \cite{bae2023pixton} for the morphism

\[ \widetilde{S}_r\oner\xrightarrow{\varphi_{\R_j}} \mathfrak{Pic}_{g,n,0},\]
and $\operatorname{logDR}(\R_j)\in  \mathrm{logCH}_*(\widetilde{S}_r\oner,\QQ) $  the logDR-class defined as recalled in Section \ref{sec:logDR}.
Notice that by our choice of representative $\R_j,$ $\DR(\R_j)$ is the class of the \emph{spin DR-cycle} as introduced in \cite{holmes2023root}.

\begin{prop}\label{prop:almosttwistable}
    The family $(\widetilde{C}_{r,j}^\sigma\to \widetilde{S}_r\oner,\R_j,0)$ is almost twistable. In particular, $\DR(\R_j)$ provides a representative for $\operatorname{logDR}(\R_j)$.
\end{prop}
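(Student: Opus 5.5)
We will verify almost twistability through the criterion of Proposition~\ref{prop:twists}, in the same way as in Section~\ref{sec:logblowupfromstability2}. The family $(\widetilde{C}_{r,j}^\sigma\to\tS_r\oner,\R_j)$ comes with the tautological map
\[\tS_r\oner\longrightarrow \mathbf{Twist}(\tS_r\oner,\R_j),\]
given by the trivial destabilization and the trivial sPL function $0$. It therefore suffices to show three things. First, this map is an open immersion. Second, its image contains $\mathbf{Twist}^\mzero(\tS_r\oner,\R_j)$. Third, $\tS_r\oner$ is separated.

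Separatedness is the easy part. By Theorem~\ref{thm:logrotsafterbasechange}, $\tS_r\oner\to\tS_r$ is finite. The map $\tS_r\to S_r$ is a root stack, hence proper, and $S_r\to S$ is a log modification, hence proper. So $\tS_r\oner$ is proper over $S$, and in particular separated over it. The open immersion follows from the fact that a section of a log étale stack over the base, with trivial twist, is strict and étale. More precisely, $\mathbf{Twist}$ is log étale over the base, and the trivial twist lands in the strict locus where no nontrivial twist is possible on the cone. I will make this precise by comparing cones tropically, as in Remark~\ref{rem:tropDR}.

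The main point is the containment of $\mathbf{Twist}^\mzero$. Suppose that, after base change to some $T$, there is an sPL function $\beta$ on a destabilization-free model with $\underline{\deg}\,\R_j(\beta)=\mzero$. We must show that $\beta$ is constant. The input is the explicit multidegree of $\R_j$:
\[r\,\underline{\deg}\,\R_j=\underline{\deg}(\L_j^r)^\sigma+\div(\tau_j),\]
where $(\L_j^r)^\sigma$ is $\sigma$-stable on $C^\sigma_{r,j}$, and $\tau_j$ vanishes on the vertices of $\Gamma_j$ and has a single bend at the exceptional vertex $u$ of each subdivided edge. Multiplying by $r$, the condition $\underline{\deg}\,\R_j(\beta)=\mzero$ becomes
\[\underline{\deg}(\L_j^r)^\sigma+\div(\tau_j+r\beta)=\mzero.\]

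The plan is then in two steps. First I restrict to the original vertices. On each exceptional vertex $u$, $\R_j$ has degree $1$ (or $0$ where $w=0$). On a chain through $u$, the function $\tau_j+r\beta$ is forced to have constant slope on each side. I then read off a PL function on $\Gamma_j$ itself, namely the restriction of $\tau_j+r\beta$ to the vertices of $\Gamma_j$, which equals $r\beta$ there. This function twists the $\sigma$-stable bundle $(\L_j^r)^\sigma$ into multidegree $\mzero$. By uniqueness of $\sigma$-stable twists, as in Section~\ref{sec:logblowupfromstability2}, $r\beta$ is constant on the vertices of $\Gamma_j$.

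In the second step I propagate this along each subdivided edge. With $\beta$ equal at both endpoints, the balancing at $u$ forces the slopes of $r\beta$ on the two halves to be opposite and to compensate the degree $1$ at $u$, which is incompatible with $\div(\beta)(u)\in\ZZ$ unless $\beta$ is constant. This compatibility with the weighting $w$ at $u$ is the delicate step, and I expect it to be the main obstacle. One has to check carefully that the chain subdivision has no extra twists, using the normalization $w(h)+w(h')\equiv0$ and $0\le w<r$, and I will handle it by direct slope bookkeeping.

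Once the containment is established, Proposition~\ref{prop:twists} gives almost twistability, and \cite[Prop 44]{holmes2025logDR} yields $\DR(\R_j)=\operatorname{logDR}(\R_j)$.
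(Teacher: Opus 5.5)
Your proposal is correct and follows essentially the paper's route. You apply Proposition~\ref{prop:twists}, rewrite $\underline{\deg}\,\R_j(\beta)=\mzero$ as a twist $\tau_j+r\beta$ of the $\sigma$-stable bundle $(\L_j^r)^\sigma$ that cannot bend at exceptional vertices and so descends to $\Gamma_j$, and conclude by uniqueness of $\sigma$-stable representatives. The only difference is that the paper rules out a nontrivial weighting through the bijection of Proposition~\ref{prop:constructroots} (trivial tropical root means $w=0$), whereas you do it by slope bookkeeping.

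In your Step 2, the contradiction at an exceptional vertex $u$ is not with $\div(\beta)(u)\in\ZZ$, since that equals $\pm1$. It is with strictness of $\beta$: its slopes on the two halves would be $\pm w(h')/r$ and $\pm w(h)/r$ with $0<w(h),w(h')<r$, which are not integers. So no exceptional vertex can occur, and $\beta$ is then constant.
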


\begin{proof}
    We use the almost twistability criterion: the family  $(\widetilde{C}_{r,j}\to \tS_r\oner,\R_j,0)$ is the data of a morphism
    \[\tS_r\oner\to\mathbf{Twist}(\tS_r\oner,\R_j).\]

    Furthermore,  assuming that $S$ is a separated  algebraic stack with log structure (e.g. for the case of $\Mgnbar$), so is $\tS_r\oner$ by construction. 

To show that the family is almost twistable, we only need to argue  that  $\mathbf{Twist}^{\mzero}(\tS_r\oner,\R_j)$ is contained in $\widetilde{S}_r$ as an open.  
A $T$-point of $\mathbf{Twist}^\mzero(\widetilde{S}_r\oner,\R_j)$ is the data of a strict piece-wise linear function $\alpha_T$ on  $(\widetilde{C}_{r,j}^\sigma)_T$ (the pull-back to $T$ of the minimal subdivision over which the line bundle $\R_j$ is defined) satisfying $\underline{\deg}\ \R_j(\alpha_T)=\mzero$ on each fiber of $(\widetilde{C}_{r,j}^\sigma)_T\to T.$

    If this is the case, the associated tropical line bundle is trivial:
    \[\mathrm{trop}( \R_j(\alpha_T))= \mathrm{trop}( \R_j)= 0_{\widetilde{S}_r}\in\mathrm{TroPic}^0_{\widetilde{C}_{r,j}^\sigma/\widetilde{S}_r}(T).\]

    
    Denoting by $[-]$ the equivalence class of a tropical divisor, we have that  the  $r$th-power $\R_j^r$ satisfies
    \[0_{\widetilde{S}_r} = \mathrm{trop}(\R_j^r)=\mathrm{trop}((\L_j^r)^\sigma)= [\underline{\deg}(\L_j^r)^\sigma]\in\mathrm{TroPic}^0_{\widetilde{C}_{r,j}^\sigma/\widetilde{S}_r}(T).\]

    In particular, this means that there is a sPL function $\gamma_T$ on $(\widetilde{C}_{r,j}^\sigma)_T$ such that
     $(\L_j^r)^{\sigma}(\gamma_T)$ has multi-degree $\mzero$. 
     
     Since 
    $(\L_j^r)^\sigma$ is pulled back from $(C_{r,j}^\sigma)_T,$  the sPL $\gamma_T$ does not bend along the edges and thus it is also pulled-back from a sPL function on $(C_{r,j}^\sigma)_T$.

Furthermore, as $(\L_j^r)^\sigma$  is $\sigma$-stable for a small non degenerate stability condition, the uniqueness of the stable limit guarantees, precisely as in \cite[Proposition~48]{holmes2025logDR} that $\gamma_T=0$ and $(\L_j^r)^\sigma$ has multidegree $\mzero$.

Finally, by Proposition~\ref{prop:constructroots},  the representative $\R_j$  associated to the trivial tropical line bundle $0_{\widetilde{S}_r}$ is the unique one induced by the tropical divisor $\mathcal D_0$ for the trivial weighting: 
this is supported on $(C_{r,j}^\sigma)_T$ (not on a subdivision) and  is already multi-degree $\mzero$.

Remembering that the only sPL function on a curve with vertical log structure which are balanced and thus preserve the multi-degree are the constants we conclude $\alpha_T$ is indeed zero.
    

    
\end{proof}

    \subsubsection{Splitting $\widetilde{S}_r\oner$  according to order}

Twisting the universal log-root $\R_j$ of $\L_j^r$ by the preferred root $\L_j$, we get two $r$-torsion log-line bundles:
\[\T_1=\R_1\otimes\L_1^{-1},\quad \text{and}\quad \T_2=\R_2\otimes\L_2^{-1}.\]

These defines sections of the finite \'etale group scheme (see Theorem~\ref{thm:logrotsafterbasechange}) $\widetilde{S}_r(\O^{1/r})\times_{\widetilde{S}_r} \widetilde{S}_r(\O^{1/r})$ over 
 $\widetilde{S}_r\oner$. 

\medskip

Given $k|r$ we consider the locus in $\widetilde{S}_r\oner$, 
where $\mathcal T_1,\mathcal T_2$ have order at most $\frac{r}{k}$, namely the locus where as sections of  $\widetilde{S}_r(\O^{1/r})\times_{\widetilde{S}_r} \widetilde{S}(\O^{1/r})$,
\[(\T_1^{r/k})\equiv (\T_2^{r/k})\equiv\O.\]
Since we are looking at sections of an \'etale group over  $\widetilde{S}_r\oner$, the locus where a finite number of them agree is open and closed.

We denote by 
\[\widetilde{S}^k_r\oner\subseteq \widetilde{S}_r\oner\]
the open and closed sub-stack where $\T_1$ and $\T_2$ have order at most $\frac{r}{k}$ in the sense above.




\begin{prop}
\label{prop:change-order}
    The open and closed sub-stack $\widetilde{S}_r^k\oner$  is isomorphic to the (base change along the root stack $\widetilde{S}_r\to S_r$) of the $(r/k)$ spin moduli space $S_{r}(\frac{1}{r/k}, \frac{1}{r/k})$ parametrizing log-$(r/k)$-roots of  the line bundles $(\L_1^{r/k},\L_2^{r/k})$,  over $C_{r,1}^\sigma\to S_r$ and $C_{r,2}^\sigma\to S_r$ respectively. 
    
\end{prop}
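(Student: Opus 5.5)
We construct mutually inverse maps by comparing the moduli problems over fs log schemes, and reduce the comparison to a map of torsors under finite group schemes. Write $s=r/k$. A $T$-point of $S_r(\frac1s,\frac1s)$ over $S_r$ is a pair $(\R'_1,\R'_2)$ of log line bundles with $\R_j'^{\,s}\equiv\L_j^{s}$ on $C^\sigma_{r,j}$. Set
\[
\R_j:=\R'_j\otimes\L_j^{\,r-s}.
\]
Then $\R_j^r\equiv\L_j^{s\cdot k}\otimes\L_j^{r(r-s)}\cdot\L_j^{-r(r-s)}\cdots$; more cleanly, $\R_j\otimes\L_j^{-1}=\R'_j\otimes\L_j^{-1}$. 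Hence
\[
\T_j:=\R_j\otimes\L_j^{-1}=\R'_j\otimes\L_j^{-1}
\]
satisfies $\T_j^{s}\equiv\O$, so $\T_j^r\equiv\O$ and $\R_j^r\equiv\L_j^r\equiv(\L_j^r)^\sigma$. This defines a morphism $\Phi\colon S_r(\frac1s,\frac1s)\to S_r\oner$ landing in the locus where $\T_j^{r/k}\equiv\O$. Conversely, on $\widetilde S_r^k\oner$ the assignment $(\R_1,\R_2)\mapsto(\R_1,\R_2)$ gives log $s$-roots of $(\L_1^s,\L_2^s)$, since $\R_j^{s}\equiv\T_j^s\L_j^s\equiv\L_j^s$. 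The two constructions are visibly inverse on the level of the functors of points: both identify a point with the pair $(\T_1,\T_2)$ of log $s$-torsion line bundles, via $\R_j=\T_j\otimes\L_j$. After base change along $\widetilde S_r\to S_r$, we therefore get an isomorphism of stacks over $\widetilde S_r$.

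To make this rigorous, I would phrase everything through the fiber-product definitions. Since $\LogPic$ is a sheaf of groups, translation by $\L_j$ identifies $S_r(\L^{1/r})$ with the $r$-torsion $\LogPic[r]$ pulled back to $S_r$, i.e. with $S_r(\O^{1/r})$. Likewise the $s$-spin space is identified with $S_r(\O^{1/s})$. Under these identifications the claim reduces to the statement that the open and closed subgroup $\{\T:\T^{s}\equiv\O\}$ of $\LogPic[r]\times\LogPic[r]$ is $\LogPic[s]\times\LogPic[s]$, which holds tautologically as subfunctors. Because all constructions are fs fiber products over the same base $S_r$ (respectively $\widetilde S_r$), the base change along $\widetilde S_r\to S_r$ commutes with these identifications. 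By Theorem~\ref{thm:logrotsafterbasechange}, both sides are then finite étale over $\widetilde S_r$, so a monomorphism that is bijective on points is an isomorphism.

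One subtlety remains. The root stack $\widetilde S_r$ is the one for $r$, not for $s$. The $s$-spin space is only strict and étale over the $s$-root stack, but since $s\mid r$ the $r$-root stack refines it, and strictness/étaleness pull back. I expect this to be the main obstacle: checking that the fs fiber products and saturations agree. In particular, the log structure on $\widetilde S^k_r\oner$, which is inherited as an open and closed piece of an étale cover, must match that of the base-changed $s$-spin space. I would handle this using Remark~\ref{rem:rootstackmonoids}: at a geometric point both base monoids are $\frac1r\overline M_{S_r,s}$, and both maps are strict. The isomorphism of underlying stacks therefore upgrades to one of log stacks.
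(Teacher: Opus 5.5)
This is essentially the paper's proof. In both directions the map is the identity on $(\R_1,\R_2)$, using that $\R_j^{r/k}\equiv\L_j^{r/k}$ implies $\R_j^{r}\equiv\L_j^{r}$. Your torsor reformulation and your remarks on base change along the $r$-root stack only make explicit what the paper leaves implicit. The finite \'etale monomorphism argument is superfluous once you have mutually inverse maps of functors.

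Do fix the displayed definition $\R_j:=\R'_j\otimes\L_j^{\,r-s}$, which is wrong. It gives $\R_j^{r}\equiv\L_j^{r(r-s+1)}$, and $\R_j\otimes\L_j^{-1}=\R'_j\otimes\L_j^{\,r-s-1}$, not $\R'_j\otimes\L_j^{-1}$ in general. Replace it by $\R_j:=\R'_j$, which works because $\R_j'^{\,r}=(\R_j'^{\,s})^{k}\equiv\L_j^{r}$, and which is what your subsequent lines actually use.
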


\begin{proof}
We have the following:
    \begin{itemize}
        \item On $ \tS_r^k\oner$ we have that the restriction of the universal roots satisfy  $\R_j^{r/k}\equiv\L_j^{r/k}$ and thus determine a morphism $\tS_r^k\oner\to \tS_r\times_{\tS_r}S_{r}(\frac{1}{r/k}, \frac{1}{r/k})$

        \item On the other hand, on the universal curve  (does not matter if the singular one of its resolution for the log Picard group) over $\tS_r\times_{\tS_r}S_{r}(\frac{1}{r/k}, \frac{1}{r/k})$ the universal $r/k$ root $\R'_j$  satisfies $(\R'_j)^{r/k}\equiv\L_j^{r/k}$ which implies $(\R'_j)^{r}\equiv\L_j^{r}$ and thus we have a morphism $\tS_r\times_{\tS_r}S_{r}(\frac{1}{r/k}, \frac{1}{r/k})\to \tS_r^k\oner.$
    \end{itemize}
    These are clearly mutually inverse.
\end{proof}
\begin{defi}
    We will denote by \[\mathrm{logDDR}^k(\R_1,\R_2)=\DDR^k(\R_1,\R_2)\] the restriction of the logarithmic Spin DDR  class to the open and closed sub-stack $\widetilde{S}_r^k\oner.$
 \end{defi}

Since almost twistability is preserved by restricting to a open and closed sub-stack, by Proposition~\ref{prop:almosttwistable} and \ref{prop:change-order},  the class $\mathrm{logDDR}^k(\R_1,\R_2)$ is a representative for $\mathrm{logDDR}(\R'_1,\R'_2)$ where $\R'_1,\R'_2$ are the universal log-$(r/k)$-roots of  the line bundles $(\L_1^{r/k},\L_2^{r/k})$, hence the notation.

\medskip

We will show below that via the forgetful map $\tS_r\oner\to \tS_r$ the cycle
$\mathrm{logDR}^k(\R_1,\R_2)$ pushes forward to  $\mathrm{logDR}^k(\L^{r/k}_1,\L^{r/k}_2).$

\begin{rem}
    Notice that, following \cite{holmes2023root,chiodo2008towards}, it would be more natural to take the $(r/k)$-root stack to get a model over which the $(r/k)$ log-roots are representable and the ramification of $\LogPic^0[r/k]\to S$ have been resolved. For our purposes, it is however more convenient to work over the $r$-root stack.
\end{rem}


\subsubsection{Splitting by Weil pairing}

On each $\tS^k_r\oner$ we can consider the logarithmic Weil pairing $W_{r/k}$ naturally defined on the group of $(r/k)$-torsion log-line bundles. 

We then see that the moduli space is a union of open and closed components indexed by the value of $W_{r/k}(\T_1,\T_2)$. We write:
\[\tS^k_r\oner=\bigsqcup \tS_r^{k,\theta}\oner\]
where the union is over $\theta\in\mu_{r/k}$ and $\tS_r^{k,\theta}\oner$ is the component over which the $W_{r/k}$ is constant. In particular, the moduli stack $\tS_r\oner$ splits in open and closed sub-stacks indexed by the values of $W_r(\T_1,\T_2)\in\mu_r$.

\medskip

If $k>1$ divides $r$, notice that, it follows from the definition of the Weil pairing, that $W_{r/k}$ is not the restriction of  $W_r$ since 
via the canonical embedding $\mu_r\hookrightarrow\RR/\ZZ$,
we have the following relation in $\RR/\ZZ$:
    $$W_r(\T_1,\T_2) = k\cdot W_{r/k}(\T_1,\T_2).$$
We notice that $W_r$ fully determines $W_{r/k}$ since the subgroup of $(r/k)$-torsion elements of $\mathbb Q/\mathbb Z$ is isomorphic to a quotient of $r$-torsion via the multiplication map by $k$. More concretely, 
we can write a $(r/k)$-torsion element $\T_j$ as $\T_j=\U_j^k$ for some $r$-torsion log-line bundle $\U_j;$  then we have
$$W_{r/k}(\T_1,\T_2) = k\cdot W_r(\U_1,\U_2) \in (\RR/\ZZ)[r/k]\simeq\mu_{r/k}.$$
The right hand-side is indeed well-defined: $\U_1$ and $\U_2$ are well defined up to $k$-torsion, and thus $kW_r(-,-)$ vanishes whenever one of the two log-line bundles is of $k$-torsion.

\subsubsection{Stratification and degree computation}
    \label{sec-stratification-degree-computation}

The proof of the multiple cover formula will essentially follow from computing the degree of the projection from $\widetilde{S}_r(\frac{1}{r},\frac{1}{r})\to S_r$ on each strata of the logarithmic  stratification of the stack $S_r$, i.e. the strata where the fiber of the sheaf of monoids is constant.
The latter correspond to cones in the Artin fan (see for example \cite{carocci2026tropical} or \cite{pandharipande2025logarithmic}) and can thus be described tropically. 
\begin{rem}
    Since all the stacks $\mathcal S$ with log structure we are working with are logarithmically smooth over the trivial log point, and thus smooth over their Artin fan $\mathcal S\to\Sigma_{\mathcal S},$ \cite[Theorem~4.6]{olsson2003logarithmic} the closure of each logarithmic strata $\mathcal S_{\tau}$ (for $\tau\to\Sigma_{\mathcal S}$ a cone) is equidimensional of dimension $\dim \mathcal S-\text{rk}(\bar{M}^{\gp}_{\mathcal S,\eta})$ for $\eta$ a generic point of $\mathcal S_{\tau}.$ We  will denote by $[\mathcal S_{\tau}]\in \mathrm{CH}_*(\mathcal S)$ the corresponding class.
\end{rem}

\medskip

\paragraph{\bf{Cones of the tropicalization and strata of $S_r$}}
Following the construction of the log modification $S_{\L}^\sigma$ given in \cite[Section~4]{holmes2025logDR} and in particular the proof of [\textit{ibid}, Theorem~35], we see that
 the cones  of  $S_r$ are indexed by the following data:
\begin{itemize}
    \item two quasi stable subdivisions $\Gamma^{\sigma}_{r,j}\to\Gamma,\; j=1,2$
    \item two $\sigma$-stable divisors $D_{\L^r_j}\; j=1,2$ supported on the vertices of $\Gamma^{\sigma}_{r,j};$
    \item the slopes of the two strict piece-wise linear functions $\alpha_j^\sigma,$ realizing the linear equivalence  between $D_{\L^r_j}$ and $\underline{\deg}(\L_j^r).$
\end{itemize}
To simplify the notation, we will denote by $\Lambda$ the collection of all these discrete data, and  by $\tau_{\Lambda}$ the corresponding cones. The associated closed stratum is denoted by $S_{r,\Lambda}$. The cone $\tau_\Lambda$ is the sub-cone of the cone $\mathbb R_{\geq 0}^{E(\Gamma^{\sigma}_{r,1})}\times_{\mathbb R_{\geq 0}^{E(\Gamma) }}\mathbb  R_{\geq 0}^{E(\Gamma^{\sigma}_{r,2})}\times_{\mathbb R_{\geq 0}^{E(\Gamma) }}\sigma_{\Gamma}$
cut out by the condition ensuring that $\alpha_j^\sigma,$ are strict piece-wise linear function on the quasi stable model. The maps $\mathbb R_{\geq 0}^{E(\Gamma^{\sigma}_{r,j})}\to\mathbb R_{\geq 0}^{E(\Gamma) }$ are the natural ones defined by $\ell(e)=\widetilde{\ell}(e_1)+ \widetilde{\ell}(e_2)$ when $e$ is subdivided.


\medskip

\paragraph{\bf{Cones of the tropicalization and strata of the root stack $\tS_r$}}
Starting from the description of the cones for the Artin fan $\Sigma_r$ of $S_r$ we can also obtain a description of the logarithmic strata for the root stack $\tS_r$, as done in \cite{holmes2023root}:
if  $\tSigma_r$ denotes the tropicalization,  its cones are indexed by the same combinatorial data $\Lambda$, but they come with a different lattice structure since having taken a root stack along the boundary, each integral generator admits an $r$-root. The corresponding stratum is denoted by $\tS_{r,\Lambda}$.

\medskip

\paragraph{\bf{Cones of the tropicalization and strata of $\tS_r\oner$}}

Finally, we describe the cones 
$\tSigma_r\oner$ the tropicalization of $\tS_r\oner$. Following \cite{holmes2023root}, its cones (corresponding to the logarithmic strata of $\tS_r\oner$) are indexed by:


\begin{itemize}
    \item The combinatorial data $\Lambda$ from before.
    \item Two weightings modulo $r,$ $\phi_1$ and $\phi_2$, with $\phi_j$ compatible with $\underline{\deg}(\L_j^r)^\sigma \mod r$ (see Section~\ref{sec:logroots}). These correspond to the choice of line bundle representing the logarithmic root as recalled above.
   Given the weighting $\phi_j$, 
    we speak of the associated torsion tropical divisor, which  is the multidegree of $\T_j$ and is given by
$$T_j=\frac{\underline{\deg}\ (\L_j^r)^\sigma +\div\phi_j}{r}-\underline{\deg}\ \L_j.$$

\end{itemize}


Notice that over a fixed cone of $\tSigma_r$ we have exactly $(r^{b_1(\Gamma)})^2=r^{2b_1(\Gamma)}$ cones of $\tSigma_r$ , indexed by the choices of $r$-tropical roots. In other words, the restriction of $\tS_r\oner\to \tS_r$ over each stratum splits according to the possible tropicalizations of the roots. 

We denote the closed stratum corresponding to $(\Lambda,\phi_1,\phi_2)$ by $\tS_{r,\Lambda,\phi_1,\phi_2}\oner.$

\begin{prop}
    We have the following degree computations:
\begin{enumerate}
    \item The degree of the map $\tS_r\oner\to \tS_r$ is $(r^{2g})^2$.
    \item The degree of the map $\tS_{r,\Lambda,\phi_1,\phi_2}\oner\to \tS_{r,\Lambda}$ is $(r^{2g-b_1(\Gamma)})^2$.
\end{enumerate}
\end{prop}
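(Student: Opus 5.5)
The plan is to reduce both statements to the structure of the single root moduli space $\tS_r(\mathcal L^{1/r})\to\tS_r$ recalled in Theorem~\ref{thm:logrotsafterbasechange}, and then take fiber products. By construction, $\tS_r\oner$ is the fiber product over $\tS_r$ of the two spaces of log $r$-roots of $(\L_1^r)^\sigma$ on $C^\sigma_{r,1}$ and of $(\L_2^r)^\sigma$ on $C^\sigma_{r,2}$. Degrees of finite flat maps multiply under fiber product, so it suffices to prove, for a single line bundle $\L$, that:
\begin{itemize}
\item the degree of $\tS_r(\L^{1/r})\to\tS_r$ is $r^{2g}$;
\item over a stratum $\tS_{r,\Lambda}$, the part of the preimage with fixed tropical root (weighting $\phi$) has degree $r^{2g-b_1(\Gamma)}$.
\end{itemize}

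For (1), the Proposition from \cite{holmes2023root} recalled in Section~\ref{sec:logroots} says that $S(\L^{1/r})\to S$ is finite flat of degree $r^{2g}$. This is preserved by the base change to the root stack. Alternatively, Theorem~\ref{thm:logrotsafterbasechange} says that $\tS_r(\L^{1/r})$ is a finite \'etale torsor under $\tS_r(\O^{1/r})$. Its degree can then be read on the open locus of smooth curves, where the group is $\Pic^0[r]\simeq\ZZ_r^{2g}$. Squaring gives $(r^{2g})^2$.

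For (2), we restrict to a stratum $\tS_{r,\Lambda}$ with dual graph $\Gamma$. Since the map is a finite \'etale torsor after the root stack base change, the restriction to the stratum is still finite \'etale of degree $r^{2g}$. Its fiber over a geometric point is a torsor under $\LogPic^0_C[r]$. By the exact sequence
\[0\to\Pic^{[0]}\to\LogPic^0\to\mathrm{TroPic}^0\to 0,\]
this group sits in an extension of $\mathrm{TroPic}^0[r]\simeq\ZZ_r^{b_1(\Gamma)}$ by $\Pic^{[0]}[r]$. The latter has order $r^{2g-b_1(\Gamma)}$: the toric part of rank $b_1$ contributes $r^{b_1}$ and the abelian part of dimension $g-b_1$ contributes $r^{2(g-b_1)}$.

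The preimage of the stratum decomposes into open and closed pieces according to the tropicalization of the root. By Proposition~\ref{prop:constructroots}(2), these pieces are indexed by weightings $\phi\in\W_r(\Gamma,\underline{\deg})$, a torsor under $H_1(\Gamma,\ZZ_r)$ of cardinality $r^{b_1(\Gamma)}$. The translation action of $\LogPic^0[r]$ on the roots is compatible with the tropicalization map to $\mathrm{TroPic}^0[r]$. Hence each tropical root class is a torsor under the kernel $\Pic^{[0]}[r]$, so each piece $\tS_{r,\Lambda,\phi}$ maps with degree $r^{2g-b_1(\Gamma)}$. As a consistency check, $r^{b_1}\cdot r^{2g-b_1}=r^{2g}$. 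Taking the fiber product over the two roots gives $(r^{2g-b_1(\Gamma)})^2$.

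The main obstacle is justifying that the stratum of $\tS_r\oner$ indexed by $(\Lambda,\phi_1,\phi_2)$ is exactly such a union of torsor-orbits, and that the degree there is not affected by ramification or nonreduced structure. If $\tS_r(\L^{1/r})\to\tS_r$ is strict and \'etale, then strata pull back to strata and no multiplicities appear; this is where the root stack base change is essential. Before the base change, the map is only log \'etale and the strata degrees would carry extra lattice-index factors. I would argue this stratum by stratum using the strictness in Theorem~\ref{thm:logrotsafterbasechange}, together with the bijection of Proposition~\ref{prop:constructroots}(2) applied on each fiber.
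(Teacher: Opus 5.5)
Your proposal is correct and follows essentially the paper's argument. Part (1) comes from the finite \'etale torsor structure after the root-stack base change. Part (2) comes from the exact sequence $\Pic^{[0]}[r]\to\LogPic^0[r]\to\mathrm{TroPic}^0[r]$, which forces the $r^{2g}$ roots of each line bundle to be spread uniformly over the $r^{b_1(\Gamma)}$ tropical roots. Your extra details make explicit what the paper leaves implicit: factoring through the two single-root spaces, the count $|\Pic^{[0]}[r]|=r^{b_1}\cdot r^{2(g-b_1)}$, and using strictness/\'etaleness to rule out multiplicities on strata.
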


\begin{proof}
\begin{enumerate}
    \item There are exactly $r^{2g}$ (log-)roots to a given (log-)line bundle; in fact, since we are working on a suitable root stack, the morphism is in fact \'etale of the given degree (see Theorem~\ref{thm:logrotsafterbasechange}).
    
    \item There is a short exact sequence of groups over Log schemes over $\tS_r\oner$:
    \[1\to\Pic^{[0]}_{\widetilde{C}_r/\tS_r\oner}[r]\to\LogPic_{\widetilde{C}_r/\tS_r\oner}[r]\to \mathrm{TroPic}^0_{\widetilde{C}_r/\tS_r\oner}[r]\to 1.\]
    Thus, the log-roots are spread uniformly among the possible tropicalizations.
   This implies that  in particular that the degree splits uniformly among the $r^{2b_1(\Gamma)}$ strata $\tS_{r,\Lambda,\phi_1,\phi_2}\oner$ over a given  $\tS_{r,\Lambda}.$
\end{enumerate}
\end{proof}

The stratification of $\tS_r\oner$ induces, simply by restriction along the open immersion,  a stratification of each sub-stack $\tS_r^{k,\theta}\oner$. 

However, the degrees of projection restricted to a component  $\tS_r^{k,\theta}\oner$ now depend on the correlator and thus needs to be computed with a more fine analysis.
That is the content of following Proposition~\ref{prop:degreecomputation} below. 

\begin{defi}
Let $d_{r,\Lambda,\phi_1,\phi_2}^{k,\theta}$ be the degree of the map $\tS_{r,\Lambda,\phi_1,\phi_2}^{k,\theta}\oner\to \tS_{r,\Lambda}$. We define the correlated degree as the following group algebra element:
$$\mathbf{d}_{r,\Lambda,\phi_1,\phi_2}^k = \sum_{(r/k)\theta\equiv 0} d_{r,\Lambda,\phi_1,\phi_2}^{k,\theta}\cdot(\theta).$$
\end{defi}

\begin{prop}\label{prop:degreecomputation}
    The correlated degree has the following expression:
    $$\mathbf{d}_{r,\Lambda,\phi_1,\phi_2}^k = \bfG_{b_1(\Gamma),2g}(\frac{r}{k},\omega_1,\omega_2),$$
    where $\omega_j$ is the order of $T_j$ (the multidegree of $\T_j$), and $b_1(\Gamma)$ the genus of the graph associated to the stratum.
\end{prop}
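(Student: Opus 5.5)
We will compute $\mathbf{d}_{r,\Lambda,\phi_1,\phi_2}^k$ by working fiberwise over a geometric point $s$ of the open part of the stratum $\tS_{r,\Lambda}$. There we identify the fiber of $\tS^k_{r,\Lambda,\phi_1,\phi_2}\oner\to\tS_{r,\Lambda}$ with a set of pairs of torsion log line bundles. Since $\tS_r\oner\to\tS_r$ is finite étale (Theorem~\ref{thm:logrotsafterbasechange}), the degree of the closed stratum over $\tS_{r,\Lambda}$ equals the number of points in such a generic fiber. The correlated degree is then the sum over these points of $(W_{r/k}(\T_1,\T_2))$.

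Concretely, a point of the fiber is a pair $(\R_1,\R_2)$ of log roots, equivalently a pair $(\T_1,\T_2)=(\R_1\otimes\L_1^{-1},\R_2\otimes\L_2^{-1})$ in $\LogPic^0_{C_s}[r]^2$. The condition of lying in $\tS^k_r$ says that $\T_j\in V:=\LogPic^0_{C_s}[r/k]$. The condition of lying on the cone indexed by $(\phi_1,\phi_2)$ says that the tropicalization of $\T_j$ is the prescribed tropical class $[T_j]\in\mathrm{TroPic}^0_{C_s}[r/k]$; by Proposition~\ref{prop:constructroots} this class is determined by $\phi_j$. Note that $[T_j]$ must already be $(r/k)$-torsion, otherwise the fiber is empty, consistent with extending $\bfG$ by $0$.

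Using Example~\ref{ex:troPic} with $r$ replaced by $r/k$, write
\[V=(L\oplus L^*)\oplus W,\qquad L=\mathrm{TroPic}^0[r/k],\quad L^*=H^1(\Gamma,\mu_{r/k}),\quad W=\Pic^0_{\widetilde C_s}[r/k].\]
Here the extension $0\to \Pic^{[0]}[r/k]\to V\to L\to 0$ has kernel $\Pic^{[0]}[r/k]\cong L^*\oplus W$, a toric part and an abelian part. Choose lifts $\lambda_j\in V$ of $[T_j]$ of order $\omega_j$. The fiber is then $(\lambda_1+L^*+W)\times(\lambda_2+L^*+W)$, whose cardinality is $(r/k)^{2(2g-b_1)}$, matching part (2) of the previous proposition at $k=1$. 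Its refined count with respect to the logarithmic Weil pairing is exactly $\bfG_{b_1(\Gamma),2g}(r/k,\lambda_1,\lambda_2)$. Proposition~\ref{prop:expression_G} then says this depends only on $\omega_1,\omega_2$, which gives the claimed formula.

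The main obstacle is justifying that the logarithmic Weil pairing $W_{r/k}$ on $V$ has exactly the block form used in $\bfG$. That is: $W$ is orthogonal to $L^*$ and to the chosen lifts of $L$, $L^*$ is isotropic, and the pairing between $L$ and $L^*$ is the standard one. I would take this from \cite[Proposition~2.13]{blommecarocci2025DR}, as quoted in Example~\ref{ex:troPic}.

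Where the lifts $\lambda_j$ are not automatically orthogonal to $W$ and to each other, I would modify them by elements of $L^*+W$, using non-degeneracy, so that the coset sums are unchanged. Alternatively, I would verify the orthogonality directly using Proposition~\ref{prop:weil-reciprocity-law}. In that computation the representatives $D_w$ of Definition~\ref{def:torsiontropical} are supported on the exceptional vertices, and the vertex contributions factor component by component.

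A secondary point is the relation $W_r=k\,W_{r/k}$. It means the correlator indexing $\tS_r^{k,\theta}$ should be read via $W_{r/k}$, which is the convention in the definition of $\mathbf{d}^k$.
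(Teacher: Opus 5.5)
Your proposal is correct and follows essentially the same route as the paper. Both arguments use the étaleness of $\tS_r\oner\to\tS_r$ to reduce to counting pairs of $(r/k)$-torsion log line bundles with prescribed tropicalizations over a geometric point, split $\LogPic[r/k]$ into tropical, toric and abelian parts compatibly with the logarithmic Weil pairing via \cite[Proposition~2.13]{blommecarocci2025DR}, and conclude with Proposition~\ref{prop:expression_G}. Your explicit adjustment of the lifts $\lambda_j$ by elements of $L^*+W$ to reach the block form is extra care that the paper leaves implicit. Your assignment $L=\mathrm{TroPic}^0[r/k]$, $L^*=H^1(\Gamma,\mu_{r/k})$ is the one consistent with the definition of $\bfG$; the paper's proof writes these two labels the other way round.
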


\begin{proof}

    Up to changing the torsion order we consider, we can assume that $k=1$.  
     We have the two following short exact sequences:
    \[1\to\Pic^{[0]}_{\widetilde{C}_r/\tS_r\oner}[r]\to\LogPic_{\widetilde{C}_r/\tS_r\oner}[r]\to \operatorname{TroPic}^0_{\widetilde{C}_r/\tS_r\oner}[r]\to 1,\]
    \[1\to H^1(\Gamma,\mu_r)\to\Pic^{[0]}_{\widetilde{C}_r/\tS_{r,\Lambda,\phi_1,\phi_2}\oner}[r]\to \prod_v\operatorname{Pic}^0_{(\widetilde{C}_r)_v}[r]\to 1,\]
    where $(\widetilde{C}_r)_v$ are the components of the simultaneus normalization of $\widetilde{C}_r$ over the stratum.
    

   Notice that $\tS_{r,\Lambda,\phi_1,\phi_2}^{k,\theta}\oner\to \tS_{r,\Lambda}$ is in fact \'etale since it is obtained by base change from the morphism

   \[\tS_{r}^{k,\theta}\oner\hookrightarrow\tS_{r}^{k}\oner\hookrightarrow \tS_{r}\oner\to\tS_{r};\]
   we explained before in the section that the first two morphisms are open immersion and the last one is in fact \'etale.
   
   Therefore, to compute the degree of this flat morphism, we can work locally around a geometric point of the given strata.

In particular, since we are dealing with $r$-torsion groups, both short exact sequences split and we get
\[\LogPic_{\widetilde{C}_r}[r]\cong \operatorname{TroPic}^0_{\widetilde{C}_r}[r]\oplus H^1(\Gamma,\mu_r)\oplus\prod_v\Pic^0_{(\widetilde{C}_r)_v}[r].\]
Furthermore, this splitting is compatible with the Weil pairing in the sense that the first two factors are dual with each other with respect to the pairing, and the last factor is orthogonal to the two first, with its induced Weil pairing, which is the classical for Jacobians (see \cite[Prop 2.13]{blommecarocci2025DR}).

Having reduced the degree computation around a point, the computation of the  degree amounts to count pairs of torsion log-line bundles with fixed tropicalization that have a prescribed Weil pairing, which is exactly what has been computed in Proposition \ref{prop:expression_G}, taking $L= H^1(\Gamma,\mu_r),$  $L^*=\operatorname{TroPic}^0_{\widetilde{C}_r}[r]$ and $W=\prod_v\Pic^0_{(\widetilde{C}_r)_v}[r].$
\end{proof}

In particular, the correlated degree over the stratum of smooth curves, i.e. the family of degrees of $\tS^\theta_r\oner\to \tS_r$ is given by $\bfF_{2g}(r)$.

\subsection{Relation between SpinDR and DR}
Recall that the almost twistability of $\L_1^r$ and $\L_2^r$ on the stack $\tS_r$ implies that $\tS_r$ supports representatives of the cycles $\operatorname{logDR}(\L_1^r)$ and $\operatorname{logDR}(\L_2^r)$; in fact these can be computed explicitly using the twisting functions. We now want to argue that for every $k$ dividing $r$ the stack $\tS_r$ supports representatives of the cycles $\operatorname{logDR}(\L_1^{r/k})$ and $\operatorname{logDR}(\L_2^{r/k}).$ To do so, we employ the weaker criterion of Lemma \ref{lem:fineenoughsubdivision}: Consider the following factorization:
\[
 \begin{tikzcd}
 \mathbf{Twist}^\mzero(\tS_r,\L_i^{r/k})\arrow[r,"j_k"] \arrow[rd, "g" ] & \tS_r\\    
 & \mathbf{Twist}^\mzero(\tS_r,\L_i^r) \arrow[u,"j_1"],
 \end{tikzcd}
\]
where the map $g$ just scales the PL-function by the factor $k$. We notice that both $j_1$ and $g$ are strict, thus so is the composition. Indeed, the tropicalization of the moduli stack of degree zero twists is by definition the tropical DR considered in Remark~\ref{rem:tropDR}, and from the description of the cone structure for the tropicalization of $\tS_r$ the fact that $\Sigma_{\mathbf{Twist}^\mzero(\tS_r,\L_i^r)}$ is a sub-complex is immediate; same for  $\Sigma_{\mathbf{Twist}^\mzero(\tS_r,\L_i^{r/k})}\subseteq  \Sigma_{\mathbf{Twist}^\mzero(\tS_r,\L_i^r)}$

Therefore we conclude that the pushforward
\[
i_*\vir{\operatorname{logDRL}(\L_i^{r/k})}
\]
is a representative of $\operatorname{logDR}(\L_i^{r/k})$.
\begin{remark}
Note that its not clear to the authors if $\L_i^{r/k}$ is almost-twistable for $k>1$ without modifying $\tS_r$ further, namely it is not clear whether it is possible to extend the twisting function on the tropical curve over $\Sigma_{\mathbf{Twist}^\mzero(\tS_r,\L_i^{r/k})}$ to a suitable destabilization of the tropical curve over $\Sigma_{\tS_r}.$
\end{remark}
We will write  $\mathrm{logDDR}(\L_1^{r/k},\L_2^{r/k})$ for the product of the two aforementioned representatives of $\mathrm{logDR}(\L_i^{r/k})$. Recall by Proposition \ref{prop:productformula}
\[
\mathrm{logDDR}(\L_1^{r/k},\L_2^{r/k})=i_*\vir{\operatorname{logDRL}(\L_1^{r/k},\L_2^{r/k})}
\]
The next proposition we explain how these cycles are related



\medskip

\begin{prop}\label{prop:push-forward-compatibility}
    Under the map $\nu\colon \tS_r\oner\to\tS_r$, we have the following:
    \begin{enumerate}
        \item The cycle $\operatorname{logDDR}^k(\R_{1},\R_{2})$ pushes forward to $\operatorname{logDDR}(\L_1^{r/k},\L_2^{r/k})$.
        \item The cycle $\operatorname{logDDR}^{k,\theta}(\R_{1},\R_{2})$ pushes forward to $\operatorname{logDDR}^\theta(\L_1^{r/k},\L_2^{{r/k}})$.
    \end{enumerate}
\end{prop}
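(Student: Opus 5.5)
By Proposition~\ref{prop:change-order}, the substack $\tS_r^k\oner$ is the base change to $\tS_r$ of the $(r/k)$-spin space of $(\L_1^{r/k},\L_2^{r/k})$. We may therefore rename $r/k$ as $r$ and reduce to $k=1$, working on the full spin space over the model $\tS_r$. The plan is to compare the two DDR loci at the level of moduli spaces, not classes. We show that $\nu$ restricts to a map of the loci
\[\DDRL(\R_1,\R_2)\longrightarrow \operatorname{logDDRL}(\L_1^{r},\L_2^{r})\]
that is an isomorphism, and that this isomorphism is compatible with the obstruction theories. Pushforward along the closed embeddings into $\tS_r\oner$ and $\tS_r$ then gives (1). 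Part (2) follows by restricting to the open and closed pieces indexed by $\theta$.

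\textbf{Step 1 (isomorphism of loci).} A point of $\DDRL(\R_1,\R_2)$ is a point of the spin space together with log trivializations of $\R_1$ and $\R_2$; by Proposition~\ref{prop:almosttwistable}, these are honest trivializations, since the twist is $0$. Given such trivializations, $\T_j=\R_j\otimes\L_j^{-1}\equiv\L_j^{-1}$, so $\L_j\equiv\R_j$ is log trivial. Conversely, over a point of $\tS_r$ where $\L_1,\L_2$ are log trivial, the pair of roots $(\R_1,\R_2)=(\L_1,\L_2)$ is a canonical lift. Since a line bundle is trivial exactly when the chosen root $\R_j$ is trivial, and the log root of the trivial bundle equal to $\O$ is unique, this lift is the unique point of the fiber that lies in the locus. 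So the map is bijective on $T$-points for all log schemes $T$. Both loci sit strictly in their bases, by Lemma~\ref{lem:fineenoughsubdivision} and the strictness of $j_1$ discussed above, and $\nu$ is strict and étale by Theorem~\ref{thm:logrotsafterbasechange}. Hence the map is an isomorphism onto $\operatorname{logDDRL}(\L_1,\L_2)$, namely onto the component of the $\L_j^r$-locus where the natural roots are trivial. This is exactly the locus representing $\operatorname{logDDR}(\L_1^{r/k},\L_2^{r/k})$ with $k$ restored.

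\textbf{Step 2 (virtual classes).} Both virtual classes are Gysin pullbacks of the unit section $e_2\hookrightarrow \Jac\times\Jac$. The spin map $\varphi_{(\R_1,\R_2)}$ and the map $\varphi_{(\L_1,\L_2)}\circ\nu$ agree on the locus, and their relative obstruction theories are both given by $R^1\pi_*\O$ of the fixed curve. The curves differ only by the exceptional subdivision, which does not change $R^1\pi_*\O$. Since $\nu$ is étale, virtual pullback commutes with it (\cite{manolache2012virtual}), so
\[\nu_*\operatorname{logDDR}^k(\R_1,\R_2)=\operatorname{logDDR}(\L_1^{r/k},\L_2^{r/k}).\]
For (2), the correlator on $\DDRL^\theta(\L_1^{r/k},\L_2^{r/k})$ is $W_{r/k}(\L_1,\L_2)$. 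Under the isomorphism of Step 1 this matches $W_{r/k}(\T_1,\T_2)$, because $\T_j\equiv\L_j^{-1}$ and $W(\L_1^{-1},\L_2^{-1})=W(\L_1,\L_2)$ by bilinearity. So the $\theta$-pieces correspond, and restricting the previous identity gives (2).

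\textbf{Main obstacle.} I expect the hardest part to be Step 2, specifically making rigorous that the obstruction theories agree. The spin DR is defined on a different curve model, $\widetilde C^\sigma_{r,j}$ rather than $C^\sigma_{r,j}$, and $\R_j$ differs from $\L_j$ by the twist $\tau_j$ on the non-trivial-flow strata. I would first check that on the DDR locus the flows $\phi_j$ vanish, so that $\tau_j=0$ and $\R_j$ is pulled back from $C^\sigma_{r,j}$. This reduces the comparison to a single curve, where the obstruction theories coincide tautologically.
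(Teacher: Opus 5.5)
Your overall strategy matches the paper's: identify the two DDR loci via the unique trivial root, then compare Gysin pullbacks using that $\nu$ is étale. Your argument for (2) via $\T_j\equiv\L_j^{-1}$ and bilinearity is also correct.

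\textbf{The gap: Step~1 identifies the wrong target locus.}
\begin{itemize}
\item From $\R_j\equiv\O$ you only get $\T_j\equiv\L_j^{-1}$. Hence $\L_j^{r/k}\equiv\R_j^{r/k}\equiv\O$ on the $k$-piece. Nothing forces the natural root $\L_j$ itself to be trivial, so the claim ``$\L_j\equiv\R_j$ is log trivial'' is false.
\item For the same reason, your section $(\R_1,\R_2)=(\L_1,\L_2)$ lies in the spin DR locus only where $\L_1\equiv\L_2\equiv\O$.
\item As written, Step~1 would give an isomorphism onto $\operatorname{logDDRL}(\L_1,\L_2)$, i.e.\ $\nu_*\operatorname{logDDR}^k(\R_1,\R_2)=\operatorname{logDDR}(\L_1,\L_2)$. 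That is not the claim, and it is not a cosmetic slip.
\item Every point of $\operatorname{logDDRL}(\L_1^{r/k},\L_2^{r/k})$ with correlator $W_{r/k}(\L_1,\L_2)\neq 1$ has some $\L_j$ nontrivial. So all the $\theta\neq 1$ components in (2) lie outside your claimed image, and your own argument for (2) would become vacuous.
\end{itemize}
The correct statement is as follows. Over any point of $\tS_r$ where $\L_1^{r/k},\L_2^{r/k}$ are log trivial, the pair $(\R_1,\R_2)=(\O,\O)$ is a point of $\tS_r^k\oner$, because $\T_j=\L_j^{-1}$ has order dividing $r/k$ there. It is the unique point of the fibre lying in the spin DR locus. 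So $\nu$ maps $\operatorname{logDDRL}^k(\R_1,\R_2)$ bijectively onto the whole of $\operatorname{logDDRL}(\L_1^{r/k},\L_2^{r/k})$, with $\T_j\equiv\L_j^{-1}$, and the correlators match as you say.

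\textbf{Making Steps~1 and~2 rigorous.} Follow the paper rather than renaming $r/k$ as $r$. The renaming is not literally available: $\tS_r$ and the representatives $\R_j$ are built from $\L_j^r$ and weightings modulo $r$, and $\L_j^{r/k}$ is not known to be almost twistable on $\tS_r$. So the strictness you need is that of $j_k=j_1\circ g$, not $j_1$.
\begin{itemize}
\item Work on the open $U_k\subseteq\tS_r$ where both $\L_j^{r/k}$ are tropically trivial, with twists $\alpha_j$.
\item Work also on the open $U_k^{\mathrm{spin}}$ where both $\R_j$ are tropically trivial. There the flows vanish, as you anticipated.
\item Consequently the $\R_j$ on $U_k^{\mathrm{spin}}$ are exactly the honest $(r/k)$-th roots of $\L_j^{r/k}(\alpha_j)$. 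Writing $[r/k]$ for multiplication by $r/k$ on $\Jac\times\Jac$, this gives a cartesian square
\[U_k^{\mathrm{spin}}\cong U_k\times_{\Jac\times\Jac,\,[r/k]}(\Jac\times\Jac).\]
\item The isomorphism of loci is then a formal consequence of $[r/k]\circ e=e$.
\item Equality of virtual classes follows from compatibility of refined Gysin maps in this cartesian diagram (Fulton, Thm.~6.2), with $\nu$ étale.
\end{itemize}
In particular, Step~2 does not follow from the Abel--Jacobi maps merely agreeing on the locus. What is needed is the relation
\[[r/k]\circ\varphi_{(\R_1,\R_2)}=\varphi_{(\L_1^{r/k}(\alpha_1),\L_2^{r/k}(\alpha_2))}\circ\nu\]
on all of $U_k^{\mathrm{spin}}$, which is exactly what produces the cartesian square above.
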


\begin{proof}

The rank $1$ analogue of (1) in the case $k=1$ is due to  \cite{chiodo2024double}.

Let $U_k=\mathbf{Twist}^\mzero(\tS_r,\L_1^{r/k})\cap\mathbf{Twist}^\mzero(\tS_r,\L_2^{r/k})$ be the open where both $\L_1^{r/k}$ and $\L_2^{r/k}$ are tropically trivial. 
Let $\alpha_j$ denote the PL function on the restriction of $C^\sigma_{r,j}$ to $U_k$ such that $\L_j^{r/k}(\alpha_j)$ is multidegree $0$.

\medskip

Let also $U^\mathrm{spin}_k=\mathbf{Twist}^\mzero(\tS^k_r\oner,\R_1)\cap\mathbf{Twist}^\mzero(\tS^k_r\oner,\R_2)$ be the open subset of $\tS_r^k(\frac{1}{r},\frac{1}{r})$ where both $\R_1$ and $\R_2$ are tropically trivial.

\medskip

We use once again that when $(\L_j^r)^\sigma$ has multidegree $\mzero$, the construction of the representatives $\R_j$ for the log-roots has been made in such a way that when also the tropicalization of  $\R_j$ is trivial, then the $\R_j$ are exactly the roots of $(\L_j^r)^\sigma$ as a honest line bundle and no subdivision of the quasi-stable model as been performed. 

\medskip

From the above we can then deduce the following after restricting to the open and closed component $\tS_r^k(\frac{1}{r},\frac{1}{r})$: When $\L_j^{{r/k}}$ has trivial tropicalization, then the $\R_j$ with trivial tropicalization are exactly the $k$-th roots of $\L_j^{{r/k}}(\alpha_j)$ as a honest line bundle.

\medskip

We thus get that the front face of the following commutative diagram is a cartesian square. 
Let $C_j$ denote the restriction of $C^\sigma_{r,j}$ to $U_k$ and similarly we write $\widetilde{C}_j$ for the restriction of $\widetilde{C}^\sigma_{r,j}$ to $U^{\mathrm{spin}}_k$.

\bcd
 & U_k  \arrow[ld,"e"] \arrow[dd, equal] &   & \operatorname{logDRL}^k(\R_{1},\R_{2}) \arrow[ld] \arrow[ll] \arrow[dd] \\
\Jac_{C_1 / U_k}\times\Jac_{C_2 / U_k}
\arrow[dd,"\otimes \frac{r}{k}"]  &                                     & U^\mathrm{spin}_k \arrow[dd, "p", pos=0.2] \arrow[ll, "\tilde{\mathtt{aj}}_{\R}", pos=0.2,crossing over]  & \\
& U_k  \arrow[ld,"e"]            &  & \operatorname{logDRL}(\L_1^{r/k},\L_2^{r/k}), \arrow[ll] \arrow[ld]            \\
\Jac_{C_1 / U_k}\times\Jac_{C_2 / U_k}            &         & U_k \arrow[ll,"\mathtt{aj}_{\L}"] \arrow[from=uu,crossing over]           &        
\ecd
Here we denoted by $e$ the $0$-sections; 
by $\mathtt{aj}_{\L}$ the section determined by $\L_1^{r/k}(\alpha_1)$ and $\L_2^{r/k}(\alpha_2)$. Also by $\tilde{\mathtt{aj}}_{\R}$ we denote the section determined by $\R_{1},\R_{2}.$ 

We notice that the vertical maps in the  cartesian square are \'etale, non proper morphisms. 

Then the logDR loci $\operatorname{logDRL}^k(\R_1,\R_2)$ and $\operatorname{logDRL}(\L_1^{r/k}, \L_2^{r/k})$ are defined as the pullbacks of $0$-sections from the Jacobians via the Abel-Jacobi sections.

The Abel Jacobi section $\mathtt{aj}_{\L}$ is lci, which implies by flat base change that $\tilde{\mathtt{aj}}_{\R}$ is also lci.
We can thus endow $\operatorname{logDRL}^k(\R_1,\R_2)$ and $\operatorname{logDRL}(\L_1^{r/k}, \L_2^{r/k})$ with virtual classes via refined Gysin pull back defined from the top and bottom cartesian squares:
\begin{align*}
\vir{\operatorname{logDRL}^k(\R_1,\R_2)}:=&\tilde{\mathtt{aj}}_{\R}^{!}([U]) , \\
\vir{\operatorname{logDRL}(\L_1^{r/k}, \L_2^{r/k})}:=&\mathtt{aj}_{\L}^{!}([U]).
\end{align*}
Even though the morphism $p$ is non proper, its restriction to the log DDR loci is. This is due to the fact that the back square is cartesian. The  proper push-forward  \cite[Theorem~6.2]{fulton2013intersection} identifies the virtual classes. Note the refined gysin pullback used here to endow $\operatorname{logDRL}^k(\R_1,\R_2)$ with a virtual class is different from the discussion in \cite[Section 1.5]{chiodo2024hodge}. The difference comes from the target of the abel jacobi section induced by $\R_j$ being $\Jac_{C_j / U_k}$ vs. $\Jac_{p^*C_j/U^{spin}_k}$. Nevertheless they both lead to the same virtual class since the natural map $\Jac_{p^*C_j/U^{spin}_k}\to \Jac_{C_j / U_k}$ is étale.

\medskip

We now proceed to prove (2). We have constructed above a map, 
$$\mathrm{logDRL}^k(\R_1,\R_2)\to\mathrm{logDRL}(\L_1^{r/k},\L_2^{r/k})$$
compatible with the virtual classes. The locus $\mathrm{logDRL}(\L_1^{r/k},\L_2^{r/k})$ splits according to the Weil pairing $W_{r/k}(\L_1,\L_2)$, while the locus $\mathrm{logDRL}^k(\R_1,\R_2)$ splits according to the Weil pairing $W_{r/k}(\T_1,\T_2)$. However, on the DR-locus, we have $\R_1\equiv\R_2\equiv\O$. Therefore, we also have $\T_1\equiv\L_1$ and $\T_2\equiv\L_2$, so that the two refinements agree.
\end{proof}

\subsection{Proof of Multiple Cover Formula}

Having fixed suitable models for $S$ and $S\oner$ such that the log DR cycles are represented by the DR-class of twists of the line bundle on the fixed logarithmic model and can in fact computed applying
the universal DR-formula, and thanks to Proposition \ref{prop:push-forward-compatibility}, we can get tautological formulas for each $\DDR^\theta((\L_1^r)^\sigma,(\L_2^r)^\sigma)$. 

\medskip

To do so it is in fact sufficient to: first apply the UniDR formula \cite{bae2023pixton} restricting to the $\theta$-component  of the spin space $\tS_r\oner;$ then compute the push-forward along the map $\tS_r\oner\to\tS_r.$

This was in fact already the strategy adopted in \cite{blommecarocci2025DR} to prove the correlated refinement of the DR-formula with target varieties.

\medskip

The multiple cover formula for the DDR-class is then a consequence  of the degree computation  of the projection performed in Proposition~\ref{prop:degreecomputation}; in particular it does not rely on the specific form of Pixton's formula; we only need to know that the latter is given as a sum of decorated strata classes in the sense of \cite{janda2017DRcurves,holmes2025logDR}.

\begin{proof}[Proof of Theorem \ref{theo:MCF-FFR}]

\textbf{Step 1: Expression for $\mathbf{DDR}(\R_1,\R_2)$.}   By the almost twistability of $(\widetilde{C}^\sigma_{r,j}\to \tS_r\oner, \R_j,0)$ proved in Proposition~\ref{prop:almosttwistable}, the classes    $\DR(\R_1)$ and $\DR(\R_2)$  obtained applying the universal DR-formula for the representatives $\R_1,\R_2$
provide representatives for the logDR-classes. We then just have to multiply the two expressions to get the formula for the intersection.

Since, as discussed above, we do not need any properties of the explicit decoration coming from Pixton's universal formula, we write
the expression in the following concise form:
    \[\DDR(\R_1,\R_2) = \sum_{\Lambda,\phi_1,\phi_2} \left[ \tS_{r,\Lambda,\phi_1,\phi_2}\oner\right]\cap\operatorname{Dec}_{\Lambda,\phi_1,\phi_2},\]
    where, as explained in Section~\ref{sec-stratification-degree-computation}, $\Lambda$ indexes the strata of $\tS_r$, and $(\Lambda,\phi_1,\phi_2)$ indexes the strata of $\tS_r\oner$. The ``$\operatorname{Dec}$'' stands for ``Decoration''. These can be computed explicitly applying the UniDR formula \cite{bae2023pixton}; they  are cohomological insertions pulled back from the boundary of $\mathfrak{Pic}_{g,n,0}$ (involving in particular $\psi$-classes at the markings and at the half edges and Chern classes of the universal line bundle), with coefficients depending on $\Lambda,\phi_1,\phi_2$, namely on the topology of the dual graph $\Gamma$ and on the multidegree of $\R_1,\R_2$, fixed by $\phi_1,\phi_2$.

    \medskip
    
    In particular, since the decorations do not depend on the order of the log $r$-torsion bundle associated to $\R_1,\R_2$ nor on their Weil pairing,
 after restriction to any of the substacks $\tS_r^{k,\theta}\oner$ described in Section \ref{sec-stratification-degree-computation} we obtain the following formula for the :
    $$\DDR^{k,\theta}(\R_1,\R_2) = \sum_{\Lambda,\phi_1,\phi_2} \left[ \tS^{k,\theta}_{r,\Lambda,\phi_1,\phi_2} \oner\right]\cap\operatorname{Dec}_{\Lambda,\phi_1,\phi_2},$$
    where in the sum we only take the $\phi_j$ whose associated torsion divisor $T_j$ is of $(r/k)$-torsion.

\medskip

    \textbf{Step 2: Projection.}
    We push-forward to $\tS_r$ restricting to each stratum in the previous expressions as follows: the decoration $\operatorname{Dec}_{\Lambda,\phi_1,\phi_2}$ is unchanged since the cohomological insertions are actually pulled-back from $\tS_r$ (only the coeffecients depend on the multidegrees determined by $\phi_1,\phi_2$).
    
    Therefore, we only need to compute the degree of the projection restricted to each stratum; this has been done in Proposition \ref{prop:degreecomputation}. Since $\DDR^{k,\theta}(\R_1,\R_2)$ pushes forward to $\operatorname{logDDR}^\theta(\L_1^{r/k},\L_2^{r/k})$ by Propositions \ref{prop:change-order} and \ref{prop:push-forward-compatibility}, we get
    $$\operatorname{logDDR}^\theta(\L_1^{r/k},\L_2^{r/k}) = \sum_{\Lambda,\phi_1,\phi_2} d^{k,\theta}_{r,\Lambda,\phi_1,\phi_2}\cdot [\tS_{r,\Lambda}]\cap\operatorname{Dec}_{\Lambda,\phi_1,\phi_2}.$$
    Making the sum in the group algebra over all correlators $\theta$, and using Proposition \ref{prop:degreecomputation} computing the degree $d^{k,\theta}_{r,\Lambda,\phi_1,\phi_2}$, we get
    $$\mathbf{logDDR}(\L_1^{r/k},\L_2^{r/k}) = \sum_{\Lambda,\phi_1,\phi_2} \bfG_{b_1(\Gamma),2g}(\frac{r}{k},\omega_1,\omega_2)\cdot [\tS_{r,\Lambda}]\cap\operatorname{Dec}_{\Lambda,\phi_1,\phi_2},$$
    where $\omega_1$ and $\omega_2$ are the orders of the tropical divisors associated to $\phi_1$ and $\phi_2$, and $\bfG$ is understood to be $0$ whenever $\omega_1,\omega_2$ do not divide $r/k$.

    \medskip
    
    \textbf{Step 3: Conclusion.} Using linearity and summing over all $(\Lambda,\phi_1,\phi_2)$, the MCF is deduced from the algebraic $2g$-MCF for $l\mapsto\bfG_{b_1(\Gamma),2g}(l,\omega_1,\omega_2)$, which the content of Proposition \ref{prop:expression_G}.
\end{proof}

Notice that as a consequence of Proposition~\ref{prop:push-forward-compatibility}, and implicit in the proof of Theorem~\ref{theo:MCF-FFR} we have that:

\begin{coro}\label{cor:tautological}
    The correlated (logarithmic) DDR cycle $\operatorname{logDDR}^\theta(\L_1^{r},\L_2^{r})$ is tautological.
\end{coro}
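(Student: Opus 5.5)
The plan is to read tautologicality off the explicit formula produced in Step 2 of the proof of Theorem~\ref{theo:MCF-FFR}, after pushing forward along the log modification $\tS_r\to S$. By Proposition~\ref{prop:change-order} and Proposition~\ref{prop:push-forward-compatibility} (taking $k=1$), the class $\operatorname{logDDR}^\theta(\L_1^r,\L_2^r)$ is represented on $\tS_r$ by $\nu_*\DDR^{1,\theta}(\R_1,\R_2)$. By Proposition~\ref{prop:almosttwistable}, the factors $\DR(\R_j)$ are computed by the universal DR formula of \cite{bae2023pixton}. This gives the expression
$$\operatorname{logDDR}^\theta(\L_1^{r},\L_2^{r}) = \sum_{\Lambda,\phi_1,\phi_2} d^{1,\theta}_{r,\Lambda,\phi_1,\phi_2}\,[\tS_{r,\Lambda}]\cap\operatorname{Dec}_{\Lambda,\phi_1,\phi_2}.$$
Here each degree $d^{1,\theta}_{r,\Lambda,\phi_1,\phi_2}$ is a rational number, computed explicitly in Proposition~\ref{prop:degreecomputation}.

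The key point is that each summand is tautological in the logarithmic sense. The decorations $\operatorname{Dec}_{\Lambda,\phi_1,\phi_2}$ are polynomials in $\psi$-classes at markings and half-edges, and in Chern classes of $\L_j^r$ twisted by strict PL functions. All of these are pulled back from the stratum $\tS_{r,\Lambda}$ of the log smooth stack $\tS_r$. The stratum classes $[\tS_{r,\Lambda}]$, and the classes of the twisting line bundles $\O(\alpha)$, are images of (strict) piecewise polynomials under $\Phi$. Hence each term lies in the subring of $\mathrm{logCH}(S)$ generated by $\Phi(\mathrm{PP}(\Sigma_S))$ together with the classes $\psi_i$ and $c_1(\L_j)$. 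For $S=\Mgnbar$ with $\L_j=\O(\tbfa),\O(\tbfb)$, this is the logarithmic tautological ring in the sense of \cite{holmes2022intersection,molcho2023hodge}. Pushing forward to $\Mgnbar$ gives ordinary tautological classes, as in \cite{holmes2022intersection}.

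The one delicate point is the root stack $\tS_r\to S_r$. This is not a subdivision but a change of lattice, so I need the pushforward of the decorated strata classes from $\tS_r$ to $S_r$. That map is a bijection on strata, and on each stratum it has degree given by an explicit power of $r$, coming from the gerbe and the lattice index. So each decorated stratum class pushes forward to a rational multiple of the corresponding decorated stratum class on $S_r$. The decorations themselves are pulled back from $S_r$, once one uses that $\psi$-classes on the twisted curve agree with those on the coarse curve up to a factor of $r$.

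I expect this bookkeeping to be the main obstacle, but it only affects coefficients, never whether a class is tautological. Since $S_r\to S$ is a log blow-up, the classes on $S_r$ lie in $\mathrm{logCH}(S)$ and are tautological there. This proves the corollary, and the non-log statement for $\DDR^\theta$ follows by proper pushforward.
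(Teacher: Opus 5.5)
Your argument is correct and is essentially the paper's own. The paper reads the corollary off the Step~2 formula $\operatorname{logDDR}^\theta(\L_1^{r},\L_2^{r})=\sum d^{1,\theta}_{r,\Lambda,\phi_1,\phi_2}[\tS_{r,\Lambda}]\cap\operatorname{Dec}_{\Lambda,\phi_1,\phi_2}$, obtained from Proposition~\ref{prop:push-forward-compatibility} and the universal DR formula on the almost twistable spin space, exactly as you do. Your root-stack bookkeeping is harmless but unnecessary: the paper counts root constructions as logarithmic modifications, so these decorated strata classes on $\tS_r$ already lie in $\mathrm{logCH}(S)$ (and the curve used is the resolved coarse curve $\widetilde{C}$, not the twisted one).
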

\section{Multiple Cover Formula for toric surfaces}\label{sec:toric}
The goal of this section is to define the correlated refinement of the log GW invariants for toric surfaces and prove that these satisfy the multiple cover formula, giving evidence of Conjecture~\ref{conj:generalizedTakahashi}. We achieve this by adapting the methods of \cite{ranganathan2024logarithmic} to correlated logarithmic Gromov--Witten theory of toric surfaces (Definition \ref{def:corGWcycle}).

\subsection{Moduli space of log maps to ($K$-rubber) $\Gmlog^2$}
Let $\Sigma$ be a complete fan in $\mathbb{R}^2$ and $X=X_\Sigma$ the associated projective toric surface with dense torus $\Gm^2$.
We fix two tangency profiles $\bfa=(a_1, ..., a_n)$ and $\bfb=(b_1, ..., b_n)$. Let $\M_{g,\bfa,\bfb}(X)$ denote the moduli space of stable log maps to $X$ with discrete data given by $(g, \bfa,\bfb).$ A key observation in \cite{ranganathan2024logarithmic} is the utility of (virtual) birational models of $\M_{g,\bfa,\bfb}(X)$. Among them lies a distinguished (although not representable by an algebraic stack with log structure) minimal model given by the moduli space of stable log maps to $\Gmlog^2$.

\medskip

Let $K$ be a proper subgroup of $\Gmlog^2$ and let us denote by $K_{\mathrm{\mathrm{trop}}}$ the image under the morphism $\Gmlog^2\to \Gmtrop^2$. In practice, $K$ is  either $\Gmlog$, $\Gmlog^2$ or a finite subgroup of $\Gm^2$ (including the trivial group).
\begin{defi}
The moduli space of stable $K$-rubber maps to $\Gmlog^2$, denoted $\mathrm{R}^{K}_{g, \bfa, \bfb}(\Gmlog^2)$, is the fibered category over $\mathrm{LogSch^{fs}}$ whose fiber over $S$ are pairs 
$
\left( C\xrightarrow{\pi}S, \ (\widetilde{\alpha},\widetilde{\beta}) \right),
$
where $\pi$ is a stable log curve and $(\widetilde{\alpha},\widetilde{\beta})\in H^0(S,(\pi_{*}M_{C}^{\gp})/K )$, such that the induced PL map $\Gamma\xrightarrow{(\alpha,\beta)}\mathbb{R}^2$ (well-defined up to translation by $K_{\mathrm{\mathrm{trop}}}$) has  fixed slope  along the unbounded legs given by the vectors $(a_1,b_1), ..., (a_n,b_n)$.
\end{defi}
For $K$ trivial  we denote this space simply by $\M_{g, \bfa, \bfb}(\Gmlog^2)$ (\cite[Definition 1.5.1]{ranganathan2024logarithmic}) and for $K=\Gmlog^2$ we write $\mathrm{R}_{g, \bfa, \bfb}(\Gmlog^2)$ (\cite[Definition 1.6.1]{ranganathan2024logarithmic}). The first one is not representable by a stack with log structure and only exists as a fibered category over  $\mathrm{LogSch^{fs}}$. In contrast, the second one is representable by a stack with log structure and it coincides with the double double ramification locus $\mathrm{DDRL}(\mathcal{O}(\bfa), \mathcal{O}(\bfb))$ (See Example \ref{ex:rublog}). The relation between these two spaces is at the heart of the arguments in \cite{ranganathan2024logarithmic}.

For $K_1\subseteq K_2$ proper subgroups of $\Gmlog^2$ we get the natural map:

\[
\epsilon_{K_1, K_2}: \mathrm{R}^{K_1}_{g, \bfa, \bfb}(\Gmlog^2)\longrightarrow \mathrm{R}^{K_2}_{g, \bfa, \bfb}(\Gmlog^2),
\]
which can be shown, proceeding as in \cite[Section~1.7]{ranganathan2024logarithmic}, to be a $K_2 / K_1$-torsor
\medskip

\begin{rem}
The moduli space of stable $K$-rubber maps to $\Gmlog^2$ with $K \neq 1, \Gmlog^2$ were not defined in \cite{ranganathan2024logarithmic}. We have done so in this paper to correct a minor oversight in \cite{ranganathan2024logarithmic}, which is explained in Remark \ref{rem:torsion}. For the statements about the torsor structure of $\epsilon_{K_1,K_2}$ as above, and the virtual structure on these spaces we have chosen to still reference the propositions in \cite{ranganathan2024logarithmic} as the proofs only require straightforward modifications.
\end{rem}

The moduli space of stable $K$-rubber maps to $\Gmtrop^2$, denoted $\mathrm{R}^{K}_{g, \bfa, \bfb}(\Gmtrop^2)$, is defined in the same way. We just replace the section of $(\pi_{*}M_{C}^{\gp})^{\oplus 2} / K$ with a section  of $(\pi_{*}\overline{M}_{C}^{\gp})^{\oplus 2} / K_{\mathrm{\mathrm{trop}}}$. The latter can be viewed as a PL-map  $ \Gamma\to\mathbb{R}^2;$ we require it to be balanced.
Just as before for $K=1$ we denote this space by $\M_{g, \bfa, \bfb}(\Gmtrop^2)$ and for $K=\Gmlog^2$ we write $\mathrm{R}_{g, \bfa, \bfb}(\Gmtrop^2)$.
\subsubsection{The tropicalization of the DDR-locus}
In \cite{molcho2024case}, the authors introduce a combinatorial cone stack, which we denote by $\mathrm{DDR}_g^{\mathrm{\mathrm{trop}}}(\bfa,\bfb)$; in the notation of \cite{molcho2024case}, this stack is written $\mathrm{TC}_{g}(\bfa,\bfb)^{\mathrm{\mathrm{trop}}}$.
 Remark \ref{rem:tropDR}
We now describe its tropical moduli theoretic interpretation (for the exact construction we refer to \cite{molcho2024case}[Section 4.4]).

\medskip
An point in a cone of this cone stack  $p\in \mathrm{DDR}_g^{\mathrm{\mathrm{trop}}}(\bfa,\bfb)$ parametrizes the following data:
\begin{itemize}
     \item A metrized dual graph $\Gamma_p$ of a $n$-marked genus $g$ stable tropical curve.
     \item A balanced piece-wise linear map $f_p:\Gamma_p\to \mathbb{R}^2$, well-defined up to global $\mathbb{R}^2$-action, such that the slopes of the two projections along the unbounded legs are given by $\bfa$ and $\bfb$.
 \end{itemize} 

We write $[f_p:\Gamma_p\to \mathbb{R}^2]$ for the equivalence class of PL-maps parametrized by $p$, with $f_p$ a representative of that equivalence class. For a fixed stable tropical curve such a map, when it exists, this representative is unique up to $\mathbb{R}^2$-translation. 

The cones are indexed by the discrete data, i.e. the dual graph $\Gamma$ and the slope along the edges of the piece-wise linear function $f_p;$ the description of the cones is completely analogous to the one given in  Remark \ref{rem:tropDR} for the usual DR.

It follows from the modular description  that $\mathrm{DDR}_g^{\mathrm{\mathrm{trop}}}(\bfa,\bfb)$ is a substack of $\M^{\mathrm{\mathrm{trop}}}_{g,n}$. The relationship with $\mathrm{R}_{g,\bfa,\bfb}(\Gmtrop^2)$ is expressed by the following cartesian diagram of fs log stacks:
\[
\begin{tikzcd}
\mathrm{R}_{g, (\bfa, \bfb)}(\Gmtrop^2) \arrow[r,] \arrow[d,swap] \arrow[rd, phantom, "\square"]
& \mathrm{DDR}_g^{\mathrm{\mathrm{trop}}}(\bfa,\bfb) \arrow[d,] \\
\overline{\M}_{g,n} \arrow[r]
& \M^{\mathrm{\mathrm{trop}}}_{g,n}.
\end{tikzcd}
\]
where we implicitly used the equivalence of combinatorial cone stacks with the 2-category of Artin fans \cite{cavalieri2020moduli}.
In particular, $\mathrm{DDR}_g^{\mathrm{\mathrm{trop}}}(\bfa,\bfb)$ is the tropicalization of $\mathrm{R}_{g, (\bfa, \bfb)}(\Gmtrop^2)$. The rank $1$ version of the above cartesian square already appeared in Remark \ref{rem:tropDR}.

\subsubsection{Representable subdivisions and virtual structure}
\label{subsub:repsub}
There is a natural morphism 
\[
r_K:\mathrm{R}^{K}_{g, (\bfa, \bfb)}(\Gmlog^2)\to \mathrm{R}^{K}_{g, (\bfa, \bfb)}(\Gmtrop^2),
\]
which by \cite[Proposition 1.5.2]{ranganathan2024logarithmic} we know to be strict and representable.

Let $\mathrm{R}^{K, \dagger}_{g, (\bfa, \bfb)}(\Gmtrop^2)\to\mathrm{R}^{K}_{g, (\bfa, \bfb)}(\Gmtrop^2)$ be a logarithmic modification, where the source is represented by a stack over schemes with log structure. The  fiber product of fs log stracks
\[
\begin{tikzcd}
\mathrm{R}^{K, \dagger}_{g, (\bfa, \bfb)}(\Gmlog^2) \arrow[r, "r_K'"] \arrow[d,swap] \arrow[rd, phantom, "\square"]
& \mathrm{R}^{K, \dagger}_{g, (\bfa, \bfb)}(\Gmtrop^2) \arrow[d,] \\
\mathrm{R}^{K}_{g, (\bfa, \bfb)}(\Gmlog^2) \arrow[r,"r_K"]
&  \mathrm{R}^{K}_{g, (\bfa, \bfb)}(\Gmtrop^2),
\end{tikzcd}
\]
is represented by a stack over schemes with log structure. We call any stack constructed this way a \textit{representable logarithmic modification} of $\mathrm{R}^{K}_{g, (\bfa, \bfb)}(\Gmlog^2)$.

\medskip

The morphism $r_K$ is endowed with a relative perfect obstruction theory (see \cite[Section 3.3.2]{ranganathan2024logarithmic}). This endows any representable logarithmic modification of $\mathrm{R}^{K}_{g, (\bfa, \bfb)}(\Gmlog^2)$ with a virtual class. All these virtual classes are compatible under pushforward. Furthermore, we will consider representable logarithmic modifications of source and target of $\epsilon_{K_1, K_2}$:
\[
\tilde{\epsilon}_{K_1, K_2}\colon\mathrm{R}^{K_1, \dagger}_{g, (\bfa, \bfb)}(\Gmlog^2)\longrightarrow \mathrm{R}^{K_2, \dagger}_{g, (\bfa, \bfb)}(\Gmlog^2),
\]
such that $\tilde{\epsilon}_{K_1, K_2}$ is flat. In this case, we have the following compatibility of virtual classes (See \cite[Theorem 3.3.2]{ranganathan2024logarithmic}):
\begin{equation}
\label{eq:virtcomp}
(\tilde{\epsilon}_{K_1, K_2})^*([\mathrm{R}^{K_2, \dagger}_{g, (\bfa, \bfb)}(\Gmlog^2)]^{\mathrm{vir}})=[\mathrm{R}^{K_1, \dagger}_{g, (\bfa, \bfb)}(\Gmlog^2)]^{\mathrm{vir}}.
\end{equation}

\begin{rem}
In \cite[Proposition 1.5.5]{ranganathan2024logarithmic}, the authors prove that $\overline{\mathcal{M}}_{g, (\bfa, \bfb)}(X)$ is a representable logarithmic modification of $\overline{\mathcal{M}}_{g, (\bfa, \bfb)}(\Gmlog^2)$. 
\end{rem}

\subsubsection{Correlated refinement}
 Let $r$ be a positive integer dividing both $\bfa$ and $\bfb$. In Section \ref{subsub:corrhdr}, we defined open and closed components of $\mathrm{R}_{g, (\bfa, \bfb)}(\Gmlog^2)$ (the DDR-locus) indexed by $r$-th roots of unity. Consider the following composition:
 \[
\epsilon'\colon \overline{\mathcal{M}}_{g, (\bfa, \bfb)}(X) \to \overline{\mathcal{M}}_{g, (\bfa, \bfb)}(\Gmlog^2) \to \mathrm{R}_{g, (\bfa, \bfb)}(\Gmlog^2).
 \]
We can then define open and closed components of $\overline{\mathcal M}_{g, (\bfa, \bfb)}(X)$ as follows.

\begin{defi}
\label{def:corrloggw}
 Let $\theta$ be a $r$-th root of unity. We define
 \[
 \overline{\mathcal M}^{\theta}_{g, (\bfa, \bfb)}(X):=\epsilon'^{-1}(\mathrm{R}^{\theta}_{g, (\bfa, \bfb)}(\Gmlog^2)).
 \]
\end{defi}

Note that the same definition gives us correlated refinements of any representable logarithmic modification $\mathrm{R}^{K, \dagger}_{g, (\bfa, \bfb)}(\Gmlog^2)$ of $\mathrm{R}^{K}_{g, (\bfa, \bfb)}(\Gmlog^2)$, which we denote by $\mathrm{R}^{K, \dagger, \theta}_{g, (\bfa, \bfb)}(\Gmlog^2)$ for a fixed correlator $\theta$.

Furthermore, by restriction any representable subdivision $\mathrm{R}^{K, \dagger, \theta}_{g, (\bfa, \bfb)}(\Gmlog^2)$ carries a virtual class. Just as in the uncorrelated case these virtual classes on different representable subdivisions are related by proper pushforward. Similarly, the compatibility of virtual classes in \eqref{eq:virtcomp} also holds for the correlated components:
\begin{equation}
\label{eq:virtcomp2}
\tilde{\epsilon}_{K_1, K_2, \theta}^*([\mathrm{R}^{K_2, \dagger, \theta}_{g, (\bfa, \bfb)}(\Gmlog^2)]^{\mathrm{vir}})=[\mathrm{R}^{K_1, \dagger, \theta}_{g, (\bfa, \bfb)}(\Gmlog^2)]^{\mathrm{vir}}.
\end{equation}
Here $\tilde{\epsilon}_{K_1, K_2, \theta}$ denotes the restriction of $\tilde{\epsilon}_{K_1, K_2}$ to the correlated component.

\subsection{Evaluation Spaces}

\subsubsection{Rigid evaluation spaces}
From now on we assume that all $v_i=(a_i, b_i)$
are contained in rays of the fan $\Sigma;$ we also assume that for some $m\geq 0$ all $v_i$ for $i=1,..., m$ are the zero vector and the remaining $n-m$ vectors are nonzero. This can be achieved subdviding the fan $\Sigma$ and permuting the markings.

If $v_i=(a_i, b_i)\neq 0$, it determines a unique toric divisor $D_i$ of $X$ for $i=m+1, ..., n$. Of course we can have $D_i= D_j$ for $i\neq j.$

\begin{defi}
    The rigid evaluation space is defined as follows:
    \[
    \mathrm{Ev}_{\bfa,\bfb}(X):= X^m\times D_{m+1} \times\dots\times D_n.
    \]
\end{defi}
The moduli space $\overline{\mathcal{M}}_{g, (\bfa, \bfb)}(X)$ comes with an evaluation map
\[
\mathrm{ev}:\overline{\mathcal{M}}_{g, (\bfa, \bfb)}(X)\to \mathrm{Ev}_{\bfa,\bfb}(X).
\]
We consider the lattice $L:= (\mathbb{Z}^2)^{\oplus m}\oplus \bigoplus_{i=m+1}^n \mathbb{Z}^2 / \mathbb{Z} u_{i}$, where $u_i$ is the primitive integral vectors parallel to $v_i$ for $i=m+1, ..., n$. We can then descend the evaluation map to the moduli space of stable log maps to $\Gmlog^2$ (See \cite[Section 2.2]{ranganathan2024logarithmic}):
\[
\begin{tikzcd}
\overline{\M}_{g, (\bfa, \bfb)}(X) \arrow[r, "\mathrm{ev}"] \arrow[d,swap] 
& \mathrm{Ev}_{\bfa,\bfb}(X) \arrow[d,] \\
\overline{\M}_{g, (\bfa, \bfb)}(\Gmlog^2) \arrow[r,"\ev"]
&  L\otimes \Gmlog.
\end{tikzcd}
\]

\subsubsection{Correlated logarithmic Gromov--Witten cycles}\label{sec:correlatedlogGW}

We use the same notation as before for the restriction of $\mathrm{ev}$ to a correlated component  $\overline{\mathcal{M}}^{\theta}_{g, (\bfa, \bfb)}(X)$. Similarly we consider the forgetful map 
\[
\pi\colon \overline{\mathcal{M}}^{\theta}_{g, (\bfa, \bfb)}(X)\to \overline{\mathcal{M}}_{g,n}.
\]

\begin{defi}
\label{def:corGWcycle}
Let $\underline{\gamma}\in \mathrm{CH}^*(\mathrm{Ev}_{\bfa,\bfb}(X))$ and $\alpha\in \mathrm{CH}^*(\overline{\mathcal{M}}_{g,n})$. Fix a correlator $\theta\in\mu_r$. We define the associated correlated logarithmic Gromov-Witten cycle of $X$ as follows:
\[
\langle \alpha ; \underline{\gamma} \rangle_{g,(\bfa, \bfb)}^{\theta}:= \pi_{*}([\overline{\mathcal{M}}^{\theta}_{g, (\bfa, \bfb)}(X)]^{\mathrm{vir}}\cap( \pi^{*}\alpha\cup\ev^*\underline{\gamma})).
\]
\end{defi}
The recipe in \cite[Section 3.2]{ranganathan2024logarithmic} tells us how to lift this cycle class to $\mathrm{logCH}(\overline{\M}_{g,n})$. We denote this lift by the same notation.

\subsubsection{Rubber evaluation spaces}
We recall that $\Gm^2$ acts on the evaluation space $\mathrm{Ev}_{\bfa,\bfb}(X)$ via the following diagonal map on cocharacter lattices:
\[
N:=\mathbb{Z}^2\xrightarrow{ \ \phi \ } L= (\mathbb{Z}^2)^{\oplus m}\oplus \bigoplus_{i=m+1}^n \mathbb{Z}^2 / \mathbb{Z} u_{i},
\]
where each component is either the identity or the canonical projection.

\medskip

\paragraph{\bf{Assumption}}
For the remainder of the paper, we assume $\phi$ is injective. 
Indeed, setting $K:=\ker(\phi\otimes\Gmlog)$, the evaluation map factors as follows:

\[
 \begin{tikzcd}
 \overline{\M}_{g, (\bfa, \bfb)}(X)\arrow[r, "\ev"] \arrow[rd, "\epsilon_K"] & \mathrm{Ev}_{\bfa,\bfb}(X)\\    
 & \mathrm{R}^{K, \dagger}_{g, (\bfa, \bfb)}(\Gmlog^2) \arrow[u],
 \end{tikzcd}
\]
for some representable log modification $\mathrm{R}^{K,\dagger}_{g, (\bfa, \bfb)}(\Gmlog^2)$. 

If $\phi$ is not injective, then $K$ contains a logarithmic torus. We deduce \[(\epsilon_K)_{*}([\overline{\mathcal{M}}_{g, (\bfa, \bfb)}(X)]^{\mathrm{vir}})=0.\]
In this case all correlated logarithmic Gromov-Witten cycles are therefore zero. Under the injectivity assumption, $K$ is thus a finite group isomorphic to the torsion of $\mathrm{coker}(\phi)$. It is non-trivial when the primitive vectors of the ray do not span $\ZZ^2$.
\begin{remark}
\label{rem:torsion}
In \cite{ranganathan2024logarithmic}, the authors additionally assume $\mathrm{coker}(\phi)$ is torsion-free, noting that torsion does not affect the formulas. However, Proposition \ref{prop:rigidrubberform} shows that a global factor must be included in the presence of torsion.
\end{remark}
The quotient homomorphism $\psi\colon L \longrightarrow L / \phi(N)$ induces a morphism
\[
\psi\otimes \Gmlog: \mathrm{Ev}_{\bfa,\bfb}(X)\longrightarrow  (L / \phi(N) )\otimes \Gmlog.
\]
\begin{defi}\cite[Definition 2.4.2]{ranganathan2024logarithmic}
\label{def:rubev}
We call \emph{rubber evaluation space} $\mathrm{Ev}^{\mathrm{rub}}_{\bfa,\bfb}(X)$ a representable smooth subdivision of $(L / \phi(N) )\otimes \Gmlog$ (i.e. any smooth proper toric variety compactifying $(L / \phi(N) )\otimes \Gm$)  such that there exists a smooth subdivision $\mathrm{Ev}^{\dagger}_{\bfa,\bfb}(X)$ of $\mathrm{Ev}_{\bfa,\bfb}(X)$ with a weakly semistable morphism $q\colon \mathrm{Ev}^{\dagger}_{\bfa,\bfb}(X)\to \mathrm{Ev}^{\mathrm{rub}}_{\bfa,\bfb}(X)$ such that the following diagram commutes:
\begin{equation*}
\begin{tikzcd}
\mathrm{Ev}^{\dagger}_{\bfa,\bfb}(X) \arrow[r, ] \arrow[d,"q"] 
& \mathrm{Ev}_{\bfa,\bfb}(X) \arrow[d,] \\
\mathrm{Ev}^{\mathrm{rub}}_{\bfa,\bfb}(X) \arrow[r,]
&  (L / \phi(N) )\otimes \Gmlog.
\end{tikzcd}
\end{equation*}
\end{defi}

\begin{rem}
The following construction from \cite[Remark 2.4.3]{ranganathan2024logarithmic} shows that rubber evaluation spaces always exist: Start with
any representable subdivision $Q$ of $(L / \phi(N) )\otimes \Gmlog$. We then consider $B := Q \times_{(L / \phi(N) )\otimes \Gmlog} \mathrm{Ev}_{\bfa, \bfb}(X)$ with its
morphism $b: B \to Q$. Applying semistable reduction, we can assume $b$ is weakly semistable with smooth source and target.
\end{rem}

For the remainder of the section, we fix a choice of rubber evaluation space $\mathrm{Ev}^{\mathrm{rub}}_{\bfa,\bfb}(X)$ with a choice of $q\colon \mathrm{Ev}^{\dagger}_{\bfa,\bfb}(X)\to \mathrm{Ev}^{\mathrm{rub}}_{\bfa,\bfb}(X)$ as in Definition \ref{def:rubev}.

\begin{defi}
For any $\underline{\gamma}\in \mathrm{CH}^*(\mathrm{Ev}_{\bfa,\bfb}(X))$, we define the associated rubber insertion $\gamma_{\mathrm{rub}}\in \mathrm{CH}^*(\mathrm{Ev}^{\mathrm{rub}}_{\bfa,\bfb}(X))$ by $\gamma_{\mathrm{rub}}:=q_*(\underline{\tilde{\gamma}})$, where $\underline{\tilde{\gamma}}$ is the pullback of $\underline{\gamma}$ to $\mathrm{Ev}^{\dagger}_{\bfa,\bfb}(X)$.
\end{defi}

In particular, we have that
$$\deg\gamma_\mathrm{rub} = \deg\underline{\gamma}-2.$$

\subsubsection{Rubber evaluation maps}
\label{subsub:rubevm}
In \cite[Section 2.4]{ranganathan2024logarithmic}, the authors construct a rubber evaluation map:
\[
\ev_{\mathrm{rub}}: \mathrm{R}_{g, (\bfa,\bfb)}(\Gmlog^2)\longrightarrow (L / \phi(N) )\otimes \Gmlog.
\]
This map induces a map at the level of $\Gmtrop$:
\[
\mathrm{ev}'_{\mathrm{rub}}: \mathrm{R}_{g, (\bfa,\bfb)}(\Gmtrop^2)\longrightarrow (L / \phi(N) )\otimes \Gmtrop.
\]
By the definition, a morphism to $\Gmtrop$ is equivalent to a section of $\bar{M}^{\gp}_{\mathrm{R}_{g, (\bfa,\bfb)}(\Gmtrop^2)}$, which in turn is
 equivalent to a strict PL-map:
\[
\mathrm{ev}^{\mathrm{trop}}_{\mathrm{rub}}: \mathrm{DDR}^{\mathrm{\mathrm{trop}}}_g(\bfa,\bfb)\longrightarrow (L / \phi(N) )\otimes \mathbb{R}=\left( (\mathbb{R}^2)^{\oplus m}\oplus \bigoplus_{i=m+1}^n \mathbb{R}^2 / \mathbb{R} u_{i}\right) / \phi(\mathbb{R}^2).
\]

The map $\mathrm{ev}^{\mathrm{trop}}_{\mathrm{rub}}$ can be described explicitly. Let $p\in \mathrm{DDR}_{g}^{\mathrm{trop}}(\bfa, \bfb)$. Recall that $p$ corresponds to an equivalence class (with respect to the translation action) of PL maps. We choose a representative $f_p:\Gamma\to \mathbb{R}^2$ of this equivalence class, for example by fixing the image of one of the vertices of $\Gamma$ to be the origin in $\mathbb R^2.$

\medskip

For every marking $i$ with $i=1, ..., m$, (these are the marking carrying trivial logarithmic structure, i.e. those where the contact order vector $v_i$ is trivial) we have that $f_p$ contracts the unbounded leg corresponding to $i$ to a point in $\mathbb{R}^2$. Similarly for every marking $i=m+1, ..., n$, under the composition
\[
\Gamma\longrightarrow\mathbb{R}^2\longrightarrow \mathbb{R}^2 / \mathbb{R} u_i,
\]
the unbounded leg corresponding to $i$ is contracted to a point in $\mathbb{R}^2 / \mathbb{R} u_i$. Put together, we have associated to $f_p$ a point in $L\otimes\mathbb{R}$. Quotienting by the translation action translations, we obtain a map  
\[
\mathrm{DDR}^{\mathrm{\mathrm{trop}}}_g(\bfa,\bfb)\longrightarrow \left( (\mathbb{R}^2)^{\oplus m}\oplus \bigoplus_{i=m+1}^n \mathbb{R}^2 / \mathbb{R} u_{i}\right)/\phi(\mathbb{R}^2).
\]

This is in fact an explicit description of the map $\mathrm{ev}^{\mathrm{trop}}_{\mathrm{rub}}$. Our fixed choice of rubber evaluation space induces by fs fiber product a rubber evaluation map:
\[
\mathrm{ev}_{\mathrm{rub}}: \mathrm{R}^{\dagger}_{g, (\bfa,\bfb)}(\Gmlog^2)\longrightarrow \mathrm{Ev}^{\mathrm{rub}}_{\bfa,\bfb}(X).
\]

\subsection{Rigid to Rubber}


\begin{prop}\cite[Proposition 2.4.5]{ranganathan2024logarithmic}
\label{prop:cartdiag}
Let $K=\ker(\phi\otimes\Gmlog)$. There exists a representable subdivision $\mathrm{R}^{K, \dagger}_{g, (\bfa, \bfb)}(\Gmlog^2)$ such that the following diagram is cartesian:
\begin{equation*}
\begin{tikzcd}
\mathrm{R}^{K, \dagger}_{g, (\bfa, \bfb)}(\Gmlog^2) \arrow[r, "\ev"] \arrow[d,"\tilde{\epsilon}"] \arrow[rd, phantom, "\square"]
& \mathrm{Ev}^{\dagger}_{\bfa,\bfb}(X) \arrow[d,"q"] \\
\mathrm{R}^{\dagger}_{g, (\bfa,\bfb)}(\Gmlog^2) \arrow[r, "\ev_{\mathrm{rub}}"]
&  \mathrm{Ev}^{\mathrm{rub}}_{\bfa,\bfb}(X),
\end{tikzcd}
\end{equation*}
where $\tilde{\epsilon}$ is a representable subdivision of $\epsilon$ and $\mathrm{ev}$ is a representable subdivision of 
\[
\mathrm{R}^{K}_{g, (\bfa, \bfb)}(\Gmlog^2)\to L\otimes \Gmlog.
\]
\end{prop}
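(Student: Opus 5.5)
The plan is to build the square at the tropical level and then pull it back to logarithmic stacks. First, construct the tropical analogue of the diagram, with $\mathrm{Ev}^{\mathrm{rub}}_{\bfa,\bfb}(X)$ and $\mathrm{Ev}^\dagger_{\bfa,\bfb}(X)$ replaced by their fans. Then define $\mathrm{R}^{K,\dagger}_{g,(\bfa,\bfb)}(\Gmlog^2)$ as the fs fiber product $\mathrm{R}^{\dagger}_{g,(\bfa,\bfb)}(\Gmlog^2)\times^{fs}_{\mathrm{Ev}^{\mathrm{rub}}}\mathrm{Ev}^\dagger_{\bfa,\bfb}(X)$. Finally, check that this fiber product is indeed a representable subdivision of $\mathrm{R}^K_{g,(\bfa,\bfb)}(\Gmlog^2)$ and that the two induced maps are the claimed subdivisions of $\epsilon$ and of $\mathrm{R}^K\to L\otimes\Gmlog$.

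\textbf{Step 1: the square over $\Gmlog$.} Before any subdivision, consider the square
\[\mathrm{R}^K_{g,(\bfa,\bfb)}(\Gmlog^2)\to L\otimes\Gmlog,\qquad \mathrm{R}_{g,(\bfa,\bfb)}(\Gmlog^2)\to (L/\phi(N))\otimes\Gmlog.\]
Show it is cartesian as a diagram of fibered categories over $\mathrm{LogSch}^{\mathrm{fs}}$. A $T$-point of the fiber product consists of a rubber map $(C,[\widetilde\alpha,\widetilde\beta])$, defined modulo $\Gmlog^2$, together with a point $x\in L\otimes\Gmlog(T)$ whose image in $(L/\phi(N))\otimes\Gmlog$ equals $\ev_{\mathrm{rub}}$ of the map.

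\begin{itemize}
\item Because $\phi$ is injective, the sequence $0\to N\otimes\Gmlog\to L\otimes\Gmlog\to(L/\phi(N))\otimes\Gmlog$ has kernel exactly $K$ on the left; equivalently, $N\otimes \Gmlog/K$ acts freely on $L\otimes\Gmlog$ over the quotient.
\item The lifts $x$ of the rubber evaluation therefore form a torsor under $\Gmlog^2/K$.
\item Choosing a lift is the same as choosing a representative of $(\widetilde\alpha,\widetilde\beta)$ modulo $K$. This produces the inverse functor, and hence the comparison with the $K$-rubber space.
\end{itemize}

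This step uses the $(\Gmlog^2/K)$-torsor structure of $\epsilon_{K,\Gmlog^2}$ recalled earlier.

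\textbf{Step 2: the square after subdivision.} Base change the cartesian square of Step 1 along the subdivision $\mathrm{Ev}^\dagger_{\bfa,\bfb}(X)\to\mathrm{Ev}_{\bfa,\bfb}(X)\to L\otimes\Gmlog$ on the right, and along $\mathrm{R}^\dagger\to \mathrm{R}$ on the left, where $\mathrm{R}^\dagger$ is the fs fiber product defining $\mathrm{ev}_{\mathrm{rub}}$. By pasting of cartesian squares, the fs fiber product of $\mathrm{R}^\dagger_{g,(\bfa,\bfb)}(\Gmlog^2)$ and $\mathrm{Ev}^\dagger_{\bfa,\bfb}(X)$ over $\mathrm{Ev}^{\mathrm{rub}}_{\bfa,\bfb}(X)$ equals the fs fiber product of $\mathrm{R}^K_{g,(\bfa,\bfb)}(\Gmlog^2)$ with the subdivision of $\mathrm{Ev}_{\bfa,\bfb}(X)$ pulled back from $\mathrm{Ev}^\dagger\times \mathrm{R}^\dagger$. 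Tropically, this fiber product is a subdivision of $\mathrm{R}^K_{g,(\bfa,\bfb)}(\Gmtrop^2)$, and it is then pulled back along the strict map $r_K$. It is therefore a representable logarithmic modification in the sense of Section~\ref{subsub:repsub}. Representability uses that $\mathrm{Ev}^\dagger$ and $\mathrm{R}^\dagger$ are algebraic and that $q$ is a morphism of log schemes.

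\textbf{Main obstacle.} The hard point is the identification of the fs fiber product with a subdivision. In general, fs fiber products can change the underlying stack, through saturation and non-integral fibers, and can therefore introduce extra components. I will handle this using the hypothesis that $q$ is weakly semistable (integral and saturated). That guarantees the fs fiber product agrees with the ordinary fiber product of underlying stacks, and that tropically the cones of the product are exactly the products over the rubber cones. The verification that the induced map $\tilde\epsilon$ is a subdivision of $\epsilon$, and $\mathrm{ev}$ a subdivision of $\mathrm{R}^K\to L\otimes\Gmlog$, then reduces to checking these cone identifications on the tropical side.
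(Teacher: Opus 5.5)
Your proposal is correct and follows essentially the argument of the cited \cite[Proposition 2.4.5]{ranganathan2024logarithmic}, which the paper invokes without reproducing, with $\Gmlog^2$-torsors replaced by $\Gmlog^2/K$-torsors: the unsubdivided square is cartesian because $\mathrm{R}^K\to\mathrm{R}$ and the fibers of $L\otimes\Gmlog\to(L/\phi(N))\otimes\Gmlog$ are both torsors under $\Gmlog^2/K$ and $\ev$ is equivariant, and pasting fs fiber products then gives $\mathrm{R}^{K,\dagger}\cong\mathrm{R}^K\times^{fs}_{L\otimes\Gmlog}\mathrm{Ev}^\dagger$. One correction of emphasis: identifying this with a log modification is automatic from the pasting, since log modifications are stable under fs base change, so weak semistability of $q$ is not needed there; its real role is to make the square cartesian on underlying stacks, hence $\tilde{\epsilon}$ flat, which is what the later use of \eqref{eq:virtcomp} requires.
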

Let $\underline{\gamma}\in \mathrm{CH}^*(\mathrm{Ev}_{\bfa,\bfb}(X))$. Since $ \mathrm{Ev}^{\mathrm{rub}}_{\bfa,\bfb}(X)$ is a smooth toric variety, the associated rubber insertion $\gamma_{\mathrm{rub}}$ has a piecewise polynomial lift which we denote by $\gamma^{\mathrm{trop}}_{\mathrm{rub}}$ (See Remark \ref{rem:toricpp}).

This piecewise polynomial is homogeneous of degree $\deg\gamma_\mathrm{rub}=\deg\underline{\gamma}-2$.

In the next proposition we transfer the calculation of our correlated logarithmic GW invariant to an intersection theory problem on the moduli space of curves. To do so we will make use of extensions of $\gamma^{\mathrm{trop}}_{\mathrm{rub}}\circ \ev^{\mathrm{trop}}_{\mathrm{rub}}$ to $\mathcal{M}^{\mathrm{trop}}_{g,n}$ which do exist by \cite{ranganathan2024logarithmic}[Remark 2.4.3].
\begin{prop}
\label{prop:rigidrubberform}
Let $\alpha\in \mathrm{CH}^*(\overline{\M}_{g,n})$, $\underline{\gamma}\in\mathrm{CH}^*(\mathrm{Ev}_{\bfa,\bfb}(X))$ and $\theta\in\mu_r$ a correlator for $r$ the g.c.d. of $\bfa$ and $\bfb$. Let $\phi_{\bfa,\bfb}$ be an extension of the piecewise polynomial $\gamma^{\mathrm{trop}}_{\mathrm{rub}}\circ \ev^{\mathrm{trop}}_{\mathrm{rub}}$ to $\mathcal{M}^{\mathrm{trop}}_{g,n}$ 
 and $K=\ker(\phi\otimes\Gmlog)$. We have the following formula:
\[
\langle \alpha ; \underline{\gamma} \rangle_{g,(\bfa, \bfb)}^{\theta}=|K|\cdot\mathrm{DDR}^{\theta}_{g}(\bfa,\bfb)\cap  \phi_{\bfa,\bfb}.
\]
\end{prop}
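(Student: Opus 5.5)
The plan is to follow the rigid-to-rubber argument of \cite[Section~3]{ranganathan2024logarithmic}, restricted to the $\theta$-component throughout; the only new input is keeping track of the finite group $K$. First, by Definition~\ref{def:corrloggw} and the Remark following Subsection~\ref{subsub:repsub}, $\overline{\M}_{g,(\bfa,\bfb)}(X)$ is a representable logarithmic modification of $\overline{\M}_{g,(\bfa,\bfb)}(\Gmlog^2)$, and the correlated components on all these spaces are defined by pulling back the decomposition of $\mathrm{R}_{g,(\bfa,\bfb)}(\Gmlog^2)$. Since virtual classes on representable modifications are compatible under proper pushforward, and this compatibility persists after restricting to open and closed substacks, I can compute $\langle\alpha;\underline\gamma\rangle^\theta_{g,(\bfa,\bfb)}$ on the model $\mathrm{R}^{K,\dagger,\theta}_{g,(\bfa,\bfb)}(\Gmlog^2)$ of Proposition~\ref{prop:cartdiag}. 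To do this I will pull $\underline\gamma$ back to $\mathrm{Ev}^\dagger_{\bfa,\bfb}(X)$ and use that $\pi^*\alpha$ is pulled back from $\overline{\M}_{g,n}$.

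Next I restrict the cartesian square of Proposition~\ref{prop:cartdiag} to $\theta$-components. Because $\tilde\epsilon$ is compatible with the correlator, the restricted square is still cartesian. The map $q$ is weakly semistable, hence flat, so $\tilde\epsilon$ is flat as a base change. Equation~\eqref{eq:virtcomp2} then gives $[\mathrm{R}^{K,\dagger,\theta}]^{\mathrm{vir}}=\tilde\epsilon^*[\mathrm{R}^{\dagger,\theta}]^{\mathrm{vir}}$. By flat base change, $\tilde\epsilon_*\ev^*\tilde{\underline\gamma}=\ev_{\mathrm{rub}}^*q_*\tilde{\underline\gamma}=\ev_{\mathrm{rub}}^*\gamma_{\mathrm{rub}}$, provided I first factor $\tilde\epsilon$ appropriately. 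That factorization is the real content of the argument. The map $\epsilon_{1,K}$ is a $K$-torsor, and $\mathrm{R}^{K}\to\mathrm{R}$ is a $(\Gmlog^2/K)$-torsor, so the fibres of $\tilde\epsilon$ are a subdivision of $\Gmlog^2/K$. This must be matched with the fibres of $q$, which are a subdivision of the kernel torus $\phi(N)\otimes\Gmlog\cong\Gmlog^2/K$. The projection formula then yields
\[
\pi_*\bigl([\overline{\M}^\theta]^{\mathrm{vir}}\cap\ev^*\underline\gamma\cup\pi^*\alpha\bigr)=c\cdot\pi_*\bigl([\mathrm{R}^{\dagger,\theta}]^{\mathrm{vir}}\cap\ev_{\mathrm{rub}}^*\gamma_{\mathrm{rub}}\bigr)\cdot\alpha ,
\]
where $c$ is the degree discrepancy between the two fibrations.

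The main obstacle is pinning down $c=|K|$. My plan is to compute it on the open torus locus. Over a point of $\mathrm{R}$, the fibre of $\mathrm{R}^K$ is $\Gm^2/K$, and it maps to the corresponding fibre of $\mathrm{Ev}\to\mathrm{Ev}^{\mathrm{rub}}$, which is $\phi(N)\otimes\Gm$, via the map induced by $\phi$. That map is an isomorphism on $\Gm^2/K$. In contrast, $\overline{\M}(X)$ is built over $\mathrm{R}^{1}$, and $\mathrm{R}^1\to\mathrm{R}^K$ is a $K$-torsor, hence finite étale of degree $|K|$. Since the cartesian square involves $\mathrm{R}^K$ but the invariant is defined using the model of $\mathrm{R}^{1}$, pushing forward along $\epsilon_{1,K}$ multiplies the class by $|K|$; this is where the factor comes from. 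I expect the bulk of the care to go into verifying that $\epsilon_{1,K}$ restricted to $\theta$-components is still surjective of degree $|K|$. This should hold because $K$ acts on translations and does not change $\L_1,\L_2$, so the Weil pairing is $K$-invariant.

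Finally, I convert to a cap product on $\overline{\M}_{g,n}$. The class $\ev_{\mathrm{rub}}^*\gamma_{\mathrm{rub}}$ is the operational class of the piecewise polynomial $\gamma^{\mathrm{trop}}_{\mathrm{rub}}\circ\ev^{\mathrm{trop}}_{\mathrm{rub}}$ on the tropicalization $\mathrm{DDR}^{\mathrm{trop}}_g(\bfa,\bfb)$, by Remark~\ref{rem:toricpp} and functoriality of $\Phi$. Any extension $\phi_{\bfa,\bfb}$ to $\M^{\mathrm{trop}}_{g,n}$ restricts to this piecewise polynomial on the strict closed substack, so its image under $\Phi$ in $\mathrm{logCH}(\overline{\M}_{g,n})$ capped with the pushforward of $[\mathrm{R}^{\dagger,\theta}]^{\mathrm{vir}}$ gives the same answer, independently of the extension chosen. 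That pushforward is $\mathrm{logDDR}^\theta_g(\bfa,\bfb)$ by Example~\ref{ex:rublog} and Section~\ref{sec:higher-DR-log}. Combining these steps yields the stated formula.
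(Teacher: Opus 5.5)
Your proposal is correct and follows essentially the same route as the paper. The factor $|K|$ comes from pushing the virtual class forward along the $K$-torsor $\epsilon_{1,K}$ via \eqref{eq:virtcomp}. The cartesian square of Proposition~\ref{prop:cartdiag}, restricted to $\theta$-components with $q$ flat, turns $\ev^*\tilde{\underline\gamma}$ into $\ev_{\mathrm{rub}}^*\gamma_{\mathrm{rub}}$, and the tropical factorization of $\ev_{\mathrm{rub}}$ together with the projection formula (which also disposes of $\pi^*\alpha$) yields the cap product with $\phi_{\bfa,\bfb}$.
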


\begin{proof}
For any subdivision of $\epsilon_{1,K}$:
\[
\tilde{\epsilon}_{1,K}\colon \overline{\M}^{\dagger}_{g,\bfa,\bfb}(\Gmlog^2)\to \mathrm{R}_{g,\bfa,\bfb}^{K,\dagger}(\Gmlog^2)
\]
is a $K$-torsor. We conclude by \eqref{eq:virtcomp}
\[
(\tilde{\epsilon}_{1,K})_{*}([\overline{\M}_{g,\bfa,\bfb}^{\dagger}(\Gmlog^2)]^{\mathrm{vir}})=|K|\cdot [\mathrm{R}_{g,\bfa,\bfb}^{K,\dagger}(\Gmlog^2)]^{\mathrm{vir}}.
\]
For the remainder of the proof assume $K$ is trivial. 
We leave the necessary modifications of the proof for $|K|>1$ to the reader. Consider
\[
\pi: R^{\dagger}_{g, (\bfa,\bfb)}(\Gmtrop^2)\longrightarrow \overline{\M}_{g,n}
\]
By replacing domain and codomain of $\pi$ with log modifications, we can assume this morphism is weakly semistable. We do this to ensure that we determine the lift to $\mathrm{logCH}(\overline{\M}_{g,n})$ of the cycle $ \langle \alpha;\underline{\gamma} \rangle_{g,(\bfa, \bfb)}^{\theta}$.  From Proposition \ref{prop:cartdiag}, and its obvious restriction to the correlated components 
we have the following diagram:
\begin{equation*}
\begin{tikzcd}
\overline{\M}^{\dagger, \theta}_{g, (\bfa, \bfb)}(\Gmlog^2) \arrow[r] \arrow[d,"\tilde{\epsilon_{\theta}}"] \arrow[rd, phantom, "\square"]
&  \overline{\M}^{\dagger}_{g, (\bfa, \bfb)}(\Gmlog^2) \arrow[r, "\ev"] \arrow[d,"\tilde{\epsilon}"] \arrow[rd, phantom, "\square"]
&  \mathrm{Ev}^{\dagger}_{\bfa,\bfb}(X) \arrow[d,"q"]\\
R^{\dagger,\theta}_{g, (\bfa,\bfb)}(\Gmlog^2) \arrow[rdd, swap,"\pi_{\theta}"] \arrow[r]
& R^{\dagger}_{g, (\bfa,\bfb)}(\Gmlog^2) \arrow[d, "k"] \arrow[r, "\ev_{\mathrm{rub}}"]
&  \mathrm{Ev}^{\mathrm{rub}}_{\bfa,\bfb}(X).\\
&  R^{\dagger}_{g, (\bfa,\bfb)}(\Gmtrop^2) \arrow[d, "\pi"]\\
&  \overline{\M}_{g,n}^{\dagger}
\end{tikzcd}
\end{equation*}

Let $\overline{\M}^{\dagger}_{g, (\bfa, \bfb)}(X)$ be a representable logarithmic modification of $\overline{\M}^{\dagger}_{g, (\bfa, \bfb)}(\Gmlog^2)$ with a proper map $p\colon \overline{\M}^{\dagger}_{g, (\bfa, \bfb)}(X)\longrightarrow \overline{\M}^{\dagger}_{g, (\bfa, \bfb)}(\Gmlog^2)$ which identifies virtual classes under proper pushforward. We denote its restriction to a correlated component by $p_{\theta}$. Then, by the definition of correlated logarithmic Gromov-Witten cycle, we have:
\[
\langle \alpha ; \underline{\gamma} \rangle_{g,(\bfa, \bfb)}^{\theta}=(\pi_{\theta} \circ \tilde{\epsilon}_{\theta} \circ p_{\theta})_{*}([\overline{\M}^{\dagger, \theta}_{g, (\bfa, \bfb)}(X)]^{\mathrm{vir}} \cap (\ev_{\theta}\circ p_{\theta})^{*}(\tilde{\gamma})),
\]
where $\tilde{\gamma}$ is the pullback of $\underline{\gamma}$ to $\mathrm{Ev}^{\dagger}_{\bfa,\bfb}(X)$. We now proceed exactly the same as in the proof in \cite[Section 3.4]{ranganathan2024logarithmic}. By projection formula we deduce:
\[
(p_{\theta})_{*}([\overline{\M}^{\dagger, \theta}_{g, (\bfa, \bfb)}(X)]^{\mathrm{vir}} \cap (\ev_{\theta}\circ p_{\theta})^{*}(\tilde{\gamma}))=[\overline{\M}^{\dagger, \theta}_{g, (\bfa, \bfb)}(\Gmlog^2)]^{\mathrm{vir}} \cap \ev_{\theta}^{*}(\tilde{\gamma}).
\]
Then we apply compatibilty of pushforward and pullback in cartesian squares and equation \eqref{eq:virtcomp}:
\[
(\tilde{\epsilon}_{\theta})_{*}([\overline{\M}^{\dagger, \theta}_{g, (\bfa, \bfb)}(\Gmlog^2)]^{\mathrm{vir}} \cap\ev_{\theta}^{*}(\tilde{\gamma}))=[\mathrm{R}^{\dagger,\theta}_{g, (\bfa,\bfb)}(\Gmlog^2)]^{\mathrm{vir}}\cap \ev_{\theta,\mathrm{rub}}^{*}(\gamma_{\mathrm{rub}}).
\]
Let $\mathcal{A}_E$ denote the Artin fan of $\mathrm{Ev}^{\mathrm{rub}}_{\bfa,\bfb}(X)$. Recall from Section \ref{subsub:rubevm} that the rubber evaluation map factors as follows:
\begin{equation*}
\begin{tikzcd}
\mathrm{R}^{\dagger}_{g, (\bfa,\bfb)}(\Gmlog^2) \arrow[r, "\ev_{\mathrm{rub}}" ] \arrow[d,"\mathrm{Trop}"] 
& \mathrm{Ev}^{\mathrm{rub}}_{\bfa,\bfb}(X) \arrow[d,] \\
\mathrm{R}^{\dagger}_{g, (\bfa,\bfb)}(\Gmtrop^2) \arrow[r, "\ev'^{\dagger}"]
&  \mathcal{A}_E.
\end{tikzcd}
\end{equation*}
We then deduce
\[
\mathrm{Trop}^*(\gamma_{\mathrm{rub}}^{\mathrm{trop}}\circ\ev_{\mathrm{rub}}^{\mathrm{trop}})=\ev_{\mathrm{rub}}^*(\gamma_{\mathrm{rub}}).
\]
By projection formula and since $\mathrm{Trop}^*(\gamma_{\mathrm{rub}}^{\mathrm{trop}}\circ ev_{\mathrm{rub}}^{\mathrm{trop}})=\pi^*(\phi_{\bfa,\bfb})$ we conclude:
\begin{gather*}
(\pi_{\theta})_{*}([\mathrm{R}^{\dagger,\theta}_{g, (\bfa,\bfb)}(\Gmlog^2)]^{\mathrm{vir}}\cap \ev_{\theta,rub}^{*}(\gamma_{\mathrm{rub}}))=(\pi_{\theta})_{*}([\mathrm{R}^{\dagger,\theta}_{g, (\bfa,\bfb)}(\Gmlog^2)]^{\mathrm{vir}}\cap (\pi_{\theta})^{*}(\phi_{\bfa,\bfb}))=\\
\DDR_{g}^{\theta}(\bfa,\bfb)\cap \phi_{\bfa,\bfb}.
\end{gather*}
\end{proof}

\begin{theo}\label{thm:toriccorrelated}
Let $\alpha\in \mathrm{CH}^*(\overline{\M}_{g,n})$, $\underline{\gamma}\in\mathrm{CH}^*(\mathrm{Ev}_{\bfa,\bfb}(X))$ and $\theta\in\mu_r$ a correlator for $r$ the g.c.d. of $\bfa$ and $\bfb$. The associated correlated logarithmic Gromov-Witten invariants satisfy the following multiple cover formula:
\[
\langle \alpha ; \underline{\gamma} \rangle_{g,(\bfa, \bfb)}^{\theta}=\sum_{k|\bfa,\bfb,\theta} k^{3g-3+n-\deg\alpha}\langle \alpha ; \underline{\gamma} \rangle_{g,(\frac{\bfa}{k}, \frac{\bfb}{k})}^{\theta_{\mathrm{prim}}}.
\]
\end{theo}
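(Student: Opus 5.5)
The plan is to reduce the statement to the multiple cover formula for the correlated logDDR-cycle (Theorem~\ref{theo:MCF-FFR}) through Proposition~\ref{prop:rigidrubberform}, and then to track how the tropical insertion rescales.

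\textbf{Step 1: reduce to a DDR intersection.} By Proposition~\ref{prop:rigidrubberform},
\[
\langle \alpha ; \underline{\gamma} \rangle_{g,(\bfa, \bfb)}^{\theta}=|K|\cdot \mathrm{DDR}^{\theta}_g(\bfa,\bfb)\cap\phi_{\bfa,\bfb},
\]
with $\alpha$ capped in throughout. I will read this intersection in $\mathrm{logCH}(\Mgnbar)$: I use the log lift of the correlated class and apply $\Phi$ to the piecewise polynomial $\phi_{\bfa,\bfb}$. Two observations about the data:
\begin{itemize}
\item The group $K=\ker(\phi\otimes\Gmlog)$ depends only on the rays containing the vectors $v_i$. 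It is therefore the same for $(\bfa,\bfb)$ and $(\bfa/k,\bfb/k)$, and the factor $|K|$ is harmless.
\item I write $\bfa=r\tbfa$ and $\bfb=r\tbfb$, so that $\mathrm{DDR}(\bfa,\bfb)=\mathrm{DDR}(\L_1^r,\L_2^r)$ with $\L_1=\O(\tbfa)$ and $\L_2=\O(\tbfb)$.
\end{itemize}
The logarithmic form of Theorem~\ref{theo:MCF-FFR} (the Corollary following it) then gives
\[
\mathrm{logDDR}^\theta(\bfa,\bfb)=\sum_{k|r,\theta}k^{2g}\,\mathrm{logDDR}^{\theta_{\mathrm{prim}}}\Big(\frac{\bfa}{k},\frac{\bfb}{k}\Big).
\]

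\textbf{Step 2: compare the insertions.} The key geometric input is that dividing tangency profiles by $k$ rescales tropical maps. A point of $\mathrm{DDR}^{\mathrm{trop}}_g(\bfa/k,\bfb/k)$ is a balanced map $f\colon\Gamma\to\RR^2$ with slopes $(\bfa/k,\bfb/k)$. Multiplying by $k$ gives $kf$, with slopes $(\bfa,\bfb)$ on the same metric graph. This identifies the two cone stacks over $\M_{g,n}^{\mathrm{trop}}$, and under the identification the map $\ev^{\mathrm{trop}}_{\mathrm{rub}}$ is multiplied by $k$, since it is linear in $f$ by its explicit description in Section~\ref{subsub:rubevm}. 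The rubber evaluation space and the insertion $\gamma^{\mathrm{trop}}_{\mathrm{rub}}$ are the same for both profiles, and $\gamma^{\mathrm{trop}}_{\mathrm{rub}}$ is homogeneous of degree $\deg\underline{\gamma}-2$. Hence on the support of $\mathrm{DDR}^{\mathrm{trop}}_g(\bfa/k,\bfb/k)$,
\[
\phi_{\bfa,\bfb}=k^{\deg\underline\gamma-2}\,\phi_{\bfa/k,\bfb/k}.
\]

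\textbf{Step 3: this is the main obstacle.} The difficulty is justifying that capping $\mathrm{logDDR}^{\theta_{\mathrm{prim}}}(\bfa/k,\bfb/k)$ with $\phi_{\bfa,\bfb}$ agrees with capping it with $k^{\deg\underline\gamma-2}\phi_{\bfa/k,\bfb/k}$, even though the two extensions to $\M_{g,n}^{\mathrm{trop}}$ differ away from that support. My approach:
\begin{itemize}
\item Use the representative of Section~\ref{sec-MCF-DDR}, the pushforward from $\mathrm{logDDRL}(\L_1^{r/k},\L_2^{r/k})$, whose tropicalization lies in $\mathrm{DDR}^{\mathrm{trop}}_g(\bfa/k,\bfb/k)$.
\item By the projection formula, $\mathrm{logDDR}\cap\Phi(\psi)$ depends only on the pullback of the piecewise polynomial $\psi$ to that locus. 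This is the same independence of extension already used in \cite{ranganathan2024logarithmic}[Remark 2.4.3].
\item Check that the correlated components are open and closed in this locus, so the argument passes to them unchanged.
\end{itemize}

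\textbf{Step 4: count degrees.} For the invariant to be nonzero, degrees must match. The DDR class has dimension $3g-3+n-2g=g-3+n$, so $\deg\alpha+\deg\underline\gamma-2=g-3+n$, that is, $\deg\underline\gamma-2=g-3+n-\deg\alpha$. The $k$-th summand therefore carries the factor
\[
k^{2g}\cdot k^{g-3+n-\deg\alpha}=k^{3g-3+n-\deg\alpha}.
\]
Applying Proposition~\ref{prop:rigidrubberform} again, with the same $|K|$, to the profile $(\bfa/k,\bfb/k)$ gives exactly the stated formula. When the degrees do not match, both sides vanish.
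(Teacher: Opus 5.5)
Your proof is correct and follows essentially the same route as the paper. The paper also applies Proposition~\ref{prop:rigidrubberform} and the correlated logDDR multiple cover formula, uses the rescaling $f\mapsto kf$ to get $\ev^{\mathrm{trop}}_{\mathrm{rub},1}\circ i_k=k\cdot\ev^{\mathrm{trop}}_{\mathrm{rub},k}$, and finishes with the degree count $2g+\deg\underline{\gamma}-2=3g-3+n-\deg\alpha$. Two small points. First, the paper avoids your Step 3 by observing that $k^{-\deg\gamma_{\mathrm{rub}}}\phi_{\bfa,\bfb}$ is itself an admissible extension for the profile $(\bfa/k,\bfb/k)$, since Proposition~\ref{prop:rigidrubberform} holds for any extension. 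Second, $f\mapsto kf$ only embeds $\DDR^{\mathrm{trop}}_g(\bfa/k,\bfb/k)$ into $\DDR^{\mathrm{trop}}_g(\bfa,\bfb)$ rather than identifying them, because interior edge slopes need not be divisible by $k$; the embedding is all your argument actually uses.
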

\begin{proof}
Let $k|\bfa,\bfb$. We construct a morphism $i_k\colon \mathrm{DDR}_{g}^{\mathrm{trop}}(\frac{\bfa}{k},\frac{\bfb}{k})\hookrightarrow \mathrm{DDR}_{g}^{\mathrm{trop}}(\bfa,\bfb)$ as follows. Let $p=[f_p:\Gamma_p\to \mathbb{R}^2]\in \mathrm{DDR}_{g}^{\mathrm{trop}}(\frac{\bfa}{k},\frac{\bfb}{k})$.
The following composition:
\[
\Gamma_p\xrightarrow{f_p} \mathbb{R}^2\xrightarrow{k\cdot} \mathbb{R}^2, 
\]
we denote by $k\cdot f_p$. We then clearly have $[k\cdot f_p]\in\mathrm{DDR}_{g}^{\mathrm{trop}}(\bfa,\bfb)$. The morphism $i_k$ this defines commutes with the inclusions into $\mathcal{M}_{g,n}^{\mathrm{trop}}$. From the definitions of the tropical rubber evaluation maps we deduce:
\begin{equation*}
\begin{tikzcd}
\DDR_{g}^{\mathrm{trop}}(\frac{\bfa}{k}, \frac{\bfb}{k}) \arrow[r, "\ev_{rub,k}^{\mathrm{trop}}" ] \arrow[d, hook, "i_k"] 
& (L / \phi(N) )\otimes \mathbb{R} \arrow[d, "k\cdot"] \\
\DDR_{g}^{\mathrm{trop}}(\bfa, \bfb) \arrow[r, "\ev_{rub,1}^{\mathrm{trop}}"]
&  (L / \phi(N) )\otimes \mathbb{R}.
\end{tikzcd}
\end{equation*}
Let $\phi_{\bfa,\bfb}$ be an extension of the piecewise polynomial $\gamma^{\mathrm{trop}}_{\mathrm{rub}}\circ \ev^{\mathrm{trop}}_{\mathrm{rub},1}$ to $\mathcal{M}^{\mathrm{trop}}_{g,n}$. From the diagram we deduce $k^{-\deg\gamma_\mathrm{rub}}\phi_{\bfa,\bfb}$ is an extension of the piecewise polynomial $\gamma^{\mathrm{trop}}_{\mathrm{rub}}\circ \ev^{\mathrm{trop}}_{\mathrm{rub},k}$ to to $\mathcal{M}^{\mathrm{trop}}_{g,n}$. Using Theorem \ref{theo:MCF-FFR} and Proposition \ref{prop:rigidrubberform}, we get:
\begin{gather*}
\langle \alpha ; \underline{\gamma} \rangle_{g,(\bfa, \bfb)}^{\theta}=|K|\cdot\mathrm{DDR}^{\theta}_{g}(\bfa,\bfb)\cap \phi_{\bfa,\bfb}=\\
\sum_{k|\bfa,\bfb,\theta}k^{2g}|K|\DDR^{\theta_\mathrm{prim}}(\frac{\bfa}{k},\frac{\bfb}{k})\cap \phi_{\bfa,\bfb}=\sum_{k|\bfa,\bfb,\theta}k^{2g+\deg\gamma_\mathrm{rub}}\langle \alpha ; \underline{\gamma} \rangle_{g,(\frac{\bfa}{k}, \frac{\bfb}{k})}^{\theta_\mathrm{prim}}.
\end{gather*}
Finally, since the insertions match the rank of the virtual class, we have:
$$n+g-1 = \deg\alpha+\deg\underline{\gamma}.$$
Using that $\deg\gamma_\mathrm{rub}=\deg\underline{\gamma}-2$, we get that
$$2g+\deg\gamma_\mathrm{rub} = 3g-3+n-\deg\alpha.$$
\end{proof}

\begin{color}{red}

\end{color}

\bibliographystyle{alpha}
\bibliography{biblio} 

\end{document}